      \theoremstyle{plain}
     \newtheorem{thm}{Theorem}[section]
\newtheorem{lem}[thm]{Lemma}
\newtheorem{pro}[thm]{Proposition}
\newtheorem{rmk}[thm]{Remark}
\newcommand{\bess}{\begin{eqnarray*}}
\newcommand{\eess}{\end{eqnarray*}}
\begin{document}

%




\author{Weiyuan Qiu}
\address{School of Mathematical Sciences, Fudan University, Shanghai, 200433, P.R.China}
\email{wyqiu@fudan.edu.cn}

\author{Pascale Roesch}
\address{Institut de Mathématiques de Toulouse, 118 route de Narbonne, F-31062, Toulouse Cedex 9, France}
\email{roesch@math.univ-toulouse.fr}

\author{Xiaoguang Wang}
\address{Department of Mathematics, Zhejiang University, Hangzhou, 310027, P.R.China}
\email{wxg688@163.com}

 \author{Yongcheng Yin}
 \address{Department of Mathematics, Zhejiang University, Hangzhou, 310027, P.R.China}
 \email{yin@zju.edu.cn}






   \title[]{Hyperbolic components of McMullen maps}


   \begin{abstract}
 In this article, we study the hyperbolic components of McMullen maps. We show that
  the boundaries of all hyperbolic components are  Jordan curves. This settles a problem posed by  Devaney. As a consequence,
   we  show that cusps are dense on the boundary of the unbounded hyperbolic component. This is a dynamical analogue of
  McMullen's theorem that  cusps  are dense on the Bers' boundary of Teichm\"uller space. 
   \end{abstract}

   \subjclass[2010]{Primary 37F45; Secondary 37F10, 37F15}

   \keywords{parameter plane, McMullen map, hyperbolic component, Jordan curve}



   \date{\today}


   \maketitle




\section{Introduction}\label{int}

The rational maps on the Riemann sphere $\mathbb{\widehat{C}}=\mathbb{C}\cup\{0\}$
$$z\mapsto z^d+\lambda z^{-m}, \lambda\in \mathbb{C}^*=\mathbb{C}\setminus \{0\},\ d,m\geq1$$
regarded as a singular perturbation of the monomial  $z\mapsto z^d$, take a  simple form but exhibit very rich dynamical behavior. These maps are known as `McMullen maps', since McMullen \cite{Mc1}  first studied these maps and pointed out that when $(d,m)=(2,3)$ and $\lambda$ is small,
the Julia set is a Cantor set of circles.
This family attracts many people for several reasons.
 The notable one is probably  that the Julia set varies in several classic fractals. It can be homeomorphic to either a  Cantor set, or a  Cantor set of circles, or a Sierpinski carpet \cite{DLU}.
Another reason is that this family provides many examples for different purpose  to understand the dynamic of rational maps. We refer the reader to
\cite{D1,D2,D3,DK,DLU,DP,HP,WQY,R1,S}
 and the reference therein for a
number of related results.

The purpose of this article is to study the boundaries of the hyperbolic components of  the   McMullen maps:
$$f_{\lambda}:z\mapsto z^n+\lambda z^{-n}, \ \ \lambda\in \mathbb{C}^*,\ n\geq3.$$
For any
$\lambda\in\mathbb{C}^*$, the map $f_\lambda$ has a superattracting
fixed point at $\infty$. The immediate attracting basin of $\infty$ is denoted
by $B_\lambda$.
The critical set  of $f_\lambda$ is $\{0,\infty\} \cup C_\lambda$, where
$C_\lambda=\{c\in\mathbb{C}; c^{2n}=\lambda\}$. Besides
$\infty$, there are only two critical values:
$v_\lambda^+=2\sqrt{\lambda}$ and $v_\lambda^-=-2\sqrt{\lambda}$ (here, when restricted to the fundamental domain,
$v_\lambda^+$ and $v_\lambda^-$ are well-defined, see Section \ref{cut}).
In
fact, there is only one free critical orbit (up to a sign).

 Recall that a rational map is {\it hyperbolic} if all critical orbits are attracted by the attracting cycles, see \cite{M, Mc4}.
 A McMullen map $f_\lambda$ is hyperbolic if the free critical orbit is attracted either by $\infty$ or by an attracting cycle in $\mathbb{C}$.
  It is known (see Theorem \ref{1a}) that for the family $\{f_\lambda\}_{\lambda\in\mathbb{C}^*}$, every hyperbolic component is isomorphic to either the unit disk $\mathbb{D}$ or
 $\mathbb{D}^*=\mathbb{D}-\{0\}$. This family admits the `Yoccoz puzzle' structure (see  \cite{WQY}), which allows us to carry out further study of their dynamical
behavior and the boundaries of the hyperbolic components. The Yoccoz puzzle is induced by a kind of Jordan curve called `cut ray' which was first constructed by Devaney \cite{D3}.

The main result of the paper is:
\begin{thm}\label{main} The boundaries of all hyperbolic components are  Jordan curves.
\end{thm}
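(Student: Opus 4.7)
The plan is to treat separately the three natural types of hyperbolic components: (i) the unbounded (escape) component $H_\infty$, consisting of $\lambda$ for which the free critical orbit is attracted to $\infty$; (ii) the McMullen domain $\mathcal{H}_0$ around $\lambda=0$, consisting of $\lambda$ for which $f_\lambda(v_\lambda^{\pm})\in B_\lambda$ but $v_\lambda^{\pm}\notin B_\lambda$; and (iii) every remaining hyperbolic component $H$, on which $f_\lambda$ has an attracting cycle in $\mathbb{C}$. By Theorem~\ref{1a}, each carries a canonical uniformization $\Phi_H$ to $\mathbb{D}$ or $\mathbb{D}^*$: the B\"ottcher coordinate at $\infty$ evaluated at $v_\lambda^+$ for $H_\infty$, the same evaluated at $f_\lambda(v_\lambda^+)$ for $\mathcal{H}_0$, and the multiplier of the attracting cycle (or an analogous Koenigs/B\"ottcher coordinate at the super-attracting centre) for the remaining components. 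The goal is to extend each $\Phi_H^{-1}$ continuously and injectively to the closure of its model disk, so that $\partial H$ is the homeomorphic image of a circle.

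For the continuous extension I would introduce a parameter Yoccoz puzzle, built by transporting Devaney's cut rays \cite{D3} and the dynamical puzzle of \cite{WQY} from the dynamical plane into parameter space. For each admissible angle the cut rays in the dynamical planes of $f_\lambda$, $\lambda\in H$, move holomorphically and, together with equipotentials, cut out a nested family of parameter puzzle pieces around every prospective boundary point of $H$. The key analytic input is a shrinking lemma: via the holomorphic motion along $H$ and the Ma\~n\'e--Sad--Sullivan principle, shrinking of dynamical puzzle pieces around the critical value (already established in \cite{WQY}) forces shrinking of the corresponding parameter puzzle pieces. This produces a well-defined landing map $\theta\mapsto\lambda(\theta)$ from $\partial\mathbb{D}$ to $\partial H$, i.e.\ a continuous extension of $\Phi_H^{-1}$ to the unit circle.

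The final and most delicate step is injectivity of this boundary map, i.e.\ rigidity on $\partial H$. Two distinct parameters $\lambda_1,\lambda_2\in\partial H$ sharing all puzzle combinatorics would yield $f_{\lambda_1}$ and $f_{\lambda_2}$ that are combinatorially equivalent; a qc conjugacy on the attracting/escaping Fatou components can then be promoted, through a standard puzzle qc-rigidity argument (exploiting that up to sign there is only one free critical orbit), to a global quasi-conformal conjugacy with zero Beltrami differential, forcing $\lambda_1=\lambda_2$. The hard part will be this rigidity together with the uniform handling of parabolic, Misiurewicz, and other degenerate parameters on $\partial H$; and, in the two ``punctured disc'' cases $H_\infty$ and $\mathcal{H}_0$, verifying that the puncture of $\mathbb{D}^*$ corresponds only to the ideal parameter $\lambda=\infty$ (respectively $\lambda=0$) and not to any finite accumulation, so that $\partial H$ in $\mathbb{C}^*$ really is a single Jordan curve rather than two.
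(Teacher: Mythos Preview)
Your trichotomy has a genuine gap: it omits the Sierpi\'nski holes. The set of parameters for which the free critical orbit escapes to $\infty$ is \emph{not} a single component; besides the unbounded Cantor locus (the paper's $\mathcal{H}_0$) and the McMullen domain (the paper's $\mathcal{H}_2$), there are, for each $k\ge 3$, exactly $(2n)^{k-3}(n-1)$ bounded escape components of level $k$ on which $f_\lambda^{k-2}(v_\lambda^+)\in T_\lambda$. These are hyperbolic components, but they carry no attracting cycle in $\mathbb{C}$, so your case (iii) does not apply, and your proposed uniformization by a multiplier map makes no sense for them. The correct coordinate on a Sierpi\'nski hole is $\lambda\mapsto\psi_\lambda(f_\lambda^{k-2}(v_\lambda^+))$, and their boundary regularity requires a separate argument; in the paper it is obtained in Section~\ref{sier} from the already established regularity of $\partial\mathcal{H}_0$ together with holomorphic motion of $\partial B_\lambda$ and continuity of cut rays.

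Methodologically your sketch also diverges from the paper. You propose a parapuzzle: transplant dynamical puzzle pieces into parameter space and prove shrinking there. The paper explicitly avoids this. For $\partial\mathcal{H}_0$ it works directly with prime-end impressions: given two non-cusp parameters $\lambda_1,\lambda_2$ in the same impression $\mathcal{X}_t$, Lemma~\ref{3d} pins down the landing angle of $v_{\lambda_i}^+$ on $\partial B_{\lambda_i}$, and Proposition~\ref{3ff} builds (using cut rays as quasi-circles) a quasiconformal conjugacy that is holomorphic on the Fatou set; a zero-measure result for $J(f_{\lambda_1})$ (Theorem~\ref{lm}) then forces the conjugacy to be M\"obius, hence $\lambda_1=\lambda_2$. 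No shrinking of parameter pieces is invoked. For the renormalizable components the paper does not use puzzles at all: the multiplier map $\kappa:\mathcal{B}\to\mathbb{D}$ is shown to be a proper holomorphic covering by quasiconformal surgery, hence conformal, and the boundary extension follows from the implicit function theorem (analytic away from $\kappa^{-1}(1)$). Your parapuzzle route could in principle be pushed through for the unbounded locus, but for the renormalizable components it is substantial overkill, and for the Sierpi\'nski holes you have not said what the puzzle pieces or the landing combinatorics would be.
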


 Theorem \ref{main} affirmly answers a problem posed by
Devaney \cite{DK} at  the Snowbird Conference on the 25th Anniversary of the Mandelbrot set. In fact, Devaney has proven in \cite{D1} that the boundary of the
hyperbolic component containing the punctured neighborhood of the origin is a Jordan curve and he  asked whether all the other  hyperbolic components of {\it escape type} (the free critical orbit escapes to $\infty$) are Jordan domains. Our result confirms this.

\begin{figure}[!htpb]
  \setlength{\unitlength}{1mm}
  {\centering
  \includegraphics[width=60mm]{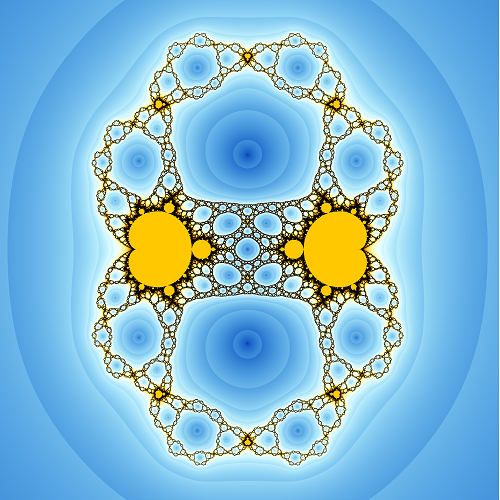}
  \includegraphics[width=60mm]{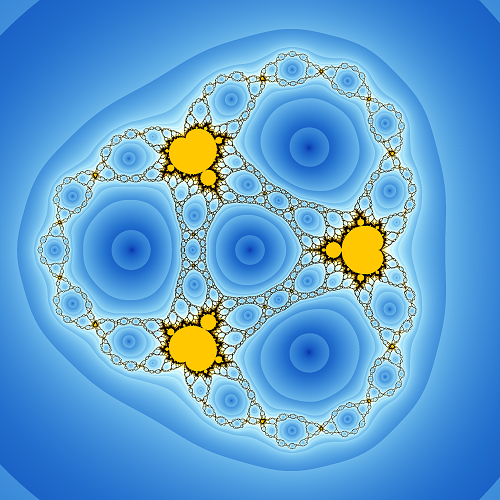}\put(-8,28){$\mathcal{H}_0$}\put(-47,30){$\mathcal{H}_3$}
  \put(-25,16){$\mathcal{H}_3$}\put(-32,28){$\mathcal{H}_2$}\put(-25,40){$\mathcal{H}_3$}
  \put(-95,29){$\mathcal{H}_2$}\put(-70,30){$\mathcal{H}_0$}\put(-94,16){$\mathcal{H}_3$}\put(-94,40){$\mathcal{H}_3$}}
\begin{minipage}{14cm}
  \caption{Parameter plane of McMullen maps, $n=3,4$.}
  \label{Fig_McMullen}
  \end{minipage}
\end{figure}



Our second result concerns  the  topological structure of the boundary of the unbounded hyperbolic component $\mathcal{H}_0$ (consisting
 of the parameters  for which the Julia set $J(f_\lambda)$ is a Cantor set, see Section \ref{escape}):


\begin{thm}\label{cusp} Cusps are dense in $\partial\mathcal{H}_0$.
\end{thm}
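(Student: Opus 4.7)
The plan is to use the Jordan curve structure of $\partial\mathcal{H}_0$ from Theorem~\ref{main} to parameterize the boundary by external angles, and then to identify the landing points of parameter rays at periodic angles as cusps.

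First, I would set up the conformal uniformization of $\mathcal{H}_0$. For $\lambda \in \mathcal{H}_0$, let $\phi_\lambda : B_\lambda \to \mathbb{D}^*$ denote the B\"ottcher coordinate at $\infty$, conjugating $f_\lambda$ on a neighborhood of $\infty$ to $z\mapsto z^n$. The map $\Psi(\lambda) := \phi_\lambda(v_\lambda^+)$ furnishes a conformal isomorphism $\Psi : \mathcal{H}_0 \to \mathbb{D}^*$ (cf.\ Theorem~\ref{1a}). Since $\partial\mathcal{H}_0$ is a Jordan curve by Theorem~\ref{main}, $\Psi$ extends to a homeomorphism of closures, and I obtain a continuous surjection $\theta \mapsto \lambda(\theta) := \bar\Psi^{-1}(e^{2\pi i\theta})$ from $\mathbb{R}/\mathbb{Z}$ onto $\partial\mathcal{H}_0$.

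Next, I would show that when $\theta$ is periodic under the multiplication map $m_n : \theta \mapsto n\theta \pmod 1$, the parameter $\lambda(\theta)$ carries a parabolic cycle, and hence is a cusp. Along the parameter ray $\Psi^{-1}(\{re^{2\pi i\theta} : 0 < r < 1\})$, the critical value $v_\lambda^+$ lies on the dynamical ray $R_\lambda(\theta) \subset B_\lambda$; as $\lambda \to \lambda(\theta)$ it approaches the landing point of $R_{\lambda(\theta)}(\theta)$, which is a periodic point whose period divides that of $\theta$ under $m_n$. A Douady--Hubbard-type perturbation argument, applied to the multiplier map $\lambda \mapsto$~(multiplier of the relevant cycle), forces this multiplier to equal $1$: if it were repelling, a small holomorphic motion of $\lambda(\theta)$ in either direction would preserve the escape of the critical orbit via the same combinatorial description, contradicting $\lambda(\theta) \in \partial\mathcal{H}_0$; and an attracting multiplier is excluded since $\lambda(\theta)\notin\mathcal{H}_0$. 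Density of the periodic angles in $\mathbb{R}/\mathbb{Z}$ together with continuity of $\theta \mapsto \lambda(\theta)$ then yields density of cusps in $\partial\mathcal{H}_0$.

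The main obstacle I anticipate is this second step: rigorously showing that dynamical rays of periodic angle continue to land at periodic points throughout $\mathcal{H}_0$, and, crucially, that the limiting cycle at $\lambda(\theta)$ is parabolic with multiplier exactly $1$. This requires a landing theorem for dynamical rays of periodic angle in $B_\lambda$ (provided by the Yoccoz puzzle / cut-ray structure established earlier in the paper), stability of the landing under quasiconformal deformation across $\mathcal{H}_0$, and a transversality/normal-family argument for the multiplier to rule out Siegel or Cremer alternatives as well as spurious cycles that might be spawned along boundary approaches.
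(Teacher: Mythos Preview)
Your overall strategy matches the paper's: parameterize $\partial\mathcal{H}_0$ via Theorem~\ref{main}, show that $\tau$-periodic angles give cusps, and invoke density of periodic angles. There are minor setup issues---the uniformization is $\Phi_0(\lambda)=\phi_\lambda(v_\lambda^+)^2$ onto $\mathbb{C}\setminus\overline{\mathbb{D}}$ (Theorem~\ref{1a}), not $\phi_\lambda(v_\lambda^+)$ onto $\mathbb{D}^*$; the square is why the relevant dynamical angle is $t/2$ rather than $t$---but these are cosmetic.

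The real gap is your argument that the limiting cycle must be parabolic. You claim that if it were repelling, ``a small holomorphic motion of $\lambda(\theta)$ in either direction would preserve the escape of the critical orbit,'' contradicting $\lambda(\theta)\in\partial\mathcal{H}_0$. But at such a parameter the critical value would sit on a repelling periodic point in the Julia set and hence does \emph{not} escape; nearby parameters can fall on either side of $\partial\mathcal{H}_0$, which is exactly the Misiurewicz picture and is fully compatible with $\lambda(\theta)\in\partial\mathcal{H}_0$. No contradiction comes from perturbation alone. The paper (Theorem~\ref{cuspc}) avoids multiplier analysis entirely. Lemma~\ref{3b} says each $\lambda\in\partial\mathcal{H}_0$ is either a cusp or has $C_\lambda\subset\partial B_\lambda$; in the latter case Lemma~\ref{3d} shows $R_{\nu(\theta)}(\theta/2)$ lands at $v_{\nu(\theta)}^+$, and periodicity of $\theta$ forces $f_{\nu(\theta)}^p(v_{\nu(\theta)}^+)=\pm v_{\nu(\theta)}^+$ for some $p\geq 1$. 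The decisive structural fact you are missing is that for McMullen maps every $f_\lambda$-preimage of $v_\lambda^\pm$ is a critical point (one solves $(z^n\mp\sqrt{\lambda})^2=0$). Hence the cycle through $v_{\nu(\theta)}^+$ contains a critical point and is therefore superattracting, contradicting $v_{\nu(\theta)}^+\in\partial B_{\nu(\theta)}\subset J(f_{\nu(\theta)})$. This one-line observation replaces your entire perturbation step and simultaneously disposes of the repelling, Siegel, and Cremer alternatives you worried about.
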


Here, according to McMullen \cite{Mc3}, a parameter $\lambda\in \mathbb{C}^*$ is called a {\it cusp} if the map $f_\lambda$ has a parabolic cycle on $\partial B_\lambda$.
Theorem \ref{cusp} is a dynamical analogue of McMullen's theorem that cusps are dense on the  Bers' boundary of Teichm\"uller space \cite{Mc2}.

 Let's sketch how to obtain Theorem \ref{cusp}.
Assuming Theorem \ref{main}, one gets a canonical parameterization $\nu:\mathbb{S}\rightarrow \partial \mathcal{H}_0$, where
$\nu(\theta)$ is defined to be the landing point of the parameter ray $\mathcal{R}_0(\theta)$  (see Section \ref{0Jordan}) in $\mathcal{H}_0$.
We actually give a complete characterization of $\partial \mathcal{H}_0$ and its cusps:

\begin{thm}[Characterization of $\partial \mathcal{H}_0$ and cusps]\label{cc}

\

1.  $\lambda\in \partial \mathcal{H}_0$ if and
only if $\partial B_\lambda$ contains either 
$C_\lambda$ or a parabolic cycle. 

2.  $\nu(\theta)$ is a cusp if and only if $n^p\theta\equiv\theta {\ \rm mod \ } \mathbb{Z}$ for some $p\geq1$.
\end{thm}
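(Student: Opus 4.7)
\textbf{Part 1 (structure of $\partial\mathcal{H}_0$).} I prove the two parts of Theorem~\ref{cc} in order; the angle characterization in Part 2 uses the topological description of $\partial\mathcal{H}_0$ from Part 1. For the "if" direction, set $\Psi(\mu):=\Phi_\mu(v_\mu)$, where $\Phi_\mu$ is the B\"ottcher coordinate conjugating $f_\mu$ to $z\mapsto z^n$ near $\infty$; then $\Psi$ uniformizes $\mathcal{H}_0$ onto $\mathbb{C}\setminus\overline{\mathbb{D}}$. If $C_\lambda\subset\partial B_\lambda$, $\Psi$ extends continuously to $\lambda$ with $|\Psi(\lambda)|=1$ and transversality lets nearby parameters with $|\Psi(\mu)|>1$ lie in $\mathcal{H}_0$. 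If $\partial B_\lambda$ carries a parabolic cycle, classical parabolic-implosion (Fatou-coordinate) theory produces nearby parameters along which the former parabolic basin merges with $B_\mu$ and the free critical orbit escapes to $\infty$, so those parameters are in $\mathcal{H}_0$. For the "only if" direction, pick $\lambda_k\in\mathcal{H}_0$ with $\lambda_k\to\lambda$, so $v_{\lambda_k}\in B_{\lambda_k}$. Openness gives $v_\lambda\notin B_\lambda$. Assuming no parabolic cycle on $\partial B_\lambda$, Ma\~n\'e--Sad--Sullivan stability makes $\partial B_\lambda$ move holomorphically near $\lambda$; passing to the limit of $v_{\lambda_k}$ forces $v_\lambda\in\partial B_\lambda$, and since in the stable regime $f_\lambda^{-1}(\partial B_\lambda)=\partial B_\lambda$, the critical points $C_\lambda\subset f_\lambda^{-1}(\{v_\lambda^{\pm}\})$ all lie on $\partial B_\lambda$.

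\textbf{Part 2 (angles and cusps).} Via $\Psi$, the parameter ray $\mathcal{R}_0(\theta)$ is the $\Psi$-preimage of the radial line at argument $\theta$; thus for $\lambda\in\mathcal{R}_0(\theta)$ the critical value $v_\lambda$ sits on the dynamical ray $R_\lambda(\theta)\subset B_\lambda$ and has external angle $\theta$, and the angle map $\theta\mapsto n\theta$ reflects the local degree $n$ of $f_\lambda$ at $\infty$. For $(\Leftarrow)$, assume $n^p\theta\equiv\theta\pmod{\mathbb{Z}}$. By Part 1, $\nu(\theta)$ satisfies either (a) $C_{\nu(\theta)}\subset\partial B_{\nu(\theta)}$ or (b) $\partial B_{\nu(\theta)}$ contains a parabolic cycle. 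Suppose (a). By continuity of the external angle along the parameter ray, $v_{\nu(\theta)}$ is the landing point of the $f_{\nu(\theta)}^p$-invariant ray $R_{\nu(\theta)}(\theta)$, hence periodic of period dividing $p$. For the McMullen family, the equation $f_\lambda(z)=v_\lambda^{\pm}$ reduces to $(z^n\mp\sqrt{\lambda})^2=0$, so every preimage of a critical value is itself a critical point (with multiplicity two); the cycle element mapping to $v_{\nu(\theta)}$ is therefore a point of $C_{\nu(\theta)}$. The cycle is then super-attracting, putting $\nu(\theta)$ in the interior of some hyperbolic component and contradicting $\nu(\theta)\in\partial\mathcal{H}_0$. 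Hence (b) holds and $\nu(\theta)$ is a cusp. For $(\Rightarrow)$, if $\nu(\theta)$ is a cusp with parabolic cycle of period $k$ and multiplier a primitive $q$-th root of unity, the Douady--Hubbard landing theorem identifies the cycle as the landing of finitely many dynamical rays in $B_{\nu(\theta)}$ whose angles are periodic under $\theta\mapsto n\theta$ with period dividing $kq$; the parameter-to-dynamical-ray pairing through $\Psi$ shows that $\theta$ is one of these periodic angles, so $n^{kq}\theta\equiv\theta\pmod{\mathbb{Z}}$.

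\textbf{Main obstacle.} The crux is the super-attracting/parabolic dichotomy in direction $(\Leftarrow)$ of Part 2. The McMullen-specific fact that every preimage of a critical value is critical with multiplicity two forces any periodic critical orbit to be super-attracting, and combined with Part 1 this is what rules out the Misiurewicz alternative at periodic angles. Both the stability step in the "only if" direction of Part 1 and the continuity of the external-angle/parameter-ray correspondence used throughout Part 2 rest on the Yoccoz puzzle machinery developed in \cite{WQY}.
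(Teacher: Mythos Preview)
Your Part~2 argument is essentially the paper's (Theorem~\ref{cuspc}), modulo a bookkeeping slip: since $\Phi_0(\lambda)=\phi_\lambda(v_\lambda^+)^2$, the parameter ray $\mathcal{R}_0(\theta)$ corresponds to the \emph{dynamical} ray $R_\lambda(\theta/2)$ (Lemma~\ref{3d}), not $R_\lambda(\theta)$. After this correction your super-attracting dichotomy goes through exactly as in the paper.

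Part~1 has real gaps on both directions.

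\emph{``Only if'' via MSS stability does not work.} Your claim is that if $\partial B_\lambda$ carries no parabolic cycle then $\partial B_\lambda$ moves holomorphically near $\lambda$, and then limits of $v_{\lambda_k}\in B_{\lambda_k}$ with $\lambda_k\in\mathcal H_0$ force $v_\lambda\in\partial B_\lambda$. But for $\lambda_k\in\mathcal H_0$ the Julia set is a Cantor set and $B_{\lambda_k}$ is the entire Fatou set; there is no Jordan curve ``$\partial B_{\lambda_k}$'' to which a holomorphic motion of $\partial B_\lambda$ could correspond. More fundamentally, absence of a parabolic cycle on $\partial B_\lambda$ is far from implying $J$-stability: the case you are trying to reach, $C_\lambda\subset\partial B_\lambda$, is itself unstable, so the hypothesis you need for MSS is incompatible with the conclusion you want. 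The paper instead argues by contraposition via Lemma~\ref{3a}: if $\partial B_\lambda$ contains neither $C_\lambda$ nor a parabolic point, backward contraction (Theorem~\ref{4a}, proved by Yoccoz puzzles in \cite{WQY}) yields a polynomial-like restriction $f_\lambda^k:V_\lambda\to U_\lambda$ with single critical point $\infty$; this structure persists for all nearby $u$, so $J(f_u)$ is not Cantor, contradicting $\lambda\in\partial\mathcal H_0$.

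\emph{``If'' with $C_\lambda\subset\partial B_\lambda$ is circular.} You assert that $\Psi(\mu)=\phi_\mu(v_\mu)$ extends continuously to $\lambda$ and that ``transversality'' produces nearby $\mu\in\mathcal H_0$. But $\Psi$ is only defined on $\mathcal H_0$ (this is precisely the condition $v_\mu\in B_\mu$), so invoking its continuous extension to $\lambda$ presupposes $\lambda\in\overline{\mathcal H_0}$, which is what you must prove; and no transversality statement has been established. The paper's route here is substantive: it uses the rigidity result Proposition~\ref{3ff} (two non-cusp parameters with $v_\lambda^+\in\partial B_\lambda$ and the same landing angle coincide, proved by building a qc conjugacy from cut rays and invoking the zero-measure Theorem~\ref{lm}) to identify $\lambda$ with the landing point $\nu(2\theta(\lambda))\in\partial\mathcal H_0$. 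For the parabolic case your implosion sketch is plausible but unproven in this setting; the paper instead observes that a parabolic on $\partial B_\lambda$ forces $\partial B_\lambda$ to fail to be a quasi-circle (Lemma~\ref{3bb}), while Proposition~\ref{hm} shows $\partial B_\mu$ \emph{is} a quasi-circle for every $\mu\in\mathbb C\setminus\overline{\mathcal H_0}$, so $\lambda\in\partial\mathcal H_0$.
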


 Theorem \ref{cusp} is an immediate consequence of Theorem \ref{cc} since  $\{\theta; n^p\theta\equiv\theta {\ \rm mod \ } \mathbb{Z}, p\geq1\}$ is
a dense subset of the unit circle $\mathbb{S}$.

The heart part of paper is to prove Theorem  \ref{main}. We briefly sketch the idea of the proof and the organization of the paper.
The idea  is  different from the {\it parapuzzle} techniques (known to be a  powerful  tool to study the boundary of hyperbolic components, see \cite{R2,R3}).
 We rely more on the dynamical {\it Yoccoz puzzle} rather than the parapuzzle.
From Section \ref{escape} to Section \ref{sier},  we study the  hyperbolic components of escape type, which are called {\it escape domains}.

In Section \ref{escape}, we  parameterize  the escape domains.

In Section \ref{cut}, we sketch the construction of the cut rays which are important in the study of escape domains. The crucial fact of  cut rays is that they move continuously in Hausdorff topology with respect to the parameter.

In Section \ref{0Jordan}, we will prove that $\partial\mathcal{H}_0$ is a Jordan curve.
We first show that $\partial\mathcal{H}_0$ is locally connected. To this end, we show that any two maps (which are not cusps) in the same impression of the parameter ray are quasiconformally conjugate (Proposition \ref{3ff}).
 This conjugacy is constructed with the help of cut rays  and it is holomorphic in the Fatou set.  A `zero measure argument' for non-renormalizable map following Lyubich (see Section \ref{leb}) implies that the conjugacy is actually a M\"obius map. So the two maps are  the same.
 After we knowing the local connectivity, a dynamical result (Theorem \ref{11a}) enables us to show that the boundary is a  Jordan curve.

In Section \ref{sier}, we will prove that the boundaries of all escape domains of level $k\geq3$ (these escape domains  are called {\it Sierpinski holes}) are Jordan curves. The proof is based on three
ingredients:  the boundary regularity of $\partial\mathcal{H}_0$, holomorphic motion and continuity of cut rays.  We remark that our approach also applies to $\partial\mathcal{H}_2$.  This will yield a different proof from Devaney's in \cite{D1}.

 In Section \ref{babe}, we show that  the hyperbolic components which are not of `escape type' are  Jordan domains.


\vskip0.2cm
\noindent {\bf Acknowledgement.} X. Wang would like to thank  the Institute for Computational and Experimental Research in Mathematics (ICERM)  for
hospitality and financial support. We would like to thank Xavier Buff for helpful discussions.

\section{Escape domains  and parameterizations}\label{escape}

There are two kinds of hyperbolic McMullen maps based on the behavior of the free critical orbit.  If the   orbit
escapes to infinity, the corresponding hyperbolic component is called an {\it escape domain}.  If the  orbit
tends to an attracting cycle other than $\infty$, the corresponding hyperbolic component is of  {\it renormalizable type}.

In this section, we present some known facts about escape domains.  We refer the reader to \cite{DLU} for more background materials.
The hyperbolic components  of renormalizable type will be discussed in Section \ref{babe}.

For any $\lambda\in \mathbb{C}^*$,  the {\it Julia set} $J(f_\lambda)$ of $f_\lambda$ can be identified as the boundary of $\cup_{k\geq0}f_\lambda^{-k}(B_\lambda)$. It satisfies
 $e^{\pi i/n}J(f_\lambda)=J(f_\lambda)$. The {\it Fatou set} $F(f_\lambda)$ of $f_\lambda$ is defined by   $F(f_\lambda)=\mathbb{\widehat{C}}- J(f_\lambda)$.
 We denote by $T_\lambda$ the component of $f_\lambda^{-1}(B_\lambda)$ containing $0$. It is possible that $B_\lambda=T_\lambda$. In that case, the critical set $C_\lambda\subset B_\lambda$ and $J(f_\lambda)$ is a Cantor set (see Theorem \ref{1c}).


For any $k\geq0$, we define a parameter set $\mathcal{H}_k$ as follows:
$$\mathcal{H}_k=\{\lambda\in
\mathbb{C}^*; k \text{ is the first integer such that } f_\lambda^{k}(C_\lambda)\subset B_\lambda\}.$$
A component of $\mathcal{H}_k$ is called a {\it escape domain} of level $k$.
One may verify that  $\mathcal{H}_0=\{\lambda\in
\mathbb{C}^*; v_\lambda^+\in B_\lambda\}, \mathcal{H}_1=\emptyset$ and $\mathcal{H}_k=\{\lambda\in
\mathbb{C}^*; f_\lambda^{k-2}(v_\lambda^+)\in T_\lambda\neq B_\lambda\}$  for $k\geq2$.
See Figure 1.
The complement of the escape domains is called the
{\it non-escape locus} $\mathcal{M}$. It can be written as
$$\mathcal{M}=\{\lambda\in\mathbb{C}^*; \  f_\lambda^k(v^+_\lambda)\text{ does not  tend to infinity  as }
 k\rightarrow \infty\}.$$
The set $\mathcal{M}$ is invariant under the maps $z\mapsto \overline{z}$ and $z\mapsto e^{\frac{2\pi i}{n-1}} z$.

 \begin{thm}[Escape
Trichotomy \cite{DLU} and Connectivity \cite{DR}]\label{1c}

\

1. If $\lambda\in \mathcal{H}_0$, then $J(f_\lambda)$ is a Cantor
set.

2. If $\lambda\in \mathcal{H}_2$, then $J(f_\lambda)$
is a Cantor set of circles.

3. If $\lambda\in \mathcal{H}_k$ for some $k\geq3$, then $J(f_\lambda)$ is a Sierpi\'nski curve.

4. If $\lambda\in \mathcal{M}$, then Julia set $J(f_\lambda)$ is connected.
\end{thm}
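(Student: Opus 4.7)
The plan is to treat the four cases separately: parts 1--3 (the escape cases) by covering-space analysis of $f_\lambda$ on $B_\lambda$ and $T_\lambda$ with iterative pullbacks, and part 4 by Riemann--Hurwitz combined with Sullivan's no-wandering-domain theorem.

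For part 1 ($\lambda \in \mathcal{H}_0$), the hypothesis $C_\lambda \subset B_\lambda$ together with the order-$n$ pole at $0$ forces $T_\lambda = B_\lambda$, so $B_\lambda$ is completely invariant, absorbs every critical point, and $J(f_\lambda) = \partial B_\lambda$. I would build a fundamental annulus $A \subset B_\lambda$ bounded by a large round equipotential (supplied by the B\"ottcher coordinate near $\infty$) and its $f_\lambda$-preimage; then $f_\lambda^{-1}(A)$ consists of $2n$ pairwise disjoint topological annuli each mapped homeomorphically onto $A$, realizing $J(f_\lambda)$ as the attractor of the full shift on $2n$ symbols. Hyperbolic expansion on $J$ gives shrinking diameters, identifying $J(f_\lambda)$ as a Cantor set.

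For part 2 ($\lambda \in \mathcal{H}_2$), we have $v_\lambda^\pm \in T_\lambda \ne B_\lambda$, and by the symmetry $z \mapsto e^{\pi i/n}z$ all of $C_\lambda$ lies inside $T_\lambda$. So $f_\lambda\colon T_\lambda \to B_\lambda$ absorbs every finite critical point, and a Riemann--Hurwitz count forces $T_\lambda$ to be a topological annulus. The complement $\widehat{\mathbb{C}} \setminus \overline{B_\lambda \cup T_\lambda}$ consists of annular neighborhoods of $J$, and iterating the pullback produces nested annuli of uniformly bounded modulus; by the covering-space dynamics $J(f_\lambda)$ is homeomorphic to a Cantor set of circles.

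For part 3 ($\lambda \in \mathcal{H}_k$, $k \ge 3$), the condition $f_\lambda^{k-2}(v_\lambda^+) \in T_\lambda$ with $k-2 \ge 1$ places $v_\lambda^\pm$ outside $T_\lambda$, so $f_\lambda\colon T_\lambda \to B_\lambda$ is branched only at $0$, making $T_\lambda$ a topological disk; iterating, every Fatou component is simply connected and eventually lands in $B_\lambda$. I would then apply Whyburn's topological characterization of the Sierpi\'nski curve: $J(f_\lambda)$ must be compact, connected, locally connected, nowhere dense, with complementary components bounded by pairwise disjoint Jordan curves. Connectedness comes from part 4, local connectedness from hyperbolic expansion on $J$, and Jordan-ness of Fatou boundaries from extending the B\"ottcher coordinate on $\overline{B_\lambda}$ and pulling back through the dynamics. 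The main obstacle is the pairwise disjointness of Fatou closures, which together with Jordan-ness is what singles out the Sierpi\'nski topology; this hinges on the $k \ge 3$ hypothesis, which rules out the post-critical collisions of Fatou boundaries that would merge distinct components (as indeed happens for $k=2$).

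For part 4 ($\lambda \in \mathcal{M}$), I would use the standard criterion that the Julia set of a rational map is connected if and only if every Fatou component is simply connected. Since $\lambda \notin \mathcal{H}_0$, the $2n$-fold rotational symmetry of the dynamics forces $C_\lambda \cap B_\lambda = \emptyset$, and also $T_\lambda \ne B_\lambda$ (otherwise Riemann--Hurwitz applied to the would-be degree-$2n$ self-map $f_\lambda\colon B_\lambda \to B_\lambda$ yields a non-integer Euler characteristic). Hence $f_\lambda\colon B_\lambda \to B_\lambda$ has degree $n$, branched only at $\infty$, and Riemann--Hurwitz yields $\chi(B_\lambda) = 1$, so $B_\lambda$ is a disk. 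Every other Fatou component is (pre)periodic by Sullivan's theorem, and induction along its orbit, using that $v_\lambda^\pm$ never enter the grand orbit of $B_\lambda$, shows it is simply connected as well.
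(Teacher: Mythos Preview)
The paper does not supply its own proof of this theorem: it is quoted as background, with parts 1--3 attributed to \cite{DLU} and part 4 to \cite{DR}. So there is no in-paper argument to compare against; your outline is being measured against the cited literature rather than anything in Section~\ref{escape}.

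That said, your sketch for part~2 contains a genuine error. For $\lambda\in\mathcal{H}_2$ one has $v_\lambda^\pm\in T_\lambda$, but this does \emph{not} place $C_\lambda$ inside $T_\lambda$: since $f_\lambda(T_\lambda)=B_\lambda\neq T_\lambda$, any point of $T_\lambda$ has image in $B_\lambda$, whereas $f_\lambda(C_\lambda)=\{v_\lambda^\pm\}\subset T_\lambda$, so in fact $C_\lambda\cap T_\lambda=\emptyset$. Consequently $f_\lambda\colon T_\lambda\to B_\lambda$ has degree $n$ with the single critical point $0$ of local degree $n$, and Riemann--Hurwitz gives $\chi(T_\lambda)=n\cdot 1-(n-1)=1$; that is, $T_\lambda$ is a disk, not an annulus. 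The annular structure that produces the Cantor set of circles appears one step later: it is $f_\lambda^{-1}(T_\lambda)$ (which contains all of $C_\lambda$ and has Euler characteristic $2n\cdot 1-2n=0$) that is an annulus separating $T_\lambda$ from $B_\lambda$, and the nested pullbacks of the annulus $\widehat{\mathbb{C}}\setminus(\overline{B_\lambda}\cup\overline{T_\lambda})$ give the Cantor family of circles. Your covering-space picture is salvageable once this bookkeeping is corrected.

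A smaller slip: in part~3 you invoke part~4 for connectedness of $J(f_\lambda)$, but $\mathcal{H}_k$ with $k\ge 3$ lies in the escape locus, not in $\mathcal{M}$. Connectedness there should instead come from your own observation that every Fatou component is a simply connected preimage of $B_\lambda$.
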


\begin{figure}[!htpb]
  \setlength{\unitlength}{1mm}
  {\centering
  \includegraphics[width=39mm]{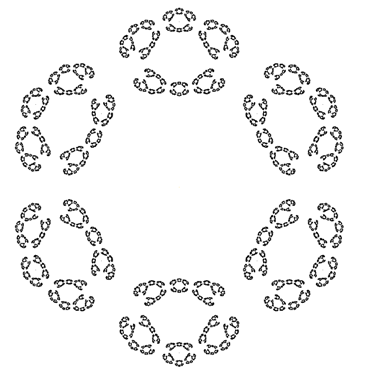}
  \includegraphics[width=41mm]{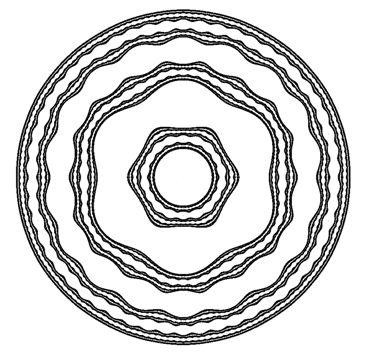}}\\
  {\centering\includegraphics[width=40mm]{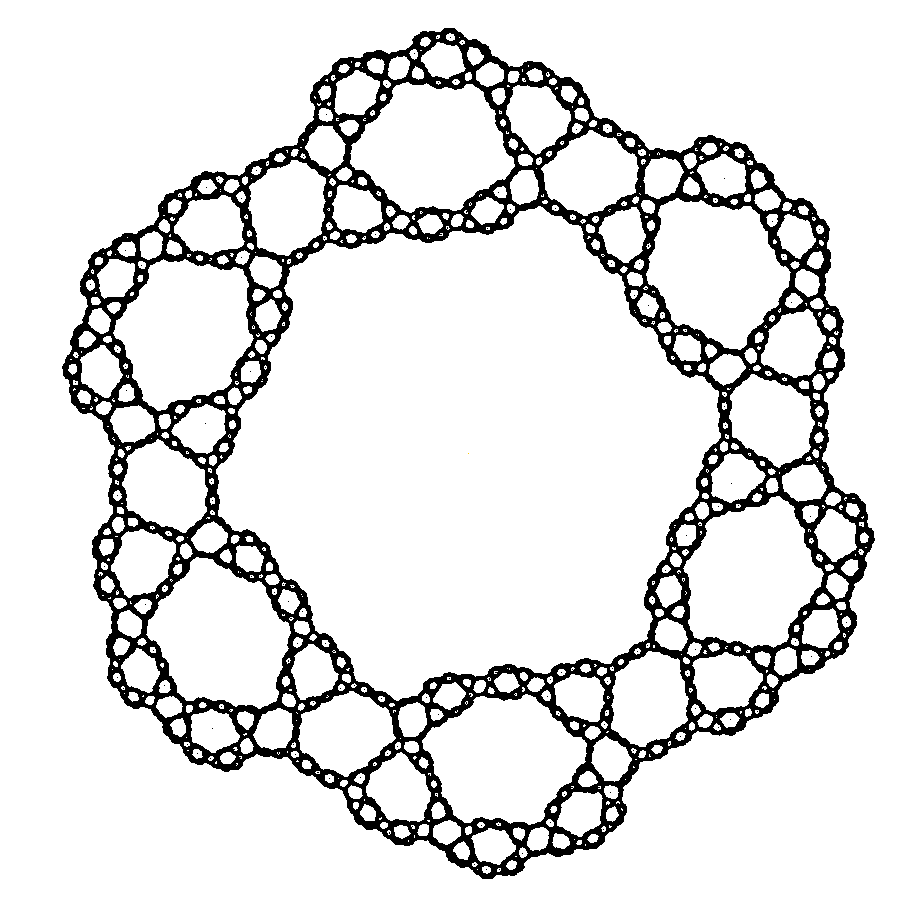}
  \includegraphics[width=42mm]{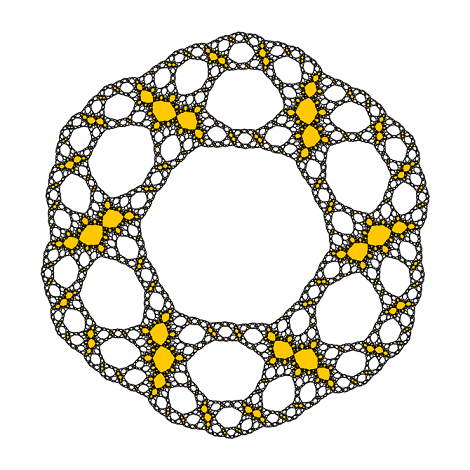}}
\begin{minipage}{14cm}
  \caption{The Julia sets: a Cantor set (upper-left),  a Cantor set of circles
  (upper-right), a Sierpinski curve (lower-left) and  a connected set (lower-right).}
  \label{Fig_McMullen}
  \end{minipage}
\end{figure}

Based on Theorem \ref{1c}, we give some remarks on the escape domains. According to Devaney, $\mathcal{H}_0$ is called the
 {\it Cantor set locus}, $\mathcal{H}_2$   is called the
{\it McMullen domain}, $\mathcal{H}_k$ with $k\geq3$  is called the
{\it Sierpinski locus} and each of its component is called a {\it Sierpinski hole}. Devaney  showed that the boundary $\partial \mathcal{H}_2$ is a Jordan curve \cite{D1}  and $\mathcal{H}_k$ with $k\geq3$  consists
of $(2n)^{k-3}(n-1)$ disk components \cite{D2}.

The B\"{o}ttcher map $\phi_\lambda$ of $f_\lambda$ is defined in a
neighborhood of $\infty$ by
$\phi_\lambda(z)=\displaystyle\lim_{k\rightarrow\infty}(f^k_\lambda(z))^{n^{-k}}.$
It is unique if we require $\phi'_\lambda(\infty)=1$. The map $\phi_\lambda$ satisfies
$\phi_\lambda(f_\lambda(z))=\phi_\lambda(z)^n$ and
 $\phi_\lambda(e^{\pi i/n} z)=e^{\pi i/n}\phi_\lambda(z)$.
   One may verify that near infinity,
$$\phi_\lambda(z)=\sum_{k\geq0}a_k(\lambda)z^{1-2kn}, \ a_0(\lambda)=1, a_1(\lambda)=\lambda/n, \cdots.$$
 If $\lambda\in \mathbb{C}^*\setminus \mathcal{H}_0$, then both $B_\lambda$ and $T_\lambda$ are simply connected. In that case,
there is  a unique Riemann mapping $\psi_{\lambda}:
T_\lambda\rightarrow\mathbb{D}$, such that
$\psi_{\lambda}(w)^{-n}=\phi_{\lambda}(f_\lambda(w))$ for $w\in T_\lambda$ and $\psi'_{\lambda}(0)=\sqrt[n]{\lambda}$.
The {\it external ray} $R_\lambda(t)$ of angle $t$ in $B_\lambda$ is defined by ${R}_\lambda(t):=\phi_\lambda^{-1}((1,+\infty)e^{2\pi i t})$, the
{\it internal ray} $R_{T_\lambda}(t)$ of angle $t$ in $T_\lambda$ is defined by $R_{T_\lambda}(t):=\psi_\lambda^{-1}((0,1)e^{2\pi i t})$.



\begin{thm}[{Parameterization of escape domains,\cite{D2}\cite{R1}\cite{S}}]\label{1a}

\

 1. $\mathcal{H}_0$ is the
unbounded component of $\mathbb{C}^*-\mathcal{M}$. The map
$\Phi_0:\mathcal{H}_0\rightarrow
\mathbb{C}-\overline{\mathbb{D}} $ defined by
$\Phi_0(\lambda)=\phi_\lambda(v_\lambda^+)^2$
is a conformal isomorphism.

2. $\mathcal{H}_2$ is the component of
$\mathbb{C}^*-\mathcal{M}$ containing the punctured neighborhood of
$0$. The holomorphic map $\Phi_2:\mathcal{H}_2\rightarrow
\mathbb{C}-\overline{\mathbb{D}} $ defined via
$\Phi_2(\lambda)^{n-2}={\phi_\lambda(f_\lambda(v_\lambda^+))^{2}}$ and
$\lim_{\lambda\rightarrow0}\lambda\Phi_2(\lambda)=2^{\frac{2n}{2-n}}$, is a conformal isomorphism .

3. Let $\mathcal{H}$ be a escape domain of level $k\geq3$.
The map   $\Phi_\mathcal{H}:\mathcal{H}\rightarrow{\mathbb{D}} $ defined by
$\Phi_\mathcal{H}(\lambda)=\psi_\lambda(f_\lambda^{k-2}(v_\lambda^+))$
is a conformal isomorphism.
\end{thm}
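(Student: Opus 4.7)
The strategy in each case is the classical four-step parameterization: the candidate map $\Phi_\mathcal{H}$ is built from a dynamical uniformization (the B\"ottcher coordinate $\phi_\lambda$ at $\infty$ in Parts 1 and 2, the Riemann coordinate $\psi_\lambda$ on $T_\lambda$ in Part 3) evaluated at the appropriate critical iterate, and I verify that $\Phi_\mathcal{H}$ is well-defined and holomorphic on $\mathcal{H}$, proper onto the stated target, and of degree one.

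Well-definedness requires handling the branch of $\sqrt\lambda$ hidden in $v_\lambda^+=2\sqrt\lambda$. Iterating $n$ times the symmetry $\phi_\lambda(e^{\pi i/n}z)=e^{\pi i/n}\phi_\lambda(z)$ yields $\phi_\lambda(-z)=-\phi_\lambda(z)$, so the square $\phi_\lambda(v_\lambda^+)^2$ is branch-free in Part 1. The identity $f_\lambda(v_\lambda^\pm)=(\pm 1)^n f_\lambda(v_\lambda^+)$ resolves the ambiguity in $\phi_\lambda(f_\lambda(v_\lambda^+))^2$ for Part 2, and the normalization $\lim_{\lambda\to 0}\lambda\Phi_2(\lambda)=2^{2n/(2-n)}$ picks out a single $(n-2)$-th root consistent with the asymptotic expansion $f_\lambda(v_\lambda^+)\sim 2^{-n}\lambda^{1-n/2}$ as $\lambda\to 0$. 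Part 3 is analogous, using the normalization $\psi_\lambda'(0)=\sqrt[n]\lambda$ together with the $n$-fold symmetry on $T_\lambda$. Holomorphy then follows from the holomorphic dependence of $\phi_\lambda$, $\psi_\lambda$, and of the critical iterate, on $\lambda$.

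For properness I would show that if $\lambda_k\to\lambda_*\in\partial\mathcal{H}\cap\mathbb{C}^*$, then in the limit the tracked critical iterate must reach the Julia set, forcing $|\phi_{\lambda_k}|\to 1$ or $|\psi_{\lambda_k}|\to 1$; while for the unbounded ends---Part 1 as $\lambda\to\infty$ and Part 2 as $\lambda\to 0$---the B\"ottcher expansion $\phi_\lambda(z)=z+O(z^{1-2n})$ combined with the explicit form of $v_\lambda^+$ gives $\Phi_0(\lambda)\sim 4\lambda$ and $\Phi_2(\lambda)\sim 2^{2n/(2-n)}/\lambda$, both sending $\Phi_\mathcal{H}$ to the puncture of the target. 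The final step is to identify the degree. In Parts 1 and 2 these asymptotics show the local degree at the puncture of the target annulus is one, and properness promotes this to global degree one. Part 3 is genuinely harder, since $\mathcal{H}$ is a bounded simply connected disk with no puncture to anchor the count; I would establish injectivity via a qc-rigidity argument---two parameters with the same $\Phi_\mathcal{H}$-value produce a conjugacy between the two maps on the escaping grand orbit which extends quasiconformally to $\widehat{\mathbb{C}}$, and the absence of nontrivial quasiconformal deformations of a hyperbolic McMullen map forces the two parameters to agree. I expect this rigidity step in Part 3 to be the main technical obstacle, but it is the standard mechanism underlying parameterization results for hyperbolic components.
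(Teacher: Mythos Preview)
The paper does not prove this theorem; it is quoted from the references \cite{D2}, \cite{R1}, \cite{S} as background for the rest of the article, so there is no in-paper proof to compare against. Your sketch is the standard Douady--Hubbard style parameterization argument and is essentially what those references carry out. The handling of Parts 1 and 2 is correct: the oddness $\phi_\lambda(-z)=-\phi_\lambda(z)$ removes the branch ambiguity, and the explicit asymptotics at the punctures ($\Phi_0(\lambda)\sim 4\lambda$ as $\lambda\to\infty$, $\Phi_2(\lambda)\sim 2^{2n/(2-n)}\lambda^{-1}$ as $\lambda\to 0$) pin the degree to one.

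For Part 3 your qc-rigidity route is valid: maps in a Sierpi\'nski hole are hyperbolic, the Julia set has measure zero, and a quasiconformal conjugacy that is conformal on the Fatou set is therefore M\"obius; the only conformal symmetries of the family are the $(n-1)$-fold rotations, which are excluded once $\mathcal{H}$ lies in a single fundamental sector. An equivalent and slightly more concrete alternative, used in \cite{D2}, is to count $\Phi_\mathcal{H}^{-1}(0)$ directly: these are the postcritically finite ``centers'' where $f_\lambda^{k-2}(v_\lambda^+)=0$, and one shows each component contains exactly one such parameter (by Thurston rigidity or by an explicit polynomial count of the total number of centers of level $k$ against the total number of level-$k$ components). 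Either approach closes the argument.
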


Both $\Phi_0$ and $\Phi_2$ satisfy $\Phi_\epsilon(e^{\frac{2\pi i}{n-1}}\lambda)=
e^{\frac{2\pi i}{n-1}}\Phi_\epsilon(\lambda)$ and  $\Phi_\epsilon(\overline{\lambda})=
\overline{\Phi_\epsilon(\lambda)}$ for $\epsilon\in\{0,2\}$ and $\lambda\in \mathcal{H}_\epsilon$.
Thus they take the forms $\Phi_\epsilon(\lambda)=\lambda\Psi_\epsilon(\lambda^{n-1})$, where $\Psi_\epsilon$ is holomorphic function whose expansion has real coefficients.

\begin{thm}[{Connectivity  of $\mathcal{M}$}]\label{1b} The non-escape locus $\mathcal{M}$ is connected and has logarithmic
capacity equal to 1/4.
\end{thm}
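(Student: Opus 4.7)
The plan is to derive both parts from the conformal uniformization $\Phi_0:\mathcal{H}_0\to\mathbb{C}\setminus\overline{\mathbb{D}}$ of Theorem \ref{1a}(1). Since $\mathcal{H}_0$ contains a neighborhood of $\infty$ and $\Phi_0$ is a conformal isomorphism, $\widehat{\mathcal{H}}_0:=\mathcal{H}_0\cup\{\infty\}$ is a simply connected domain in $\widehat{\mathbb{C}}$; equivalently, $\mathcal{K}_0:=\widehat{\mathbb{C}}\setminus\widehat{\mathcal{H}}_0$ is a compact connected subset of $\mathbb{C}$. By the same token, Theorem \ref{1a}(2)--(3) yields that $\widehat{\mathcal{H}}_2:=\mathcal{H}_2\cup\{0\}$ and each Sierpi\'nski hole is a topological open disk in $\widehat{\mathbb{C}}$; denote these collectively by $\{\mathcal{H}_i'\}_{i\in I}$, so that $\widehat{\mathbb{C}}\setminus\mathcal{M}=\widehat{\mathcal{H}}_0\sqcup\bigsqcup_i\mathcal{H}_i'$ is a disjoint union of topological open disks.

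For connectedness of $\mathcal{M}$, each $\partial\mathcal{H}_i'$ is a nonempty connected subset of $\mathcal{M}$ (boundary of a bounded simply connected planar domain), and distinct escape components have pairwise disjoint closures (a structural property of the escape dynamics). A standard planar-topology argument then applies: the complement in $\widehat{\mathbb{C}}$ of a countable disjoint family of topological open disks with pairwise disjoint closures and diameters tending to zero is path-connected---one constructs a path between any two prescribed points by rerouting around each disk along its boundary, the diameter control guaranteeing convergence of the modified path. The diameter-to-zero property holds here because infinitely many disks with pairwise disjoint closures inside a compact planar set must cluster, so only finitely many may exceed any given size. Consequently $\mathcal{M}$ is connected.

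For the capacity, I invoke the classical identity $\operatorname{cap}(\mathcal{M})=\operatorname{cap}(\widehat{\mathcal{M}})$, where $\widehat{\mathcal{M}}$ is the polynomial convex hull obtained by filling in the bounded components of $\mathbb{C}\setminus\mathcal{M}$. These components are precisely the $\mathcal{H}_i'$, so $\widehat{\mathcal{M}}=\mathbb{C}\setminus\mathcal{H}_0$. Since $\log|\Phi_0|$ is the Green function of $\mathcal{H}_0$ with pole at $\infty$, the capacity equals $1/|c|$, where $c$ is the leading coefficient in $\Phi_0(\lambda)=c\lambda+O(1)$ as $\lambda\to\infty$. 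Substituting $\Phi_0(\lambda)=\phi_\lambda(2\sqrt{\lambda})^2$ with the B\"ottcher expansion $\phi_\lambda(z)=z+\tfrac{\lambda}{n}z^{1-2n}+\cdots$ gives $\phi_\lambda(2\sqrt{\lambda})\sim 2\sqrt{\lambda}$, whence $\Phi_0(\lambda)\sim 4\lambda$, $c=4$, and $\operatorname{cap}(\mathcal{M})=1/4$.

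The main obstacle is making the asymptotic $\Phi_0(\lambda)/\lambda\to 4$ rigorous: the B\"ottcher coefficients $a_k(\lambda)$ depend on $\lambda$ (with $a_k$ of polynomial degree roughly $k$ in $\lambda$), so one cannot just read off $c$ term-by-term. A cleaner route exploits the $(n-1)$-fold rotational symmetry $\Phi_0(e^{2\pi i/(n-1)}\lambda)=e^{2\pi i/(n-1)}\Phi_0(\lambda)$ noted after Theorem \ref{1a}: it forces $\Phi_0(\lambda)=\lambda\,\Psi(\lambda^{n-1})$ for some $\Psi$ holomorphic on the image of $\mathcal{H}_0$ under $\lambda\mapsto\lambda^{n-1}$, which contains a neighborhood of $\infty$. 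Hence $\Psi$ extends holomorphically at $\infty$ with $\Psi(\infty)=c$, and it suffices to compute $c$ along a single explicit asymptotic direction, yielding $c=4$.
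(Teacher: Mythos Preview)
Your capacity computation is fine and is exactly what the paper does: it simply records the expansion $\Phi_0(\lambda)=4\lambda+\mathcal{O}(\lambda^{2-n})$ near $\infty$ and reads off $\operatorname{cap}(\mathcal{M})=1/4$. Your extra care with the symmetry $\Phi_0(\lambda)=\lambda\Psi(\lambda^{n-1})$ to justify the asymptotic is a reasonable elaboration of what the paper leaves implicit.

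Your connectedness argument, however, has real gaps. First, the assertion that ``distinct escape components have pairwise disjoint closures'' is neither proved nor available at this point of the paper; in fact the later analysis of Sierpi\'nski holes explicitly allows for impressions $\mathcal{X}_{\mathcal{H}}(t)$ lying on $\partial\mathcal{H}_0$, so one should not assume this. Second, your ``diameters tend to zero'' step is false as stated: pairwise disjoint closures inside a compact set do \emph{not} force diameters to shrink for topological disks (take thin rectangles $[\tfrac{1}{n+1}+\epsilon_n,\tfrac{1}{n}-\epsilon_n]\times[0,1]$, all of diameter $\ge 1$). What controls shrinking is total area, not mere disjointness, and area does not bound diameter for general Jordan domains. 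Third, even granting all this, the ``reroute along boundaries'' construction of a path is delicate when the boundaries are merely continua, and you have not justified convergence.

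All of this machinery is unnecessary. The paper's proof is a single line: by Theorem~\ref{1a} every component of $\widehat{\mathbb{C}}\setminus\mathcal{M}$ is a topological disk, hence $\mathcal{M}$ is connected. This invokes the elementary fact that a compact set $X\subset S^2$ with every complementary component simply connected must be connected (if $X=A\sqcup B$, separate $A$ from $B$ by a Jordan curve $\gamma\subset S^2\setminus X$; $\gamma$ lies in a single simply connected component $W$, so one of the two Jordan disks bounded by $\gamma$ is contained in $W\subset S^2\setminus X$, contradicting that it contains $A$ or $B$). Replace your first paragraph with this observation.
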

\begin{proof} By Theorem \ref{1a},  each  component of $\mathbb{\widehat{C}}-\mathcal{M}$ is a topological disk. So $\mathcal{M}$ is connected.  The   logarithmic
capacity of  $\mathcal{M}$  follows from the expansion of $\Phi_0$ near $\infty$:
 $\Phi_0(\lambda)=4\lambda+\mathcal{O}(\lambda^{2-n})$.
\end{proof}


%


\section{Cut rays in  the dynamical plane}\label{cut}

The topology of $\partial B_\lambda$ is considered  in \cite{WQY}, where the authors showed
\begin{thm}[\cite{WQY}]\label{11a}
For any $n\geq3$ and any $\lambda\in\mathbb{C}^*$,

$\bullet$
$\partial B_\lambda$ is either a Cantor set or a Jordan curve. In the latter case, all Fatou components eventually mapped to
  $B_\lambda$ are Jordan domains.

$\bullet$ If
$\partial B_\lambda$ is a Jordan curve  containing neither
 a parabolic point nor the recurrent critical set $C_\lambda$, then $\partial B_\lambda$ is a quasi-circle.
\end{thm}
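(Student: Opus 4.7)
The plan is to establish the two bullets in turn and to flag the main obstacle at the end. For the dichotomy in the first bullet, I would split on whether the free critical set $C_\lambda$ lies in $B_\lambda$. If $C_\lambda\subset B_\lambda$, then since $0\mapsto\infty$, the Fatou component $T_\lambda$ containing $0$ must coincide with $B_\lambda$, so every critical point of $f_\lambda$ lies in $B_\lambda$; by the Escape Trichotomy (Theorem~\ref{1c}) this puts $\lambda\in\mathcal{H}_0$, and $\partial B_\lambda=J(f_\lambda)$ is a Cantor set. Otherwise $\infty$ is the only critical point in $B_\lambda$, so $f_\lambda|_{B_\lambda}$ is proper of degree $n$ with a unique critical point, whence by Riemann--Hurwitz $B_\lambda$ is simply connected and the B\"ottcher coordinate $\phi_\lambda$ extends to a biholomorphism $B_\lambda\to\mathbb{C}\setminus\overline{\mathbb{D}}$ conjugating $f_\lambda$ to $z\mapsto z^n$.

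To upgrade simple-connectivity of $B_\lambda$ to the statement that $\partial B_\lambda$ is a Jordan curve, I would establish local connectivity of $\partial B_\lambda$ via the Yoccoz puzzle from \cite{WQY}: a standard shrinking argument shows that the nested puzzle pieces around any point of $\partial B_\lambda$ shrink to that point, so that $\phi_\lambda^{-1}$ extends continuously to $\overline{\mathbb{C}\setminus\mathbb{D}}$ by Carath\'eodory's theorem, and injectivity of the boundary extension follows by ruling out co-landing of distinct external rays through a combinatorial analysis on the puzzle. The claim that every Fatou component $U$ eventually mapped onto $B_\lambda$ is itself a Jordan domain then follows by pullback induction: if $\partial V$ is a Jordan curve and $f_\lambda\colon U\to V$ is proper, then, using that $C_\lambda$ is disjoint from all preimages of $B_\lambda$ in the non-Cantor case, the Jordan parameterization of $\partial V$ lifts to one of $\partial U$.

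For the second bullet, assume $\partial B_\lambda$ is a Jordan curve containing neither a parabolic cycle nor the recurrent critical set $C_\lambda$. These are precisely the hypotheses of Ma\~n\'e's theorem on forward-invariant closed subsets of the Julia set, which supplies a neighborhood of $\partial B_\lambda$ on which some iterate of $f_\lambda$ is uniformly expanding in a conformal metric. On the other side, the B\"ottcher coordinate realizes $f_\lambda|_{B_\lambda}$ as the uniformly expanding map $z\mapsto z^n$ on $\partial\mathbb{D}$; combining expansion on both sides of $\partial B_\lambda$ promotes the Carath\'eodory extension of $\phi_\lambda^{-1}$ to a quasi-symmetric homeomorphism $\partial\mathbb{D}\to\partial B_\lambda$, which forces $\partial B_\lambda$ to be a quasi-circle. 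The main obstacle is the puzzle-shrinking step of the first bullet when $f_\lambda$ is renormalizable: the naive Yoccoz shrinking can fail near a small Mandelbrot-like copy, and one must argue that any renormalization of $f_\lambda$ is supported in a Fatou cycle disjoint from $\overline{B_\lambda}$, so that puzzle pieces nested around $\partial B_\lambda$ remain unaffected and still shrink to points.
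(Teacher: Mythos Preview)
First, note that the paper does not prove this theorem at all: it is quoted from \cite{WQY}, and the only hint given here about the argument is that it goes through the Yoccoz puzzle and, for the second bullet, through the backward-contraction estimate stated as Theorem~\ref{4a}. So there is no ``paper's own proof'' to compare against line by line; one can only compare your outline with what the paper tells us about the proof in \cite{WQY}.

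Your sketch has two genuine gaps.

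\emph{The renormalization obstacle.} Your proposed fix---that any renormalization of $f_\lambda$ is supported away from $\overline{B_\lambda}$---is false. Lemma~\ref{3bb} in this very paper exhibits, for every cusp, a quadratic-like renormalization whose filled Julia set $K$ meets $\partial B_\lambda$ (in a single point, the parabolic point), and Lemma~\ref{tableau}(1) says more generally that the small filled Julia set may touch $\partial B_\lambda$ in one point. So you cannot simply push the renormalization off $\overline{B_\lambda}$; the actual argument in \cite{WQY} has to show that puzzle pieces around points of $\partial B_\lambda$ shrink \emph{despite} this contact, using that the intersection is a single (pre)periodic point.

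\emph{The Ma\~n\'e shortcut for the quasi-circle.} Your plan for the second bullet is to apply Ma\~n\'e's theorem to get uniform expansion on $\partial B_\lambda$. But the hypothesis of the theorem allows the case where $C_\lambda\subset\partial B_\lambda$ with $C_\lambda$ \emph{non}-recurrent (this actually occurs, e.g.\ for $\lambda=\nu(\theta)$ with $\theta$ irrational, see Remark~\ref{classi}). Then $\partial B_\lambda$ contains critical points, so no iterate of $f_\lambda$ is uniformly expanding on a neighborhood of $\partial B_\lambda$ in any conformal metric, and the ``expansion on both sides'' argument breaks down. This is exactly why \cite{WQY} proves the sharper puzzle-based estimate of Theorem~\ref{4a} (exponential backward contraction for components meeting $\partial B_\lambda$), which survives the presence of non-recurrent critical points on the boundary and is what drives the quasi-circle conclusion. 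Similarly, your pullback step for the Jordan-domain claim (``$C_\lambda$ disjoint from all preimages of $B_\lambda$'') is true for the open components but not for their closures, so the boundary lift can be branched; this is harmless in the end, but it is not the triviality you present it as.
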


Here, the critical set $C_\lambda$ is called {\it recurrent} if $C_\lambda\subset J(f_\lambda)$ and the set $\cup_{k\geq1}f^k_\lambda(C_\lambda)$ has an accumulation  point in $C_\lambda$.
The proof of Theorem \ref{11a} is based on  the Yoccoz puzzle  theory.
 To apply this theory, we need to construct
  a  kind of Jordan curve  which cuts the
Julia set into two connected parts. These curves are called {\it cut rays}.
They play a crucial role in our study of the boundaries of escape domains. For this, we briefly sketch their constructions here.

To begin, we identify the unit circle
$\mathbb{S}=\mathbb{R}/\mathbb{Z}$ with $(0,1]$. We define a map $\tau:  \mathbb{S}\rightarrow\mathbb{S}$ by $\tau
(\theta)=n\theta  \text{ mod 1} $. Let
$\Theta_k=(\frac{k}{2n},\frac{k+1}{2n}]$ for $0\leq k \leq n$ and
$\Theta_{-k}=(\frac{k}{2n}+\frac{1}{2},\frac{k+1}{2n}+\frac{1}{2}]$ for $1\leq k\leq n-1$. Obviously,
$(0,1]=\cup_{-n<j\leq n}\Theta_j$.


 Let $\Theta$ be the set of all angles $\theta\in (0,1]$ whose orbits
 remain in $\bigcup_{k=1}^{n-1}(\Theta_k\cup
\Theta_{-k})$ under all iterations of $\tau$.  One may
verify that $\Theta$ is a Cantor set. Given an angle $\theta\in \Theta$, the {\it itinerary} of $\theta$ is a sequence of symbols
 $(s_0,s_1,s_2,\cdots)\in
\{\pm1,\cdots,\pm(n-1)\}^{\mathbb{N}}$ such that $\tau^k(\theta)\in \Theta_{s_k}$ for all
$k\geq0$. The angle $\theta\in \Theta$ and its itinerary $(s_0,s_1,s_2,\cdots)$ satisfy the identity (\cite{WQY}, Lemma 3.1):

$$\theta=\frac{1}{2}\bigg(\frac{\chi(s_0)}{n}+\sum_{k\geq1}\frac{|s_k|}{n^{k+1}}\bigg),$$
where $\chi(s_0)=s_0$ if $0\leq s_0\leq n$ and  $\chi(s_0)=n-s_0$  if $-(n-1)\leq s_0\leq -1$.

Note that  ${e}^{\pi i/(n-1)}f_\lambda
(z)=(-1)^nf_{{e}^{2\pi i/(n-1)}\lambda}({e}^{\pi i/(n-1)}z)$ for all  $\lambda\in \mathbb{C}^*$. This implies that
the fundamental domain  of the parameter plane is
$$\mathcal{F}_0=\{\lambda\in \mathbb{C^*}; 0\leq\arg\lambda<{2\pi}/(n-1)\}.$$ We denote the  interior
of $\mathcal{F}_0$ by
 $$\mathcal{F}:=\{\lambda\in \mathbb{C^*}; 0<\arg\lambda<{2\pi}/(n-1)\}.$$

In our discussion,  we assume $\lambda\in \mathcal{F}_0$ and
let $O_\lambda=\cup_{k\geq0}f_\lambda^{-k}(\infty)$ be the grand orbit of $\infty$.
Let $c_0=c_0(\lambda)=\sqrt[2n]{\lambda}$ be the critical point that
lies on $\mathbb{R}^+:=[0, +\infty)$ when $\lambda\in \mathbb{R}^+$ and varies
analytically as $\lambda$ ranges over $\mathcal{F}$. Let
$c_k(\lambda)=c_0{e}^{k\pi i/n}$ for $1\leq k\leq 2n-1$. The critical points
$c_k$ with $k$ even are mapped to $v_\lambda^+$
while the critical points $c_k$ with $k$ odd are mapped to
$v_\lambda^-$.

Let $\ell_k=c_k[0,+\infty]$ be the closed straight line connecting $0$ to $\infty$ and passing
through $c_k$ for $0\leq k\leq 2n-1$. The closed sector bounded by $\ell_k$ and
$\ell_{k+1}$ is denoted by $S_k^\lambda$ for $0\leq k\leq n$. Define
$S_{-k}^\lambda=-S_k^\lambda$ for $1\leq k\leq n-1$. These sectors are
arranged counterclockwise about the origin as
$S_0^\lambda,S_1^\lambda,\cdots,S_n^\lambda,S_{-1}^\lambda,\cdots,S_{-(n-1)}^\lambda$.

The critical value $v_\lambda^+$ always lies in $S_0^\lambda$ because $\arg
c_0\leq\arg v_\lambda^+<\arg c_1$ for all $\lambda\in\mathcal{F}_0$.
Correspondingly, the critical value $v_\lambda^-$ lies in $S_n^\lambda$.
The image of $\ell_k$ under $f_\lambda$ is a
straight ray connecting one of the critical values to $\infty$; this
ray is called a \textit{critical value ray}. As a consequence,
$f_\lambda$ maps the interior of each of the sectors of
${S_{\pm1}^\lambda,\cdots,S_{\pm(n-1)}^\lambda}$ univalently onto a region
$\Upsilon_\lambda$, which can be identified as the complex sphere
$\mathbb{\widehat{C}}$ minus two critical value rays. For any $\epsilon\in \{\pm1,\cdots,\pm(n-1)\}$, let $int(S_\epsilon^\lambda)$ be the interior of $S_\epsilon^\lambda$,  the inverse of
$f_\lambda:int(S_\epsilon^\lambda)\rightarrow \Upsilon_\lambda$  is denoted by $h_\epsilon^\lambda:\Upsilon_\lambda\rightarrow int(S_\epsilon^\lambda)$.

\begin{thm}[Cut ray, \cite{D3} \cite{WQY}]\label{2a} For  any $\lambda\in\mathcal{F}$ and any  angle  $\theta\in\Theta$ with itinerary
 $(s_0,s_1,s_2,\cdots)$, the
set $$\Omega_\lambda^\theta:=\bigcap_{k\geq0} f_\lambda^{-k}(S_{s_k}^\lambda\cup S_{-s_k}^\lambda)$$
is a Jordan curve intersecting the Julia set $J(f_\lambda)$ in
a Cantor set. 
\end{thm}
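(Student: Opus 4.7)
The plan is to exhibit $\Omega_\lambda^\theta$ as the decreasing nested intersection
\[
X_k := \bigcap_{j=0}^{k} f_\lambda^{-j}\bigl(S_{s_j}^\lambda \cup S_{-s_j}^\lambda\bigr), \qquad \Omega_\lambda^\theta = \bigcap_{k\geq 0} X_k,
\]
and to establish, in parallel, (a) that each $X_k$ is a closed ``wedge'' of two topological disks meeting exactly at $\{0, \infty\}$, and (b) that the transverse diameter of each leaf tends to $0$.

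For (a), the level-$0$ piece $X_0 = S_{s_0}^\lambda \cup S_{-s_0}^\lambda$ is manifestly two closed sectors meeting only at $0$ and $\infty$. For the inductive step, write $X_k(\theta) = X_0(\theta) \cap f_\lambda^{-1}\bigl(X_{k-1}(\tau\theta)\bigr)$. Since every itinerary entry $s_j$ lies in $\{\pm 1, \ldots, \pm(n-1)\}$, each restriction $f_\lambda: \mathrm{int}(S_{\pm s_0}^\lambda) \to \Upsilon_\lambda$ is a conformal isomorphism with univalent inverse $h_{\pm s_0}^\lambda$. Pulling $X_{k-1}(\tau\theta)$ back through these two branches and extending continuously to the corners $\{0, \infty\}$ yields, inside each of $S_{\pm s_0}^\lambda$, a set homeomorphic to $X_{k-1}(\tau\theta)$; inductively the wedge-of-two-leaves structure is preserved.

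For (b), the iterated compositions $H_k := h_{s_0}^\lambda \circ h_{s_1}^\lambda \circ \cdots \circ h_{s_{k-1}}^\lambda$ are univalent on $\Upsilon_\lambda$ and take values in $\mathrm{int}(S_{s_0}^\lambda)$. On any compact subset of $\Upsilon_\lambda$ bounded away from $\{0, \infty\}$, the Schwarz--Pick inequality for the hyperbolic metric of $\Upsilon_\lambda$ gives a strict contraction under each single step $h_\epsilon^\lambda$; iterating through the finite alphabet of symbols yields exponential decay of the Euclidean diameter of $H_k(K)$. Near the corners $0$ and $\infty$, where the hyperbolic argument degenerates, one linearizes directly: at $\infty$ the B\"ottcher coordinate $\phi_\lambda$ conjugates $f_\lambda$ to $w \mapsto w^n$, in which the sectors $S_{\pm s_j}^\lambda$ become radial sectors in $\widehat{\mathbb{C}} \setminus \overline{\mathbb{D}}$ and $X_k$ locally becomes a pair of nested radial sectors of angular width $\sim n^{-k}$; at $0$ the analogous local picture comes from the asymptotic $f_\lambda(z) \sim \lambda z^{-n}$. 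Combining the two regimes gives $\mathrm{diam}(X_k) \to 0$ transverse to the forming curve, so $\Omega_\lambda^\theta$ is a Jordan curve through $0$ and $\infty$.

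The Cantor-set statement then follows by unwinding the construction: $\Omega_\lambda^\theta \setminus J(f_\lambda)$ is a countable disjoint union of open arcs, consisting in $B_\lambda$ of the two external rays $R_\lambda(t_\pm)$ whose angles under $\tau$-iteration realize the sector itinerary $(\pm s_j)$, and in every Fatou component eventually mapped onto $B_\lambda$ of a similarly determined pair of internal rays. Removing a countable dense family of disjoint open arcs from a Jordan curve leaves a compact, perfect, totally disconnected set, i.e., a Cantor set. The main obstacle is the shrinkage estimate at the super-attracting corners $0$ and $\infty$: since hyperbolic-metric contraction degenerates there, one must switch to explicit B\"ottcher linearization and verify that both the angular width and the radial extent of $X_k$ in those local coordinates are controlled by the single geometric rate $n^{-k}$.
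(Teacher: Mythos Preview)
The paper does not give its own proof of this theorem; it cites \cite{D3} and \cite{WQY} and only remarks that the argument given there for $\lambda\in\mathcal{F}\cap\mathcal{M}$ carries over verbatim to all $\lambda\in\mathcal{F}$. So there is no in-paper proof to compare against directly. The paper's discussion of the closely related Theorem~\ref{cutreal} does, however, indicate the method used in \cite{WQY}: one constructs two sequences of Jordan curves converging to the boundaries of the two complementary components of $\widehat{\mathbb{C}}\setminus\Omega_\lambda^\theta$, deduces local connectivity of $\Omega_\lambda^\theta$, and then checks that the two complementary components share a common boundary. This is a somewhat different organization from your ``shrinking nested wedges'' picture, though the underlying combinatorics are the same.

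Your sketch has the right architecture but two steps are not yet justified. First, in the Schwarz--Pick part, the inclusion $\mathrm{int}(S_\epsilon^\lambda)\hookrightarrow\Upsilon_\lambda$ is \emph{not} relatively compact: both domains accumulate at $0$ and $\infty$. Hence a single branch $h_\epsilon^\lambda$ is only a non-uniform contraction of the hyperbolic metric of $\Upsilon_\lambda$, with contraction factor tending to $1$ near the corners. To extract exponential decay of $\mathrm{diam}\,H_k(K)$ you must first check that the successive images $H_j(K)$ remain in a fixed compact set bounded away from $0,\infty$; but that radial control is precisely what you postponed to the B\"ottcher linearization, so the ``interior'' and ``corner'' regimes cannot be handled independently as written. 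Second, ``transverse diameter $\to 0$'' alone does not force the nested intersection of closed topological disks to be a simple arc: such intersections can be non-locally-connected continua. To conclude that each half of $\Omega_\lambda^\theta$ is an arc you need either an explicit parametrization (e.g.\ by the symbolic address, or by gluing the countably many Fatou arcs $R_\lambda(\cdot)$ you already identified) or the approximating-Jordan-curves argument of \cite{WQY}.
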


Theorem \ref{2a} is originally proven for  the parameters $\lambda\in \mathcal{F}\cap\mathcal{M}$ in \cite{WQY}. The proof actually works for all $\lambda\in\mathcal{F}$
without any difference.

Here are some facts about the cut rays:
 $\Omega_\lambda^\theta=-\Omega_\lambda^\theta$ and
 $\Omega_\lambda^\theta=\Omega_\lambda^{\theta+1/2}$;  $R_\lambda(\theta)\cup R_\lambda(\theta+\frac{1}{2})\subset\Omega_\lambda^\theta\cap F(f_\lambda)\subset \cup_{k\geq0}f_\lambda^{-k}(B_\lambda)$;
 $0,\infty\in \Omega_\lambda^\theta$ and $\Omega_\lambda^\theta\setminus \{0,\infty\}$ is
contained in the interior of $S_{s_0}^\lambda\cup S_{-s_0}^\lambda$;  $f_\lambda(\Omega^\theta_\lambda)=\Omega_\lambda^{\tau(\theta)}$ and
$f_\lambda:\Omega_\lambda^{\theta}
 \rightarrow\Omega_\lambda^{\tau(\theta)}$
is a two-to-one map. We refer the reader to
\cite{WQY} for more details of the cut rays.

\begin{figure}[h]
\begin{center}
\includegraphics[height=6cm]{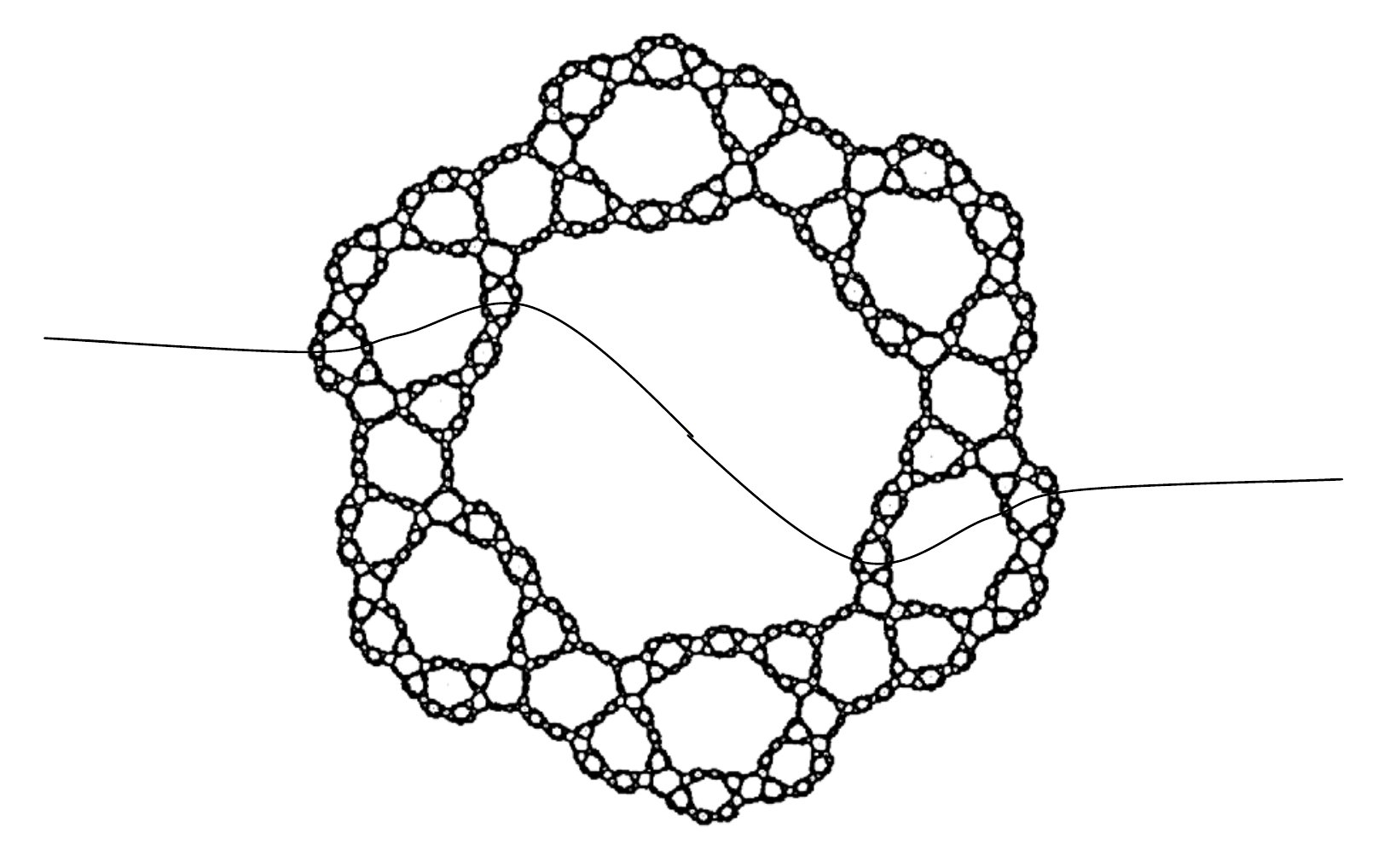}
\put(-140,73){$0$} \put(-40,83){$R_\lambda(1)$} \put(-270,85){$R_\lambda(1/2)$}
 \caption{An example of cut ray: $\Omega_\lambda^{1}=\Omega_\lambda^{1/2}$. $(n=3)$}
\end{center}\label{f5}
\end{figure}

Now we give some new dynamical properties of the cut rays.
These facts are useful to study the parameter plane. We denote by $B(z,r)$ the Euclidean disk centered at $z$ with radius $r$. For any
$\lambda\in \mathbb{C}^*\setminus\mathcal{H}_0$, set $B_\lambda^L:=\{w\in B_\lambda;
|\phi_\lambda(w)|>L\}$ for $L\geq1$.

\begin{lem}[Holomorphic motion of the cut rays] \label{3e}  Fix an angle $\theta\in \Theta$,  the cut ray $\Omega_\lambda^\theta$ moves holomorphically with respect to  $\lambda\in \mathcal{F}$.

\end{lem}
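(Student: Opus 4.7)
The plan is to construct an injective holomorphic motion on a dense subset of $\Omega_{\lambda_0}^\theta$ and invoke the $\lambda$-lemma to extend it to the full cut ray. Fix $\lambda_0\in\mathcal{F}$ and a simply connected neighborhood $U\subset\mathcal{F}$ of $\lambda_0$; holomorphicity in $\lambda$ is local, so it suffices to work over $U$.

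The first ingredient is that the rays $R_\lambda(\theta)\cup R_\lambda(\theta+\tfrac12)\subset\Omega_\lambda^\theta$ move holomorphically: the B\"ottcher map $\phi_\lambda$ depends holomorphically on $\lambda$ in a neighborhood of $\infty$ that varies holomorphically, so each point $\phi_\lambda^{-1}(re^{2\pi it})$ with $r$ sufficiently large is holomorphic in $\lambda\in U$. The second ingredient is that the inverse branches $h_\epsilon^\lambda:\Upsilon_\lambda\to\mathrm{int}(S_\epsilon^\lambda)$ depend holomorphically on $\lambda$: the critical value rays bounding $\Upsilon_\lambda$ vary holomorphically (they are the straight rays through the explicitly given critical values $v_\lambda^\pm$), and the inverse function theorem applies away from critical values. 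Following the itinerary $(s_0,s_1,\dots)$ of $\theta$, for each finite sign sequence $\varepsilon=(\varepsilon_0,\dots,\varepsilon_{N-1})$ with $\varepsilon_k\in\{s_k,-s_k\}$ the composition $h_{\varepsilon_0}^\lambda\circ\cdots\circ h_{\varepsilon_{N-1}}^\lambda$ is holomorphic in $\lambda$, and the preimage components of $\Omega_\lambda^{\tau^N\theta}$ inside $\Omega_\lambda^\theta$ are in bijection with such sequences.

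Combining the two ingredients, one defines $h_\lambda(z)$ on the countable set $E\subset\Omega_{\lambda_0}^\theta$ consisting of $\{0,\infty\}$ together with the $f_{\lambda_0}$-iterated preimages in $\Omega_{\lambda_0}^\theta$ of points on $R_{\lambda_0}(\tau^N\theta)\cup R_{\lambda_0}(\tau^N\theta+\tfrac12)$, $N\ge 0$, by sending $z$ to the point of $\Omega_\lambda^\theta$ with the same symbolic address. Then $\lambda\mapsto h_\lambda(z)$ is holomorphic and $h_\lambda$ is injective, since distinct symbolic addresses correspond to distinct points on the Jordan curve $\Omega_\lambda^\theta$ (Theorem \ref{2a}). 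The set $E$ is dense in $\Omega_{\lambda_0}^\theta$, so the $\lambda$-lemma extends $h_\lambda$ to a holomorphic motion on $\overline{E}=\Omega_{\lambda_0}^\theta$; continuity and compactness, together with the same symbolic description on the target side, force the image to equal $\Omega_\lambda^\theta$.

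The hard part is ensuring that the iterated inverse branches have uniformly shrinking diameter across $\lambda\in U$, so that each infinite symbolic address really picks out a single point on the Julia-set (Cantor) portion of the cut ray and the above correspondence is well-defined there. This shrinking property is one of the core estimates underlying the original construction of the cut rays in \cite{WQY}, and after possibly shrinking $U$ it transfers to a uniform-in-$\lambda$ estimate, which is exactly what the motion requires.
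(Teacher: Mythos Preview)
Your approach is essentially the paper's: define the motion on the dense Fatou portion of $\Omega_{\lambda_0}^\theta$ via B\"ottcher coordinates and the inverse branches $h_\epsilon^\lambda$, then invoke the $\lambda$-lemma to pass to the closure. The paper in fact defines $h$ on the whole set $(\Omega_{\lambda_0}^\theta\setminus O_{\lambda_0})\cap F(f_{\lambda_0})$ (not a countable subset), and does so globally over $\mathcal{F}$ rather than over a neighborhood $U$, but these are cosmetic differences.

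The substantive point is that your final paragraph is unnecessary and reflects a misreading of what the $\lambda$-lemma already gives you. You never need to assign a point of $\Omega_\lambda^\theta$ to an \emph{infinite} symbolic address, and hence you need no uniform shrinking estimate. Every point of your dense set $E$ lies in the Fatou part and reaches $B_{\lambda_0}^L$ after \emph{finitely} many iterates; its image under the motion is determined by a finite composition $h_{\epsilon_0}^\lambda\circ\cdots\circ h_{\epsilon_{N-1}}^\lambda\circ\phi_\lambda^{-1}\circ\phi_{\lambda_0}\circ f_{\lambda_0}^N$, which is manifestly holomorphic in $\lambda$ and injective in $z$. Once you have checked that this motion carries $E$ onto the corresponding Fatou subset of $\Omega_\lambda^\theta$ (whose closure is $\Omega_\lambda^\theta$), the $\lambda$-lemma extends it to $\overline{E}=\Omega_{\lambda_0}^\theta$, and continuity alone forces $H(\lambda,\Omega_{\lambda_0}^\theta)=\Omega_\lambda^\theta$. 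The Cantor (Julia) portion of the cut ray is handled entirely by this closure argument; you already said as much at the end of your third paragraph, so the fourth paragraph should simply be deleted.
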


\begin{proof} Fix a parameter $\lambda_0\in \mathcal{F}$.  We will define a holomorphic motion $h: \mathcal{F}\times
((\Omega_{\lambda_0}^\theta\setminus O_{\lambda_0}) \cap
F(f_{\lambda_0}))\rightarrow \mathbb{\widehat{C}}$ with base point
$\lambda_0$  as follows. For any $\lambda\in\mathcal{F}$, there is  a number $L\geq1$ (depending on $\lambda$) such
that  the B\"ottcher map
$\phi_u: B_u^L\rightarrow \{\zeta\in \mathbb{\widehat{C}};
|\zeta|>L\}$ is a conformal isomorphism, for $u\in\{\lambda,\lambda_0\}$.

 If $z\in (\Omega_{\lambda_0}^\theta\setminus O_{\lambda_0})\cap
B_{\lambda_0}^L$, we define
$h(\lambda,z)=\phi_\lambda^{-1}\circ\phi_{\lambda_0}(z)$. If $z\in
(\Omega_{\lambda_0}^\theta\setminus O_{\lambda_0}) \cap
(F(f_{\lambda_0})\setminus B_{\lambda_0}^L)$, we consider the
{\it itinerary} of $z$, which is the unique sequence of symbols $(\epsilon_0,\epsilon_1,\epsilon_2,\cdots)\in \{\pm1,\cdots,\pm(n-1)\}^{\mathbb{N}}$
such that $f_{\lambda_0}^k(z)\in S_{\epsilon_k}^{\lambda_0}$ for all $k\geq0$.
 Let $N\geq1$ be the
first integer such that $f_{\lambda_0}^N(z)\in B_{\lambda_0}^L$. We
define $h(\lambda,z)=h^\lambda_{\epsilon_0}\circ\cdots\circ
h^\lambda_{\epsilon_{N-1}}
\circ\phi_{\lambda}^{-1}\circ\phi_{\lambda_0}(f_{\lambda_0}^N(z))$.
In this way, we  get a well-defined map $h: \mathcal{F}\times
((\Omega_{\lambda_0}^\theta\setminus O_{\lambda_0}) \cap
F(f_{\lambda_0}))\rightarrow \mathbb{\widehat{C}}$. Since both
$\phi_\lambda$ and $h^\lambda_{\epsilon_j}$ are holomorphic with
respect to $\lambda\in \mathcal{F}$, one may verify that the map $h$
is a holomorphic motion parameterized by $\mathcal{F}$, with base
point $\lambda_0$ (namely, $h(\lambda_0,z)\equiv z$). Moreover, for any $\lambda\in \mathcal{F}$, we
have $h(\lambda, (\Omega_{\lambda_0}^\theta\setminus O_{\lambda_0})
\cap F(f_{\lambda_0}))=(\Omega_{\lambda}^\theta\setminus
O_{\lambda}) \cap F(f_{\lambda})$.

Note that for any $\lambda\in \mathcal{F}$, the closure of
$(\Omega_{\lambda}^\theta\setminus O_{\lambda}) \cap
F(f_{\lambda})$ is $\Omega_{\lambda}^\theta$.
 By the $\lambda$-Lemma (see \cite{MSS} or \cite{Mc4}), there is a holomorphic
motion $H: \mathcal{F}\times \Omega_{\lambda_0}^\theta\rightarrow
\mathbb{\widehat{C}}$ extending $h$ and for any $\lambda\in
\mathcal{F}$, one has  $H(\lambda,
\Omega_{\lambda_0}^\theta)=\Omega_{\lambda}^\theta$. That is to say,
the cut ray $\Omega_{\lambda}^\theta$ moves holomorphically  when $\lambda$ ranges over
$\mathcal{F}$.
\end{proof}

The following result will be used to prove Proposition \ref{3ff}.

\begin{lem}[Periodic cut rays are quasi-circles] \label{3ef} For any $\lambda\in\mathcal{F}$ and any periodic angle $\theta\in \Theta$,
the cut ray $\Omega_\lambda^\theta$ is a quasi-circle.
\end{lem}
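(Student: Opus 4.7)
The plan is to reduce the assertion, via the holomorphic motion of Lemma 3.5, to a single convenient parameter where hyperbolicity makes the quasi-circle property transparent. Since $\Omega_\lambda^\theta$ moves holomorphically over the simply connected domain $\mathcal{F}$, the strong $\lambda$-lemma (Slodkowski, or Bers--Royden) extends this to a holomorphic motion of the entire sphere; in particular, for any two parameters $\lambda_0, \lambda \in \mathcal{F}$ there is a global quasi-conformal homeomorphism of $\widehat{\mathbb{C}}$ carrying $\Omega_{\lambda_0}^\theta$ onto $\Omega_\lambda^\theta$. Since the quasi-circle property is preserved under such homeomorphisms, it suffices to establish the lemma for one chosen $\lambda_0$.

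I would take $\lambda_0 \in \mathcal{F} \cap \mathcal{H}_0$, which is nonempty because $\mathcal{H}_0$ contains every parameter of sufficiently large modulus. The map $f_{\lambda_0}$ is then hyperbolic: its post-critical set lies entirely in $B_{\lambda_0}$, disjoint from $J(f_{\lambda_0})$, so $f_{\lambda_0}$ is uniformly expanding on $J(f_{\lambda_0})$ in the spherical metric. Writing $p$ for the period of $\theta$ under $\tau$ and $g := f_{\lambda_0}^p$, the iterate $g$ preserves $\Omega_{\lambda_0}^\theta$ as a $2^p$-to-$1$ branched cover and uniformly expands the Cantor set $K_{\lambda_0} := \Omega_{\lambda_0}^\theta \cap J(f_{\lambda_0})$. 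To finish, I would verify Ahlfors' bounded-turning condition: the sectors $\{S_{s_j}^{\lambda_0} \cup S_{-s_j}^{\lambda_0}\}_{0 \leq j < p}$, together with their iterated $g$-preimages, form a Markov partition for $g$ near $\Omega_{\lambda_0}^\theta$, and the univalent inverse branches of $g^k$ along this partition are conformal with uniform Koebe distortion (they avoid the post-critical set). For any close pair $a, b \in \Omega_{\lambda_0}^\theta$, applying $g^k$ the maximal number of iterations keeping the pair in one Markov piece rescales them to a macroscopically separated pair, after which Koebe distortion on the corresponding inverse branch bounds the diameter of the subarc between $a$ and $b$ by a universal multiple of $|a-b|$.

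The main technical obstacle is treating the critical points $0$ and $\infty$ of $g$ that actually lie on the cut ray, where $g$ is \emph{not} expanding and the naive rescaling scheme breaks down. Here one has to work with the explicit analytic picture of $\Omega_{\lambda_0}^\theta$ near these two points: by the B\"ottcher coordinate at $\infty$, the rays $R_{\lambda_0}(\theta)$ and $R_{\lambda_0}(\theta+1/2)$ merge smoothly through $\infty$ along a straight line, and pulling back once by $f_{\lambda_0}$ gives the symmetric picture in a neighborhood of $0$. Patching these local analytic pictures to the Markov/expansion estimates on $K_{\lambda_0}$ produces the uniform bounded-turning constant, which by Ahlfors' characterization finishes the proof that $\Omega_{\lambda_0}^\theta$ is a quasi-circle.
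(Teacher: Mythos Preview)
Your approach is correct and genuinely different from the paper's. The paper proves bounded turning \emph{directly for every} $\lambda\in\mathcal{F}$: using that the periodic cut rays $\Omega_\lambda^{\theta},\ldots,\Omega_\lambda^{\tau^{p-1}(\theta)}$ avoid the free critical values $v_\lambda^{\pm}$, one gets a uniform $\delta_0>0$ so that all inverse branches of $f_\lambda^k$ along these curves are univalent on $\delta_0$-balls. For a close pair $z_1,z_2$ on $\Omega_\lambda^\theta$, the paper iterates forward until the subarc $L(z_1,z_2)$ first reaches diameter $\asymp\delta_0$ (splitting into two cases according to whether $L(z_1,z_2)$ meets the grand orbit of $\infty$), bounds the turning at that macroscopic scale, and pulls back by Koebe. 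No hyperbolicity and no holomorphic motion is used.

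Your reduction via Lemma~\ref{3e} plus Slodkowski is clean and buys you the freedom to work at a single hyperbolic $\lambda_0\in\mathcal{H}_0\cap\mathcal{F}$, where uniform expansion on $J(f_{\lambda_0})$ replaces the paper's forward-iteration argument. The price is invoking Slodkowski's extension theorem, which is considerably deeper than anything the paper's direct proof needs, and you still have to carry out essentially the same Koebe/bounded-turning scheme at $\lambda_0$ (including the patching near $0$ and $\infty$ that you correctly flag). So the two proofs share the same analytic core; yours front-loads a soft reduction, the paper's is self-contained and uniform in $\lambda$.
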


It's not clear whether  $\Omega_\lambda^\theta$ is  a quasi-circle when $\theta\in\Theta$ is not rational. But Lemma \ref{3ef} suffices for our purposes.

\begin{proof} Let $p>0$ be the first integer such
$f_\lambda^p(\Omega_\lambda^\theta)=\Omega_\lambda^\theta$. We fix
some large number $L>1$ so that  $\phi_\lambda: B_\lambda^L\rightarrow \{\zeta\in \mathbb{\widehat{C}};
|\zeta|>L\}$ is a conformal map.

Since the cut rays  $\Omega_\lambda^\theta,\cdots,
\Omega_\lambda^{\tau^{p-1}(\theta)}$ avoid the free critical values $v_\lambda^{\pm}$, there is a number
 $\delta_0>0$ such that for any $z\in \Omega_\lambda^\theta\setminus
\{\infty\}$, any integer $k\geq0$ and any component $U_k(z)$ of
$f_\lambda^{-k}(B(z,\delta_0))$ intersecting with
$\Omega_\lambda^\theta\cup\cdots\cup
\Omega_\lambda^{\tau^{p-1}(\theta)}$, we have that $U_k(z)$ is a disk and $f_\lambda^k:{U_k(z)}\rightarrow B(z,\delta_0)$ is a conformal map.

Before further discussion, we need a  fact.

\textbf{Fact:} {\it Let $\gamma$ be a
Jordan curve in  $\mathbb{\widehat{C}}$, then for any $\epsilon>0$,  there is a constant $\delta_\epsilon>0$ such that if
$z_1,z_2\in\gamma$ satisfying $d_{\mathbb{\widehat{C}}}(z_1,z_2)<\delta_\epsilon$, then
$\min\{{\rm diam}_{\mathbb{\widehat{C}}}(\gamma_1),{\rm diam}_{\mathbb{\widehat{C}}}(\gamma_2)\}$
$<\epsilon$, where
$\gamma_1, \gamma_2$ are two components of $\gamma-\{z_1,z_2\}$,
$d_{\mathbb{\widehat{C}}}$ is the  spherical distance  and ${\rm diam}_{\mathbb{\widehat{C}}}$ is the spherical diameter.}


Note that the Euclidean distance  is comparable with the spherical distance in any compact subset of $\mathbb{C}$.
Since
$\Omega_\lambda^\theta,\cdots,\Omega_\lambda^{\tau^{p-1}(\theta)}$
are Jordan curves on $\mathbb{\widehat{C}}$, it follows from the
above fact that for  any $\epsilon>0$,  there is a number
$\delta(\epsilon)>0$ so that for any $0\leq j<p$ and any pair
$\zeta_1,\zeta_2\in \Omega_\lambda^{\tau^{j}(\theta)}\setminus
f_\lambda^{-1}(B_\lambda^{2L})$, the condition $|\zeta_1-\zeta_2|<\delta(\epsilon)$
implies the Euclidean diameter ${\rm diam }(L(\zeta_1,\zeta_2))<\epsilon$, where $L(\zeta_1,\zeta_2)$ is the bounded component
 of $\Omega_\lambda^{\tau^{j}(\theta)}\setminus \{\zeta_1,\zeta_2\}$.

 To show $\Omega_\lambda^\theta$ is a quasi-circle, since the
external rays and their preimages on $\Omega_\lambda^\theta$ are
analytic curves, it suffices to show that for any pair $z_1,z_2\in
\Omega_\lambda^\theta\setminus f_\lambda^{-1}(B_\lambda^{2L})$, the
turning $T(z_1,z_2):={\rm diam }(L(z_1,z_2))/|z_1-z_2|$ is bounded.
To this end, fix a small positive number $\varepsilon \ll\delta_0$ and
consider  $T(z_1,z_2)$ with $z_1,z_2\in
\Omega_\lambda^\theta\setminus f_\lambda^{-1}(B_\lambda^{2L})$ and
$|z_1-z_2|< \varepsilon$. There are two possibilities:

\textbf{Case 1. $L(z_1,z_2)\cap O_\lambda=\emptyset$}. In that case, by the structure of cut rays (see \cite{WQY}, Proposition 3.2 and Figure 3), we have
 $L(z_1,z_2)\subset \cup_{k\geq 0}f_\lambda^{-k}(B_\lambda)$ and
${\rm diam}(f_\lambda^k(L(z_1,z_2)))\rightarrow\infty$ as
$k\rightarrow\infty$. So there is an integer $\ell> 0$ such that
${\rm diam}(f_\lambda^\ell(L(z_1,z_2)))<\delta_0/2$ and ${\rm
diam}(f_{\lambda}^{\ell+1}(L(z_1,z_2)))\geq\delta_0/2$. By a suitable choice of
$L$, we may assume that either $f_\lambda^\ell(L(z_1,z_2))\subset f_\lambda^{-1}(B_\lambda^L)$ or $f_\lambda^\ell(L(z_1,z_2))\subset
\Omega_\lambda^{\tau^{\ell}(\theta)}\setminus
f_\lambda^{-1}(B_\lambda^{2L})$.

If $f_\lambda^\ell(L(z_1,z_2))\subset f_\lambda^{-1}(B_\lambda^L)$,
then the turning $T(f_\lambda^\ell(z_1), f_\lambda^\ell(z_2))$ is bounded by a constant $C_0$ since
$\Omega_\lambda^{\tau^{\ell}(\theta)}\cap B_\lambda$ is an analytic
curve. By Koebe distortion theorem,
$$T(z_1,z_2)\leq C_1T(f_\lambda^\ell(z_1), f_\lambda^\ell(z_2))\leq C_0C_1.$$

If $f_\lambda^\ell(L(z_1,z_2))\subset
\Omega_\lambda^{\tau^{\ell}(\theta)}\setminus
f_\lambda^{-1}(B_\lambda^{2L})$, then there exist two points
$u_1,u_2\in \overline{f_{\lambda}^\ell(L(z_1,z_2))}$ with ${\rm
diam}(f_{\lambda}^{\ell+1}(L(z_1,z_2)))=|f_{\lambda}(u_1)-f_{\lambda}(u_2)|\geq\delta_0/2$. Note that there is a constant $C_2>1$ such that for any $v_1,v_2\in \mathbb{\widehat{C}}\setminus f_\lambda^{-1}(B_\lambda^{2L})$, there is a smooth curve $\gamma(v_1,v_2)$ in $\mathbb{\widehat{C}}\setminus f_\lambda^{-1}(B_\lambda^{2L})$ connecting $v_1$ with $v_2$, with Euclidean length smaller than $C_2|v_1-v_2|$.
Thus
$$|f_{\lambda}(u_1)-f_{\lambda}(u_2)|=|\int_{\gamma(u_1,u_2)}f_{\lambda}'(z)dz|\leq\int_{\gamma(u_1,u_2)}|f_{\lambda}'(z)||dz|\leq C_2M|u_1-u_2|,$$
  where   $M=\max\{|f_{\lambda}'(z)|; z\in
\mathbb{\widehat{C}}\setminus f_\lambda^{-1}(B_\lambda^{2L})\}$.
It turns out that
$$\delta_0/(2C_2M)\leq {\rm
diam}(f_\lambda^\ell(L(z_1,z_2)))= {\rm
diam}(L(f_\lambda^\ell(z_1),f_\lambda^\ell(z_2)))<\delta_0/2.$$
 It follows (from the above fact) that there is a constant $c=c(\delta_0/(2C_2M))>0$ such that $|f_\lambda^\ell(z_1)-f_\lambda^\ell(z_2)|\geq c$.
 By Koebe distortion theorem,
$$T(z_1,z_2)\leq C_1 T(f_\lambda^\ell(z_1), f_\lambda^\ell(z_2))\leq\frac{C_1\delta_0}{2c}.$$

\textbf{Case 2. $L(z_1,z_2)\cap O_\lambda\neq\emptyset$}. In that case, there is
a smallest integer $\ell>0$  such that $0\in
f_\lambda^\ell(L(z_1,z_2))$. If ${\rm
diam}(f_\lambda^\ell(L(z_1,z_2)))<\delta_0/2<\min_{\zeta\in
\partial T_\lambda}|\zeta|/2$ (we may assume $\delta_0<\min_{\zeta\in
\partial T_\lambda}|\zeta|$), then
$f_\lambda^\ell(L(z_1,z_2))$ is contained in $\{|z|<\min_{\zeta\in
\partial T_\lambda}|\zeta|/2\}$ and the turning $T(f_\lambda^\ell(z_1), f_\lambda^\ell(z_2))$ is
bounded by a constant $C_0$. By Koebe distortion theorem,
$$T(z_1,z_2)\leq C_1T(f_\lambda^\ell(z_1), f_\lambda^\ell(z_2))\leq
C_0C_1.$$

If ${\rm diam}(f_\lambda^\ell(L(z_1,z_2)))\geq\delta_0/2$, then there
is an integer $m<\ell$ with ${\rm
diam}(f_\lambda^m(L(z_1,z_2)))<\delta_0/2$ and ${\rm
diam}(f_{\lambda}^{m+1}(L(z_1,z_2)))\geq\delta_0/2$.
With the same argument as that in Case 1, we conclude that $T(z_1,z_2)$ is bounded.
\end{proof}

Let $\Theta_{per}$ be a subset  of $\Theta\setminus \{1,1/2\}$, consisting of all periodic angles under the map $\tau$. One may verify that
 $\Theta_{per}$ is a dense subset of $\Theta$.

\begin{thm}[Cut rays with real parameters]\label{cutreal} For  any $\lambda\in(0,+\infty)$ and any  angle  $\theta\in\Theta_{per}$ with itinerary
 $(s_0,s_1,s_2,\cdots)$, the
set $$\Omega_\lambda^\theta:=\bigcap_{k\geq0} f_\lambda^{-k}((S_{s_k}^\lambda\cup S_{-s_k}^\lambda)\setminus\mathbb{R}^*)$$
is a Jordan curve intersecting the Julia set $J(f_\lambda)$ in
a Cantor set, where $\mathbb{R}^*=\mathbb{R}\setminus\{0\}$.
Moreover, if $\mathcal{F}_0\ni \lambda_j\rightarrow\lambda\in(0,+\infty)$,
 then $\Omega_{\lambda_j}^\theta\rightarrow \Omega_{\lambda}^\theta$ in Hausdorff topology.
\end{thm}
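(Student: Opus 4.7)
My plan is to realize $\Omega_{\lambda^*}^\theta$, for $\lambda^*\in(0,+\infty)$, as the Hausdorff limit of $\Omega_{\lambda_j}^\theta$ along any sequence $\lambda_j\in\mathcal{F}$ with $\lambda_j\to\lambda^*$. By Theorem~\ref{2a} and Lemma~\ref{3ef}, each $\Omega_{\lambda_j}^\theta$ is a quasicircle, so the task is to show that these quasicircles converge in Hausdorff topology to a Jordan curve that coincides with the set described in the statement.

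The first step is to extend the holomorphic motion of Lemma~\ref{3e} continuously to $\lambda^*$. The subtlety is that $\Upsilon_\lambda$, the common domain of the inverse branches $h_\epsilon^\lambda$, varies with $\lambda$: for $\lambda\in\mathcal{F}$ the two critical value rays sit off the real axis, while for $\lambda^*>0$ they lie on $\mathbb{R}$, so $\Upsilon_{\lambda^*}=\widehat{\mathbb{C}}\setminus([v_{\lambda^*}^+,+\infty)\cup(-\infty,v_{\lambda^*}^-])$. Nevertheless, continuous variation of $\sqrt{\lambda}$ ensures $h_\epsilon^\lambda(w)\to h_\epsilon^{\lambda^*}(w)$ pointwise for $w\in\Upsilon_{\lambda^*}$, and the B\"ottcher coordinate $\phi_\lambda$ and its inverse also have continuous limits on compacts in their domains. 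Hence for every $z$ in the dense subset $(\Omega_{\lambda_0}^\theta\setminus O_{\lambda_0})\cap F(f_{\lambda_0})$ the composition formula defining $h(\lambda,z)$ extends continuously to $\lambda=\lambda^*$; since every forward iterate of the limit point lies in the open part of the correct sector $S_{s_k}^{\lambda^*}\cup S_{-s_k}^{\lambda^*}$ (hence off $\mathbb{R}^*$), the limit belongs to $\bigcap_{k\geq 0}f_{\lambda^*}^{-k}((S_{s_k}^{\lambda^*}\cup S_{-s_k}^{\lambda^*})\setminus\mathbb{R}^*)$. This is precisely why the statement is formulated with the removal of $\mathbb{R}^*$.

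The second step is to check that the quasicircle dilatations of $\Omega_{\lambda_j}^\theta$ remain uniformly bounded as $\lambda_j\to\lambda^*$. A careful re-reading of the proof of Lemma~\ref{3ef} shows that each of the constants $\delta_0,C_0,C_1,C_2,M,c$ entering the dilatation bound is controlled by Koebe distortion along proper inverse branches, the supremum of $|f_\lambda'|$ off $f_\lambda^{-1}(B_\lambda^{2L})$, and the separation of the finite forward orbit of the cut ray from the free critical values $v_\lambda^\pm$. All of these vary continuously in $\lambda$ on any compact subset of $\mathbb{C}^*$ bounded away from $0$; hence the dilatations are uniformly bounded for $\lambda_j$ near $\lambda^*$, and by compactness of uniformly quasiconformal parameterizations any subsequence of $(\Omega_{\lambda_j}^\theta)$ admits a further Hausdorff-convergent subsequence whose limit $K$ is automatically a Jordan curve.

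Finally, I would identify $K=\Omega_{\lambda^*}^\theta$ and conclude. The inclusion $K\subseteq\Omega_{\lambda^*}^\theta$ is immediate from sectorial continuity combined with the extended holomorphic motion of Step~1, which places each point of $K\setminus\{0,\infty\}$ in the open sector at every iterate. The reverse inclusion uses the contracting iterated function system $\{h^{\lambda^*}_{s_k},h^{\lambda^*}_{-s_k}\}$: every point of the nested intersection is the limit of a backward orbit realizing the itinerary, and such orbits also parameterize $K$. The Cantor-set description of $J(f_{\lambda^*})\cap\Omega_{\lambda^*}^\theta$ follows from the resulting symbolic conjugacy with the $2$-shift on $\{s_k,-s_k\}^{\mathbb{N}}$, and uniqueness of the limit promotes subsequential convergence to full Hausdorff convergence. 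The main obstacle is the uniform quasicircle bound in Step~2: verifying that none of the constants of Lemma~\ref{3ef} degenerates as $\lambda_j\to\lambda^*$, which works precisely because $\lambda^*>0$ stays bounded away from $0$ and because the periodicity of $\theta$ keeps the cut ray at positive distance from the free critical values.
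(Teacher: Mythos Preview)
Your approach is genuinely different from the paper's, and Step~2 contains a real gap. The paper does \emph{not} obtain $\Omega_{\lambda^*}^\theta$ as a Hausdorff limit of quasicircles; instead it first constructs the cut ray at real $\lambda^*$ directly, by showing that for $\theta\in\Theta_{per}\setminus\{1,\tfrac12\}$ the iterated inverse branches $h^{\lambda^*}_{\epsilon_1}\circ\cdots\circ h^{\lambda^*}_{\epsilon_p}$ map the slit domain $Y_{\lambda^*}=\widehat{\mathbb{C}}\setminus([-\infty,v_{\lambda^*}^-]\cup[v_{\lambda^*}^+,+\infty]\cup\overline{B_{\lambda^*}^L})$ compactly into itself (this is where $\theta\neq1,\tfrac12$ is used: at least one $s_k\in\{\pm1,\dots,\pm(n-2)\}$). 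This compact containment is what forces the nested intersection to be a Jordan curve. Continuity then comes from extending the holomorphic motion of Lemma~\ref{3e} to a full two-sided neighborhood of $\lambda^*$, not merely to $\lambda^*$ continuously.

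The problem with your Step~2 is that the uniform lower bound on $\delta_0(\lambda_j)$ is exactly the delicate point, and your justification is circular. As $\lambda_j\to\lambda^*\in(0,+\infty)$ from $\mathcal{F}$, one has $\arg v_{\lambda_j}^+-\arg\ell_0^{\lambda_j}=\tfrac{n-1}{2n}\arg\lambda_j\to 0$, so $v_{\lambda_j}^+$ collapses onto the ray $\ell_0^{\lambda^*}=\mathbb{R}^+$, which is simultaneously the boundary of $S_{-(n-1)}^{\lambda^*}$. Whenever some $s_k=n-1$ (which you cannot exclude for general $\theta\in\Theta_{per}$), the cut ray $\Omega_{\lambda_j}^{\tau^k(\theta)}$ lives in $S_{n-1}^{\lambda_j}\cup S_{-(n-1)}^{\lambda_j}$, and you must rule out that it too approaches $v_{\lambda^*}^+$ along $\ell_0$. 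Asserting that the separation ``varies continuously in $\lambda$'' presupposes you already know the cut ray at $\lambda^*$ and that it stays off $\mathbb{R}^*$---which is the content of the theorem. The only way to close this is to prove the IFS compact containment in $Y_\lambda$, i.e., the paper's direct argument; and indeed your Step~3 already invokes that IFS, at which point the quasicircle-limit machinery of Step~2 becomes redundant. A cleaner fix is to follow the paper: extend the holomorphic motion of Lemma~\ref{3e} holomorphically across the real axis (the branches $h_\epsilon^\lambda$ depend analytically on $\lambda$ in a neighborhood of $\lambda^*$ once $\theta\in\Theta_{per}$), and let the $\lambda$-lemma hand you both the Jordan-curve property and the Hausdorff continuity simultaneously.
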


Here is a  remark. If $\lambda\in\mathcal{F}$, then $\cap_{k\geq0} f_\lambda^{-k}(S_{s_k}^\lambda\cup S_{-s_k}^\lambda)=\cap_{k\geq0} f_\lambda^{-k}((S_{s_k}^\lambda\cup S_{-s_k}^\lambda)\setminus\mathbb{R}^*)$, so the latter is also a reasonable definition of cut rays. However,
if $\lambda\in(0,+\infty)$, the set $\cap_{k\geq0} f_\lambda^{-k}(S_{s_k}^\lambda\cup S_{-s_k}^\lambda)$ is not  a Jordan curve in general.

The proof of Theorem \ref{cutreal} is essentially the same as that of Proposition 3.9 in \cite{WQY}. We would like to mention the idea of the proof here.
Let $Y_\lambda=\mathbb{\widehat{C}}\setminus ([-\infty, v_\lambda^+]\cup [v_\lambda^+,+\infty]\cup \overline{B_\lambda^L})$ for some large $L>1$ and $p$ be the period of $\theta$. The itinerary of $\theta$ satisfies
$s_{p+k}=s_k$ for all $k\geq0$. Since $\theta\neq1,1/2$, one of $s_k$ will be in the set $\{\pm1,\cdots,\pm(n-2)\}$ and for any $k\geq0$ and any
$(\epsilon_1,\cdots,\epsilon_p)=(\pm s_k, \cdots, \pm s_{k+p-1})$, the set $h^\lambda_{\epsilon_1}\circ\cdots\circ
h^\lambda_{\epsilon_{p}}(Y_\lambda)$ is compactly contained in $Y_\lambda$ (one should note that if $\theta=1$ or 1/2, then $h^\lambda_{1}\circ\cdots\circ h^\lambda_{1}(Y_\lambda)$ is not compactly contained in $Y_\lambda$). Similar to the proof of Proposition 3.9 in \cite{WQY}, one can construct two sequence of
Jordan curves converging  to the boundaries of the two components of $\mathbb{\widehat{C}}-\Omega_\lambda^\theta$. In this way $\Omega_\lambda^\theta$ is locally connected.  One can show that its two complement
components share the same boundary, so $\Omega_\lambda^\theta$ is a Jordan curve.

 With the same proof as  Lemma \ref{3e}, one can show that if $\theta\in\Theta_{per}$, then the cut ray $\Omega_\lambda^\theta$ is a holomorphic motion in a neighborhood of the real and positive axis. This yields  the continuity of cut rays. We omit the details.

\begin{pro}[Preimages of cut ray, \cite{WQY}, Prop 3.5]\label{23g} For any $\lambda\in\mathcal{F}_0$ and any
$\theta\in\Theta_{per}$, suppose that $(\Omega_\lambda^\theta-\{0,\infty\})\cap(\cup_{1\leq k\leq N}f_\lambda^k(C_\lambda))=\emptyset$
for some $N\geq1$. Then, for any $\alpha\in
\cup_{0\leq k\leq N}\tau^{-k}(\theta)$, there is a unique Jordan curve
$\Omega_\lambda^\alpha$ (or $\Omega_\lambda^{\alpha+1/2}$) containing $0$ and
$\infty$, such that  $f_\lambda(\Omega_\lambda^\alpha)=\Omega_\lambda^{\tau(\alpha)}$
and $R_\lambda(\alpha)\cup R_\lambda(\alpha+1/2)\subset \Omega_\lambda^\alpha\cap B_\lambda$.
\end{pro}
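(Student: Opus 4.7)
The plan is to construct the curves $\Omega_\lambda^\alpha$ by induction on the level $k$ at which $\alpha\in\tau^{-k}(\theta)$, with $0\leq k\leq N$. The base case $k=0$ is supplied directly by Theorem~\ref{2a} (or Theorem~\ref{cutreal} for $\lambda\in(0,+\infty)$). For the inductive step, I assume that for every $j\leq k<N$ and every $\beta\in\tau^{-j}(\theta)$, a Jordan curve $\Omega_\lambda^\beta$ through $0,\infty$ has been built, contained away from these points in the open sectors $\mathrm{int}(S_{s_0}^\lambda\cup S_{-s_0}^\lambda)$ dictated by the itinerary $(s_0,s_1,\ldots)$ of $\beta$, and satisfying $f_\lambda(\Omega_\lambda^\beta)=\Omega_\lambda^{\tau(\beta)}$.

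The construction at level $k+1$ then proceeds in two substeps. First, I verify that each $\Omega_\lambda^\beta\setminus\{0,\infty\}$ with $\beta\in\tau^{-j}(\theta)$ and $j\leq k$ avoids the critical values $v_\lambda^{\pm}$: if $v_\lambda^{\epsilon}$ lay in $\Omega_\lambda^\beta\setminus\{0,\infty\}$, iterating the semiconjugacy $f_\lambda(\Omega_\lambda^\gamma)=\Omega_\lambda^{\tau(\gamma)}$ would force $f_\lambda^j(v_\lambda^{\epsilon})\in\Omega_\lambda^\theta\setminus\{0,\infty\}$, but $f_\lambda^j(v_\lambda^{\epsilon})\in f_\lambda^{j+1}(C_\lambda)$ with $j+1\leq N$, contradicting the hypothesis. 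Second, given $\alpha\in\tau^{-(k+1)}(\theta)$ with itinerary $(s_{-1},s_0,\ldots)$ and $s_{-1}\in\{\pm1,\ldots,\pm(n-1)\}$, I observe that $\Omega_\lambda^{\tau(\alpha)}\setminus\{0,\infty\}$ lies in $\mathrm{int}(S_{s_0}^\lambda\cup S_{-s_0}^\lambda)$ with $s_0\notin\{0,n\}$ and avoids the critical values, and hence lies in $\Upsilon_\lambda$. Its two arcs can therefore be lifted through the conformal inverse branches $h_{\pm s_{-1}}^\lambda:\Upsilon_\lambda\to\mathrm{int}(S_{\pm s_{-1}}^\lambda)$, and the union of the four lifted arcs with $\{0,\infty\}$ defines the candidate $\Omega_\lambda^\alpha$. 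By construction $f_\lambda|_{\Omega_\lambda^\alpha}$ is two-to-one onto $\Omega_\lambda^{\tau(\alpha)}$; the inclusion $R_\lambda(\alpha)\cup R_\lambda(\alpha+1/2)\subset\Omega_\lambda^\alpha\cap B_\lambda$ follows from the B\"ottcher functional equation $\phi_\lambda\circ f_\lambda=\phi_\lambda^n$ combined with the inductive inclusion for $\Omega_\lambda^{\tau(\alpha)}$; and uniqueness up to the identification $\Omega_\lambda^\alpha=\Omega_\lambda^{\alpha+1/2}$ (forced by the $z\mapsto-z$ symmetry) holds because the requirement that the two external rays $R_\lambda(\alpha),R_\lambda(\alpha+1/2)$ sit inside the curve pins down the sector pair $\{s_{-1},-s_{-1}\}$ and within it the inverse branch is unique.

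The main technical obstacle is to verify that the four lifted arcs join continuously at $0$ and $\infty$ to produce a single Jordan curve, since the inverse branches $h_{\pm s_{-1}}^\lambda$ are not defined at these points. This demands a careful local analysis at the degree-$n$ critical points $0$ and $\infty$: each lift must approach these points along specific boundary half-lines $\ell_{s_{-1}},\ell_{s_{-1}+1}$ (and their negatives $\ell_{s_{-1}+n},\ell_{s_{-1}+n+1}$), and the cyclic arrangement of the $2n$ sectors around $0$ and $\infty$, together with the local branched-cover structure of $f_\lambda$, must be invoked to show that the arrival directions match up and that the arcs close into a single simple closed curve rather than splitting into several components.
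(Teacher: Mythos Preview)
The paper does not give a self-contained proof of this proposition; it simply cites Proposition~3.5 of \cite{WQY} and remarks that the argument there applies verbatim. Your inductive pull-back strategy is the natural one and is almost certainly what \cite{WQY} does: avoid the critical values, take a component of $f_\lambda^{-1}(\Omega_\lambda^{\tau(\alpha)})$, and check it is a Jordan curve through $0,\infty$ carrying the correct external rays.

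There is, however, a genuine gap in your inductive step. You assume that the first symbol $s_{-1}$ of $\alpha$ lies in $\{\pm1,\ldots,\pm(n-1)\}$ and then pull back via the inverse branches $h_{\pm s_{-1}}^\lambda:\Upsilon_\lambda\to\mathrm{int}(S_{\pm s_{-1}}^\lambda)$. But the proposition is stated for \emph{all} $\alpha\in\bigcup_{0\le k\le N}\tau^{-k}(\theta)$, and such $\alpha$ can perfectly well lie in $\Theta_0\cup\Theta_n$; for these angles your construction is undefined, since the sectors $S_0^\lambda,S_n^\lambda$ contain the critical points in $C_\lambda$ and $f_\lambda$ is not univalent there. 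Worse, your inductive hypothesis that $\Omega_\lambda^\beta\setminus\{0,\infty\}\subset\mathrm{int}(S_{s_0}^\lambda\cup S_{-s_0}^\lambda)$ with $s_0\notin\{0,n\}$ cannot be maintained once you build such a curve, so the next pull-back step breaks: you need $\Omega_\lambda^{\tau(\alpha)}\setminus\{0,\infty\}\subset\Upsilon_\lambda$ to apply $h_{\pm s_{-1}}^\lambda$, and this fails if $\Omega_\lambda^{\tau(\alpha)}$ sits in $S_0^\lambda\cup S_n^\lambda$.

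The fix is to drop the sector-by-sector inverse branches and argue globally. Once you know $\Omega_\lambda^{\tau(\alpha)}\setminus\{0,\infty\}$ avoids $v_\lambda^\pm$ (your argument for this is correct), the full preimage $f_\lambda^{-1}(\Omega_\lambda^{\tau(\alpha)})$ is an unbranched $2n$-fold cover of $\Omega_\lambda^{\tau(\alpha)}\setminus\{0,\infty\}$ together with $\{0,\infty\}$, at each of which $2n$ arc-ends meet (local degree $n$). The rotational symmetry $f_\lambda(e^{\pi i/n}z)=-f_\lambda(z)$ combined with $-\Omega_\lambda^{\tau(\alpha)}=\Omega_\lambda^{\tau(\alpha)}$ shows the preimage is invariant under rotation by $e^{\pi i/n}$, and this forces it to decompose into $n$ Jordan curves through $0,\infty$, cyclically permuted by the rotation. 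Exactly one of these contains $R_\lambda(\alpha)\cup R_\lambda(\alpha+1/2)$, and that one is $\Omega_\lambda^\alpha$. This argument works uniformly regardless of which sector $\alpha$ sits in, and it also absorbs your ``technical obstacle'' about the arcs matching up at $0,\infty$.
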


The Jordan curve $\Omega^\alpha_\lambda$ defined in Proposition \ref{23g} is also called a cut ray. We remark that the statement of Proposition \ref{23g}
is slightly different from  Prop 3.5 in \cite{WQY}, but their proofs are same.

\begin{rmk} \label{3gg} The cut ray $\Omega^\alpha_\lambda$ 
defined by Proposition \ref{23g} satisfies:


1. There is a neighborhood $\mathcal{U}$
 of $\lambda$, such that for all $u\in\mathcal{U}\cap \mathcal{F}_0$, $(\Omega_u^\theta-\{0,\infty\})\cap(\cup_{1\leq k\leq N}f_u^k(C_u))=\emptyset$ (this implies the cut ray $\Omega_u^\alpha$ exists). By Lemma \ref{3e} and Theorem \ref{cutreal}, the cut ray  $\Omega_u^\alpha$ moves continuously with respect to $u\in \mathcal{U}\cap \mathcal{F}_0$.

 2.  $\Omega_\lambda^\alpha$ is a quasi-circle (by Lemma \ref{3ef} or with the same proof).
\end{rmk}

\begin{lem}\label{3kk} For any $\lambda\in \mathcal{F}_0$ and any two different external rays $R_{\lambda}(t_1)$ and  $R_{\lambda}(t_2)$, there is a cut ray $\Omega_\lambda^\alpha$ with $\alpha\in \cup_{k\geq0}\tau^{-k}(\Theta_{per})$ separating them.
\end{lem}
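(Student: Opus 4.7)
The plan is to reduce the separation of the two external rays to a combinatorial condition on angles, use the density of iterated preimages under $\tau$ to produce a candidate $\alpha$, and invoke Proposition~\ref{23g} to guarantee that the corresponding cut ray exists.

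First I observe that any cut ray $\Omega_\lambda^\alpha$, when defined, is a Jordan curve through $0,\infty$ containing $R_\lambda(\alpha)\cup R_\lambda(\alpha+1/2)$; its two complementary components intersect $B_\lambda$ along the external rays of angles in the open arcs $(\alpha,\alpha+1/2)$ and $(\alpha+1/2,\alpha)$ of $\mathbb{S}$. Thus $\Omega_\lambda^\alpha$ separates $R_\lambda(t_1)$ and $R_\lambda(t_2)$ if and only if the pair $\{\alpha,\alpha+1/2\}$ separates $\{t_1,t_2\}$ on $\mathbb{S}$. Writing $d:=\min(|t_1-t_2|,\,1-|t_1-t_2|)$, the acceptable $\alpha$'s form two open arcs of length $d$ on $\mathbb{S}$. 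I then fix an integer $k\geq 1$ with $n^{-k}<d$; for any $\theta_0\in\mathbb{S}$ the set $\tau^{-k}(\theta_0)$ consists of $n^k$ equidistributed points with gap $n^{-k}<d$, hence meets each separating arc, yielding a candidate $\alpha\in\tau^{-k}(\theta_0)\subset\bigcup_{j\geq 0}\tau^{-j}(\Theta_{per})$ as soon as $\theta_0$ is chosen in $\Theta_{per}$.

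It remains to arrange that $\Omega_\lambda^\alpha$ is actually defined. By Proposition~\ref{23g} this reduces to choosing $\theta_0\in\Theta_{per}$ so that $\Omega_\lambda^{\theta_0}$ avoids the finite set $F_k:=\bigcup_{j=1}^{k}f_\lambda^j(C_\lambda)\setminus\{0,\infty\}$. Now $f_\lambda(C_\lambda)=\{v_\lambda^\pm\}$ lies in the interior of $S_0^\lambda\cup S_n^\lambda$, while every cut ray sits in $\bigcup_{1\leq|s|\leq n-1}S_s^\lambda$; so the $j=1$ level is automatically disjoint from every cut ray, and only the finitely many points with $j\geq 2$ in $F_k$ must be dodged. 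For a fixed such $z\in F_k$, the condition $z\in\Omega_\lambda^{\theta_0}$ forces the absolute-value itinerary $(|t_j|)_j$ of the orbit of $z$ to be purely periodic and to coincide with $(|s_j|)_j$, the itinerary of $\theta_0$; the explicit formula
\[
\theta_0=\frac{1}{2}\Bigl(\frac{\chi(s_0)}{n}+\sum_{j\geq 1}\frac{|s_j|}{n^{j+1}}\Bigr)
\]
then recovers $\theta_0$ up to replacing it by $\theta_0+1/2$, giving at most two periodic angles per $z$. Summing over $z\in F_k$, the bad set of $\theta_0$'s is finite. Since $\Theta_{per}$ is countably infinite, a good $\theta_0$ exists; Proposition~\ref{23g} then supplies the cut ray $\Omega_\lambda^\alpha$, which by the first paragraph separates $R_\lambda(t_1)$ from $R_\lambda(t_2)$.

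The main subtle point is the finiteness of the bad set of $\theta_0\in\Theta_{per}$, which rests on the rigidity of the $\tau$-itinerary encoding: each purely periodic absolute-value itinerary is realized by exactly one pair $\{\theta_0,\theta_0+1/2\}$ of angles in $\Theta$, so only periodic points of $\tau$ with tightly constrained symbolic dynamics can place a given dynamical point on their cut ray.
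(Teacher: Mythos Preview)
Your proof is correct and follows essentially the same strategy as the paper: pick a periodic base angle whose cut ray avoids the relevant part of the critical orbit, then use density of its $\tau$-preimages to land in the right arc between $t_1$ and $t_2$. The only real difference is the order of choices: the paper first picks $\theta\in\Theta_{per}$ avoiding the \emph{entire} forward critical orbit and then invokes density, whereas you first fix the depth $k$ (via $n^{-k}<d$) and then only need $\theta_0$ to avoid the finite set $\bigcup_{1\le j\le k}f_\lambda^j(C_\lambda)$. Your ordering makes the finiteness of the bad set of $\theta_0$'s completely transparent (at most two per point, via the itinerary formula), while the paper's one-line ``Since $\Theta_{per}$ is an infinite set, we can find\ldots'' leaves that verification to the reader. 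Both are fine; yours is the more self-contained write-up.
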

\begin{proof} Since $\Theta_{per}$ is an infinite set, we can find an  angle $\theta\in \Theta_{per}$ such that
 $(\Omega_\lambda^\theta-\{0,\infty\})\cap(\cup_{k\geq1}f_\lambda^k(C_\lambda))=\emptyset$. The preimages $\cup_{k\geq0}\tau^{-k}(\theta)$ of $\theta$ are dense in the unit circle, so there is $\alpha\in\cup_{k\geq0}\tau^{-k}(\theta)$ lying in between $t_1$ and $t_2$. Then  $R_{\lambda}(t_1)$ and  $R_{\lambda}(t_2)$ are contained in different components of $\mathbb{\widehat{C}}-\Omega_\lambda^\alpha$.
\end{proof}

\section{$\partial\mathcal{H}_0$ is a Jordan curve}\label{0Jordan}

In this section, we will show that $\partial\mathcal{H}_0$ is a Jordan curve.  
We begin with a dynamical result for our purpose.
 To prove Theorem \ref{11a} in \cite{WQY},  we reduce the situation to the following:

\begin{thm}[Backward contraction on $\partial B_\lambda$,\cite{WQY}] \label{4a}  Suppose that $\lambda\in \mathbb{C}^*\setminus\mathcal{H}_0$ and $\partial B_\lambda$ contains neither
 a parabolic point nor the recurrent critical set $C_\lambda$, then $f_\lambda$ satisfies the following property on  $\partial B_\lambda$:  there exist three constants
$\delta_0>0$, $C>0$ and $0<\rho<1$ such that for any
$0<\delta<\delta_0$, any $z\in\partial B_\lambda$, any integer
$k\geq0$ and any component $U_k(z)$ of $f_\lambda^{-k}(B(z,\delta))$
that intersects with $\partial B_\lambda$, $U_k(z)$ is simply
connected with Euclidean diameter ${\rm diam}(U_k(z))\leq C \delta \rho^k$.
\end{thm}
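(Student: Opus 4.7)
The plan is to prove Theorem 4.1 in two stages: first establish that the pullbacks $U_k(z)$ are simply connected with uniformly bounded degree (a Mañé--type statement), then upgrade the pointwise shrinking of pullbacks to uniform exponential contraction by constructing a hyperbolic open set on which $f_\lambda$ is strictly expanding on $\partial B_\lambda$. The two hypotheses --- no parabolic periodic point on $\partial B_\lambda$ and no recurrent critical set on $\partial B_\lambda$ --- are precisely what makes each of these steps work.

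For the first step, I would use the fact that $\partial B_\lambda$ is compact and carries no parabolic cycle or recurrent critical point. Choose $\delta_{0}>0$ small enough that for every $z\in\partial B_{\lambda}$ the closed disk $\overline{B(z,3\delta_{0})}$ meets the post-critical set $P_{\lambda}=\overline{\bigcup_{k\geq 1}f_\lambda^{k}(C_\lambda)}$ in at most boundedly many points, and so that every pullback $U_{k}(z)$ meeting $\partial B_{\lambda}$ contains at most a uniformly bounded number of critical points of $f_\lambda^{k}$ (here one uses that if $C_\lambda\subset\partial B_\lambda$ then non-recurrence gives a definite spherical distance between $C_\lambda$ and $\bigcup_{k\geq 1}f_\lambda^{k}(C_\lambda)$, while if $C_\lambda\not\subset\partial B_\lambda$ the orbit of $C_\lambda$ is bounded away from $\partial B_\lambda$ in a controlled way). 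The Riemann--Hurwitz formula then forces $U_{k}(z)$ to be simply connected and the degree of $f_\lambda^{k}:U_{k}(z)\to B(z,\delta)$ to be bounded by some $D$ independent of $z,k,\delta$.

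For the second step, I would construct an open hyperbolic set $V\subset\mathbb{\widehat{C}}$ containing $\partial B_\lambda$ in its interior, forward-invariant under the inverse branches of $f_\lambda$ (i.e.\ $f_\lambda^{-1}(V)\subset V$), and whose complement traps all attracting/parabolic cycles and a neighborhood of the accumulation set of the critical orbit. The absence of parabolic cycles on $\partial B_\lambda$ and of recurrent critical points on $\partial B_\lambda$ allows $V$ to have positive hyperbolic diameter near $\partial B_\lambda$. By the Schwarz--Pick lemma the inverse branches are non-expanding in the hyperbolic metric $\rho_V$, and using compactness of $\partial B_\lambda \subset V$ together with the fact that $f_\lambda$ has no indifferent orbit on $\partial B_\lambda$, one obtains uniform expansion $\|Df_\lambda(z)\|_{\rho_V}\geq \mu>1$ for $z$ in a neighborhood of $\partial B_\lambda$. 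Combined with the bounded-degree output of step one, this yields $\mathrm{diam}_{\rho_{V}}(U_{k}(z))\leq C'\delta\,\mu^{-k}$. Since the spherical/Euclidean and hyperbolic metrics are comparable on any compact subset of $V$ and all large-$k$ pullbacks eventually enter such a compact set, we recover $\mathrm{diam}(U_{k}(z))\leq C\delta\rho^{k}$ with $\rho=1/\mu$.

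The main obstacle will be the construction of the hyperbolic set $V$ in a uniform way and the verification of strict expansion on $\partial B_\lambda$ in the hyperbolic metric. This is delicate because one must treat separately the case $C_\lambda\subset\partial B_\lambda$ (where non-recurrence provides a gap between $C_\lambda$ and its forward orbit, which can be used to slit a neighborhood of $\partial B_\lambda$ away from the critical orbit) and the case $C_\lambda\not\subset\partial B_\lambda$ (where the whole orbit of $C_\lambda$ lies in the Fatou set or in a part of $J(f_\lambda)$ disjoint from $\partial B_\lambda$). In both cases the absence of parabolic points on $\partial B_\lambda$ is needed to rule out the Fatou-flower obstruction that would otherwise make the expansion rate degenerate to $1$. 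Once $V$ and $\mu>1$ are in hand, the remainder of the argument is a routine combination of Schwarz--Pick, Koebe distortion (to control the bounded-degree covering on pullbacks), and the metric comparison between $\rho_V$ and the Euclidean metric.
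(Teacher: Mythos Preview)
The paper does not prove this statement; it quotes it from \cite{WQY} and only remarks that the proof there is obtained by combining two results based on Yoccoz puzzle theory (nested puzzle pieces around points of $\partial B_\lambda$, with definite moduli of the separating annuli, which yields the exponential contraction via Gr\"otzsch). Your proposal is therefore a genuinely different route: a Ma\~n\'e--type bounded-degree argument followed by a hyperbolic-metric expansion argument.

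Step~1 of your plan is essentially correct and is the standard Ma\~n\'e set-up; non-recurrence of $C_\lambda$ on $\partial B_\lambda$ (together with the absence of parabolic points) does give simple connectivity and a uniform bound $D$ on $\deg(f_\lambda^{k}:U_k(z)\to B(z,\delta))$.

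Step~2, however, has a real gap in the case $C_\lambda\subset\partial B_\lambda$ (non-recurrent), which is exactly the delicate case the theorem is designed to cover. Two incompatible requirements are imposed on the set $V$: you ask that $V$ contain $\partial B_\lambda$ in its interior, and simultaneously that $\widehat{\mathbb C}\setminus V$ contain a neighborhood of the accumulation set of the critical orbit. But when $C_\lambda\subset\partial B_\lambda$, the forward orbit of $C_\lambda$ lies entirely in $\partial B_\lambda$, and its closure can be an uncountable subset of $\partial B_\lambda$; no such $V$ exists. Even setting this aside, the assertion ``$\|Df_\lambda(z)\|_{\rho_V}\geq\mu>1$ for $z$ in a neighborhood of $\partial B_\lambda$'' is false as stated, since $Df_\lambda$ vanishes at each $c\in C_\lambda\subset\partial B_\lambda$; there is no smooth metric in which $f_\lambda$ is uniformly expanding on $\partial B_\lambda$ in this case. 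The ``slit'' you allude to would have to remove from $V$ a neighborhood of the entire postcritical set in $\partial B_\lambda$, and then the hyperbolic metric of $V$ blows up along $\partial B_\lambda$, so the comparison with the Euclidean metric in the last line breaks down.

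What actually works (and what the puzzle argument in \cite{WQY} encodes) is not a single expanding metric but a telescoping/nested-annuli mechanism: along any backward orbit in $\partial B_\lambda$, critical encounters occur with bounded frequency (your Step~1), and between two consecutive critical encounters one has a univalent pullback with Koebe control over a definite annulus. Summing the resulting moduli gives a linearly growing total modulus, hence exponential shrinking. If you want to avoid puzzles, this is the argument to write; the hyperbolic-metric shortcut as you have sketched it does not go through when $C_\lambda\subset\partial B_\lambda$.
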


 We refer the reader to
\cite{WQY} for a detailed proof based on  Yoccoz puzzle theory. (To obtain Theorem \ref{4a}, one should combine two results in  \cite{WQY}: Theorem 1.2 in Section 7.5
and Proposition 6.1 in Section 6.)

\begin{lem} \label{3a} Suppose that
$J(f_\lambda)$ is not a Cantor set. If $\partial B_\lambda$ contains
neither a critical point nor a parabolic cycle, then there exist an integer
$k\geq 1$ and two topological disks $U_\lambda,V_\lambda$ with
$\overline{B_\lambda}\subset V_\lambda\subset U_\lambda$, such that
$f^k_\lambda: V_\lambda\rightarrow U_\lambda$ is a polynomial-like
map of degree $n^k$ with only one critical point $\infty$.
\end{lem}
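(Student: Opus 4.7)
The plan is to build the polynomial-like restriction by pulling back a small Jordan neighborhood of $\overline{B_\lambda}$ under a high iterate $f_\lambda^k$, exploiting the exponential backward contraction on $\partial B_\lambda$ provided by Theorem~\ref{4a}. First I note that since $J(f_\lambda)$ is not a Cantor set, $\lambda\notin\mathcal{H}_0$, so $B_\lambda\ne T_\lambda$ and $\{0\}\cup C_\lambda$ is disjoint from $\overline{B_\lambda}$. Together with the hypothesis that $\partial B_\lambda$ contains neither a critical point nor a parabolic cycle, Theorem~\ref{11a} gives that $\partial B_\lambda$ is a quasi-circle, so Theorem~\ref{4a} supplies constants $\delta_0,C>0$ and $\rho\in(0,1)$ bounding the diameter of any component of $f_\lambda^{-k}(B(z,\delta))$ meeting $\partial B_\lambda$ by $C\delta\rho^k$.

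I then fix an integer $k$, to be taken large. For such $k$, the critical points of $f_\lambda^k$ other than $\infty$ form a finite set $\Sigma_k:=\bigcup_{i=0}^{k-1}f_\lambda^{-i}(\{0\}\cup C_\lambda)\setminus\{\infty\}$, and $f_\lambda^{-k}(B_\lambda)\setminus B_\lambda$ is a finite union of Fatou components $F_1,\dots,F_N$, each at positive distance from $\overline{B_\lambda}$. I choose $\delta\in(0,\delta_0)$ so small that $C\delta\rho^k$ is less than half the minimum of $\mathrm{dist}(\Sigma_k,\overline{B_\lambda})$ and the distances $\mathrm{dist}(\overline{F_j},\overline{B_\lambda})$. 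Then I take $U_\lambda$ to be a smooth Jordan topological disk with $\overline{B_\lambda}\subset U_\lambda$ contained in the $(\delta/2)$-neighborhood of $\overline{B_\lambda}$, and define $V_\lambda$ as the connected component of $f_\lambda^{-k}(U_\lambda)$ containing $\overline{B_\lambda}$.

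Covering $\partial B_\lambda$ by finitely many disks $B(z_i,\delta/4)$ centered on $\partial B_\lambda$, any point of $V_\lambda\setminus\overline{B_\lambda}$ lies in some component of $f_\lambda^{-k}(B(z_i,\delta/2))$ meeting $\partial B_\lambda$; by Theorem~\ref{4a} this component has diameter at most $C(\delta/2)\rho^k$, so $V_\lambda$ lies in a thin tube around $\overline{B_\lambda}$. By the choice of $\delta$, I conclude $V_\lambda\subset\subset U_\lambda$, $V_\lambda\cap\Sigma_k=\emptyset$, and $V_\lambda\cap F_j=\emptyset$ for each $j$. Hence $f_\lambda^k\colon V_\lambda\to U_\lambda$ is proper holomorphic with $\infty$ its only critical point in $V_\lambda$ (local degree $n^k$), and since the only Fatou component in $V_\lambda\cap f_\lambda^{-k}(B_\lambda)$ is $B_\lambda$ itself, the degree of the map is exactly $n^k$. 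Riemann--Hurwitz then gives $\chi(V_\lambda)=n^k-(n^k-1)=1$, so $V_\lambda$ is a topological disk, completing the polynomial-like structure.

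The main difficulty will be the joint choice of $k$ and $\delta$: the pullback $V_\lambda$ has to be thin enough to (i) be compactly contained in $U_\lambda$, (ii) miss the finitely many extra critical points of $f_\lambda^k$ in $\Sigma_k$, and (iii) miss the intruding Fatou components $F_j$ of $f_\lambda^{-k}(B_\lambda)$. Each of these conditions is a positive distance threshold depending on $k$, and while these thresholds may shrink as $k$ grows, the exponential factor $\rho^k$ from Theorem~\ref{4a}, combined with the freedom to shrink $\delta$ after $k$ is fixed, always provides the needed margin.
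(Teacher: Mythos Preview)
Your argument is correct and follows the same route as the paper: both use the backward-contraction estimate of Theorem~\ref{4a} to produce a thin Jordan neighborhood $U_\lambda\supset\overline{B_\lambda}$ whose pullback $V_\lambda$ under a high iterate is compactly contained in $U_\lambda$ and misses every finite critical point, yielding the polynomial-like restriction. The paper organizes the pullback as an iteration of a fixed step $f_\lambda^{\ell}$---choosing $\ell$ with $C\rho^{\ell}<1$ and $\delta$ so that the collar $N_\delta$ avoids the finitely many critical points of $f_\lambda^{\ell}$, then pulling back the annulus $A_0=U_\lambda\setminus\overline{B_\lambda}$ repeatedly until its outer boundary falls below the inner radius of $U_\lambda$---while you take a single large iterate $f_\lambda^{k}$ and shrink $\delta$ afterwards; the substance is the same.

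One small point worth tightening: your sentence ``any point of $V_\lambda\setminus\overline{B_\lambda}$ lies in some component of $f_\lambda^{-k}(B(z_i,\delta/2))$ meeting $\partial B_\lambda$'' is the crux of the thin-tube claim and deserves a word of justification (a priori such a point could sit in a pullback component attached to some $\partial F_j$ rather than to $\partial B_\lambda$). The paper's iterated annulus picture makes this step slightly more transparent, since at every stage one is pulling back an annulus already known to lie in $N_\delta$ by an unbranched cover; in your one-shot version a short connectivity argument (walking along a path in $V_\lambda$ from the point back to $\partial B_\lambda$, and using that $N_\delta$ avoids $\Sigma_k$ and all $\overline{F_j}$) closes the gap.
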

\begin{proof}
The map $f_\lambda$ satisfies the assumptions in Theorem \ref{4a}. This guarantees the existence of three constants $\delta_0, C, \rho$.

Let $N_\delta$ be $\delta$-neighborhood of $\partial B_\lambda$,
defined as the set of all points whose Euclidean distance to $\partial
B_\lambda$ is smaller than $\delta$. We choose  an integer $\ell>0$
and a number $\delta<\delta_0$ such that $C\rho^\ell<1$ and
$(\cup_{0\leq j<\ell}f_\lambda^{-j}(C_\lambda))\cap
N_{\delta}=\emptyset$.

Given an Jordan curve $\gamma$, we define its partial distance to
$\partial B_\lambda$ by $\varpi(\gamma):=\max_{z\in \gamma}d(z,
\partial B_\lambda)$, where $d(\cdot,\cdot)$ is Euclidean distance. We choose a Jordan curve $\gamma_0\subset \mathbb{\widehat{C}}\setminus\overline{B}_\lambda$ with
$\varpi(\gamma_0)<\delta$. The annulus between $\gamma_0$ and
$\partial B_\lambda$ is denoted by $A_0$. Since $(\cup_{0\leq
j<\ell}f_\lambda^{-j}(C_\lambda))\cap N_{\delta}=\emptyset$, there is an
annular component of $f_\lambda^{-\ell}(A_0)$, say $A_1$, with
$\partial B_\lambda$ as one of its  boundary components. The other boundary curve
is denoted by $\gamma_1$. Theorem \ref{4a} implies
$\varpi(\gamma_1)\leq \varpi(\gamma_0) C\rho^\ell<\delta$. Continuing
inductively, for any $k\geq1$, there is an annular component of
$f_\lambda^{-\ell}(A_{k-1})$, say $A_{k}$,  whose boundary curves are
$\partial B_\lambda$ and $\gamma_{k}$. Then we have
$$\varpi(\gamma_k)/\varpi(\gamma_0)\leq C\rho^{k\ell}.$$

So we can choose $k_0 >0$ such that $\varpi(\gamma_{k_0})<\min_{z\in
\gamma_0}d(z,
\partial B_\lambda)$. Let $V_\lambda$ be the unbounded component of
$\mathbb{\widehat{C}}-\gamma_{k_0}$ and $U_\lambda$ be the
unbounded component of $\mathbb{\widehat{C}}-\gamma_{0}$. Then
$f_{\lambda}^{k_0\ell}: V_\lambda\rightarrow U_\lambda$ is a
polynomial-like map of degree $n^{k_0\ell}$, with only one critical
point $\infty$. It is actually quasiconformally conjugate to the power
map $z\mapsto z^{n^{k_0\ell}}$.
\end{proof}

\begin{lem} \label{3b} Suppose $\lambda\in \partial
\mathcal{H}_0$, then  $\partial B_\lambda$   contains either  the critical set
$C_\lambda$ or a   parabolic cycle of $f_\lambda$.
\end{lem}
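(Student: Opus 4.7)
The plan is to argue by contradiction, assuming $\partial B_\lambda$ contains neither a parabolic cycle of $f_\lambda$ nor any point of $C_\lambda$. Since $\mathcal{H}_0$ is open and $\lambda\in\partial\mathcal{H}_0$, we have $\lambda\notin\mathcal{H}_0$, so $v_\lambda^+\notin B_\lambda$, and consequently $C_\lambda\cap B_\lambda=\emptyset$; combined with the contradiction hypothesis this places $C_\lambda$ in the open set $\mathbb{\widehat{C}}\setminus\overline{B_\lambda}$. The hypotheses of Lemma \ref{3a} are therefore satisfied (note that $J(f_\lambda)$ is not a Cantor set, since $\lambda\notin\mathcal{H}_0$), producing a polynomial-like restriction $f_\lambda^k:V_\lambda\to U_\lambda$ of degree $n^k$ whose only critical point is $\infty$. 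Inspecting the construction in Lemma \ref{3a}, I would further arrange that $\overline{U_\lambda}$ is disjoint from the finite set $\bigcup_{0\le j<k}f_\lambda^{-j}(C_\lambda)$, which is possible because this set lies in $\mathbb{\widehat{C}}\setminus\overline{B_\lambda}$.

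Next I would invoke the standard persistence of polynomial-like maps (Douady--Hubbard): there is a neighborhood $W\subset\mathbb{C}^*$ of $\lambda$ and continuously varying domains $V_{\lambda'}\Subset U_{\lambda'}$ such that $f_{\lambda'}^k:V_{\lambda'}\to U_{\lambda'}$ is polynomial-like of degree $n^k$ for every $\lambda'\in W$. After shrinking $W$, continuity of the relevant preimages of $C_{\lambda'}$ ensures that $\overline{U_{\lambda'}}\cap\bigl(\bigcup_{0\le j<k}f_{\lambda'}^{-j}(C_{\lambda'})\bigr)=\emptyset$, so the only critical point of the polynomial-like map at $\lambda'$ remains $\infty$. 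Straightening then identifies this polynomial-like map with $z\mapsto z^{n^k}$, so its filled Julia set $K_{\lambda'}$ is a closed topological disk and $\partial K_{\lambda'}$ is a quasi-circle.

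To close the argument, I observe that every $z\in\partial K_{\lambda'}$ has $f_{\lambda'}^{jk}(z)\in\partial K_{\lambda'}$ for all $j\ge 0$, so its full $f_{\lambda'}$-orbit is contained in a compact subset of $\mathbb{C}$ and does not tend to $\infty$; in particular $\partial K_{\lambda'}\cap B_{\lambda'}=\emptyset$. Since $\lambda\in\partial\mathcal{H}_0$, the open set $W$ meets $\mathcal{H}_0$; picking any $\lambda'\in W\cap\mathcal{H}_0$, Theorem \ref{1c}(1) gives that $J(f_{\lambda'})$ is a Cantor set, and hyperbolicity gives $F(f_{\lambda'})=B_{\lambda'}$. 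Therefore the Jordan curve $\partial K_{\lambda'}$ would have to lie inside the totally disconnected $J(f_{\lambda'})$, which is impossible.

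The main obstacle is making the perturbation step rigorous: I need the polynomial-like domains $V_{\lambda'},U_{\lambda'}$ to vary continuously in such a way that all finite preimages of $C_{\lambda'}$ relevant to the iteration remain outside $\overline{U_{\lambda'}}$ throughout $W$. The strict separation at $\lambda$ together with continuity in $\lambda'$ of both $C_{\lambda'}$ and its preimages supplies this, but it is the place where an honest appeal to the Douady--Hubbard polynomial-like framework is essential to maintain the correct degree and critical-point count.
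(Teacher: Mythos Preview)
Your proof is correct and follows essentially the same route as the paper: contradiction via Lemma~\ref{3a}, then perturbation to nearby parameters to rule out a Cantor Julia set. The only cosmetic difference is in the perturbation step: rather than invoking the general Douady--Hubbard persistence theorem, the paper keeps $U_u=U_\lambda$ fixed, takes $V_u$ to be the component of $f_u^{-k}(U_\lambda)$ containing $\infty$, and uses holomorphic motion of $\partial V_u$ to ensure $V_u\Subset U_\lambda$---this sidesteps your stated ``main obstacle'' entirely and may be worth adopting.
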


\begin{proof} If
 $\partial B_\lambda$ contains neither the  critical set $C_\lambda$ nor a parabolic cycle,
then it follows from Lemma \ref{3a} that there exist an integer $k\geq
1$ and two topological disks $U_\lambda,V_\lambda$ with
$\overline{B_\lambda}\subset V_\lambda\subset U_\lambda$, such that
$f^k_\lambda: V_\lambda\rightarrow U_\lambda$ is a polynomial like
map of degree $n^k$ with only one critical point $\infty$. We may
assume that $\overline{U_\lambda}$ has no intersection with  $\cup_{0\leq
j<k}f_\lambda^{-j}(C_\lambda)$.

Then there is a neighborhood of $\mathcal{U}$ of $\lambda$, such
that for all $u\in \mathcal{U}$, the set  $\cup_{0\leq
j<k}f_u^{-j}(C_u)$
has no intersection with $\overline{U_\lambda}$, thus the
component $V_u$ of  $f^{-k}_u(U_\lambda)$ that contains $\infty$ is
a disk. Since $\partial V_u$ moves holomorphically with respect to
$u\in \mathcal{U}$, we may shrink $\mathcal{U}$ to a little bit
 so that for all  $u\in \mathcal{U}$, $\partial V_u$ is contained
 in $U_\lambda$. Set $U_u=U_\lambda$. In this way, we get a polynomial-like map $f^k_u: V_u\rightarrow
 U_u$ with only one critical point $\infty$, for all $u\in
 \mathcal{U}$. As a consequence,  the Julia set $J(f_u)$ is not a Cantor set for $u\in
 \mathcal{U}$.

 But this is impossible since $\lambda\in \partial\mathcal{H}_0$.
\end{proof}

Given a parameter $\lambda\in \mathcal{F}$, if $C_\lambda\subset \partial B_\lambda$,
then there is a unique external ray $R_\lambda(t)$ landing at $v_\lambda^+$. We define $\theta(\lambda)=t$.
Note that $C_\lambda\subset \partial B_\lambda$ if and only if $v_\lambda^+\in \partial B_\lambda$.

\begin{lem} \label{angle} If  $\lambda\in \mathcal{F}$ and $v_\lambda^+\in \partial B_\lambda$, then $0<\theta(\lambda)<\frac{1}{2(n-1)}$.
\end{lem}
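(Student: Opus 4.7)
My plan is to use the conformal isomorphism $\Phi_0\colon\mathcal{H}_0\to\mathbb{C}\setminus\overline{\mathbb{D}}$ from Theorem \ref{1a} to first compute the external angle of $v_\lambda^+$ for parameters $\lambda\in\mathcal{H}_0\cap\mathcal{F}$, and then extend by continuity to the boundary case in the statement. For $\lambda\in\mathcal{H}_0\cap\mathcal{F}$, the B\"ottcher coordinate extends past the critical value (since $|\phi_\lambda(c_0)|=|\phi_\lambda(v_\lambda^+)|^{1/n}<|\phi_\lambda(v_\lambda^+)|$), so $\phi_\lambda(v_\lambda^+)\in\mathbb{C}\setminus\overline{\mathbb{D}}$ is well-defined. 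The equivariance $\Phi_0(e^{2\pi i/(n-1)}\lambda)=e^{2\pi i/(n-1)}\Phi_0(\lambda)$ together with the expansion $\Phi_0(\lambda)=4\lambda+\mathcal{O}(\lambda^{2-n})$ forces $\Phi_0$ to send $\mathcal{H}_0\cap\mathcal{F}$ biholomorphically onto the simply connected sector $\{|\zeta|>1,\,0<\arg\zeta<2\pi/(n-1)\}$. Choosing the square-root branch that extends continuously to be positive on the boundary ray $\arg\lambda=0$ gives a single-valued $\eta(\lambda):=\phi_\lambda(v_\lambda^+)$ with $\arg\eta(\lambda)\in(0,\pi/(n-1))$, so the natural ``external angle'' $\tilde\theta(\lambda):=\arg\eta(\lambda)/(2\pi)$ lies in $(0,1/(2(n-1)))$ throughout $\mathcal{H}_0\cap\mathcal{F}$.

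Next, suppose $\lambda\in\mathcal{F}$ with $v_\lambda^+\in\partial B_\lambda$. Such a $\lambda$ must lie in $\partial\mathcal{H}_0$: a small perturbation of $\lambda$ moves $v_\lambda^+$ continuously off $\partial B_\lambda$, and on one side it enters $B_\lambda$, producing a nearby $\mathcal{H}_0$-parameter. Pick a sequence $\lambda_k\in\mathcal{H}_0\cap\mathcal{F}$ converging to $\lambda$. The $e^{\pi i/n}$-rotational symmetry of $B_\lambda$ promotes $v_\lambda^+\in\partial B_\lambda$ to $C_\lambda\subset\partial B_\lambda$, so Theorem \ref{11a} implies $\partial B_\lambda$ is a Jordan curve and $\phi_\lambda$ extends continuously to $\overline{B_\lambda}$. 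Holomorphic dependence of the B\"ottcher coordinate (in the spirit of the holomorphic motion argument of Lemma \ref{3e}) then yields $\phi_{\lambda_k}(v_{\lambda_k}^+)\to\phi_\lambda(v_\lambda^+)$, whence $\theta(\lambda)=\lim_k\tilde\theta(\lambda_k)\in[0,1/(2(n-1))]$.

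To promote this to strict inequality I rule out the two endpoints. If $\theta(\lambda)=0$, then $R_\lambda(0)$ lands at $v_\lambda^+$, forcing $v_\lambda^+=2\sqrt\lambda$ to be a fixed point of $f_\lambda$; the equation $f_\lambda(2\sqrt\lambda)=2\sqrt\lambda$ reduces to $\lambda^{n-1}=2^{-2n}$, whose solutions $\{2^{-2n/(n-1)}\,e^{2\pi ik/(n-1)}\}_{k=0}^{n-2}$ all lie on $\partial\mathcal{F}_0$ and never in the open sector $\mathcal{F}$. The case $\theta(\lambda)=1/(2(n-1))$ reduces to the previous one via the rotation $\lambda':=e^{-2\pi i/(n-1)}\lambda$: by the equivariance of $\Phi_0$, $\theta(\lambda')\equiv\theta(\lambda)-1/(2(n-1))\pmod{1/2}$, so $\theta(\lambda')\in\{0,1/2\}$; the first possibility is what we just handled, while $\theta(\lambda')=1/2$ forces $v_{\lambda'}^-$ to be a fixed point, yielding an analogous algebraic locus which again lies on $\partial\mathcal{F}_0$. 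The main technical obstacle is the continuity step of the second paragraph---rigorously showing $\phi_{\lambda_k}(v_{\lambda_k}^+)\to\phi_\lambda(v_\lambda^+)$ as the parameter crosses $\partial\mathcal{H}_0$---which depends on the Jordan-curve property of $\partial B_\lambda$ together with the holomorphic dependence of the B\"ottcher map.
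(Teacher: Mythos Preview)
Your approach is fundamentally different from the paper's, and it has a circularity problem that makes it unusable at this point in the logical flow.

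\textbf{The circularity.} Your second paragraph asserts that if $v_\lambda^+\in\partial B_\lambda$ then $\lambda\in\partial\mathcal{H}_0$. This is exactly the ``if'' direction of Theorem~\ref{boundary}, which in the paper is proved \emph{after} Lemma~\ref{angle} and in fact relies on it (via Theorem~\ref{3g} and Proposition~\ref{3ff}). Your one-line heuristic (``a small perturbation pushes $v_\lambda^+$ into $B_\lambda$'') is not a proof: both $v_\lambda^+$ and $\partial B_\lambda$ move with $\lambda$, and nothing a priori rules out that $v_\lambda^+$ stays on or outside $\partial B_\lambda$ under all perturbations. Establishing the needed transversality is precisely the content of the later rigidity argument.

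\textbf{The continuity gap.} Even granting $\lambda\in\partial\mathcal{H}_0$, the step $\phi_{\lambda_k}(v_{\lambda_k}^+)\to\phi_\lambda(v_\lambda^+)$ is not innocent. For $\lambda_k\in\mathcal{H}_0$ the Julia set is a Cantor set, and as $\lambda_k\to\lambda$ the modulus $|\Phi_0(\lambda_k)|\to 1$; without knowing that $\partial\mathcal{H}_0$ is locally connected (again Theorem~\ref{3g}, proved later) you cannot conclude that $\arg\Phi_0(\lambda_k)$ converges. You acknowledge this obstacle yourself, but it is not merely technical---it is another manifestation of the same circularity.

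\textbf{A smaller issue.} In your endpoint analysis for $\theta(\lambda)=\tfrac{1}{2(n-1)}$, the rotation sends $\lambda$ to $\lambda'=e^{-2\pi i/(n-1)}\lambda$, which lies outside $\mathcal{F}_0$, so $\theta(\lambda')$ is not defined in the sense of the paper; and the claim that ``$\theta(\lambda')=1/2$ forces $v_{\lambda'}^-$ to be a fixed point'' is incorrect (for odd $n$ it forces $v_{\lambda'}^+$ to be fixed; for even $n$ it forces $f_{\lambda'}(v_{\lambda'}^+)$ to be fixed).

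\textbf{What the paper does.} The paper's proof is a two-line dynamical argument, entirely in the plane of the single map $f_\lambda$, with no limits in $\lambda$. Since $\lambda\in\mathcal{F}$, the critical value $v_\lambda^+$ lies in the interior of the sector $S_0^\lambda$. The cut ray $\Omega_\lambda^1$ (which contains $R_\lambda(0)\cup R_\lambda(1/2)$) is contained in $S_{n-1}^\lambda\cup S_{-(n-1)}^\lambda$, and the cut ray $\Omega_\lambda^{1/(2(n-1))}$ is contained in $S_1^\lambda\cup S_{-1}^\lambda$. These two Jordan curves through $0,\infty$ separate $v_\lambda^+$ from the external rays of angles $0$ and $\tfrac{1}{2(n-1)}$, so the ray landing at $v_\lambda^+$ has angle strictly between them. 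This avoids parameter-space considerations entirely and is available at this stage of the paper.
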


\begin{proof} If  $\lambda\in \mathcal{F}$, then  $v_\lambda^{+}$ is contained in the interior of $S_0^\lambda$. Note that $\Omega_\lambda^{1}\subset S_{n-1}^\lambda\cup S_{-(n-1)}^\lambda$
 and $\Omega_\lambda^{\frac{1}{2(n-1)}} \subset S_{1}^\lambda\cup S_{-1}^\lambda$, we have  $0<
\theta(\lambda)<\frac{1}{2(n-1)}$.
\end{proof}


\begin{lem}[\cite{WQY}, Prop 7.5] \label{3bb} If $\partial B_\lambda$   contains a parabolic cycle, then the following holds:

1. There is a symbol $\epsilon\in \{\pm1\}$, an integer $p\geq 1$, a critical point $c\in C_\lambda$
and two disks $U$ and $V$ containing $c$, such that  $\epsilon f_\lambda^p:
U\rightarrow V$ is a quadratic-like map, hybrid equivalent to the polynomial
$z\mapsto z^2+1/4$.

2. Let  $K$ be the  filled Julia set of $\epsilon f_\lambda^p:
U\rightarrow V$, then for any $j\geq 0$, then intersection
$f_\lambda^j(K)\cap \partial B_\lambda$ is a singleton.
\end{lem}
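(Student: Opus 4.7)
The plan is to construct a degree-$2$ polynomial-like restriction of an iterate of $f_\lambda$ near a free critical point attracted to the parabolic cycle, then apply Douady--Hubbard straightening to identify the hybrid class as $z^2+1/4$.

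First I would set up the parabolic data. Let $\alpha\in\partial B_\lambda$ lie on the parabolic cycle, of period $q_0$ under $f_\lambda$, with multiplier $\mu=(f_\lambda^{q_0})'(\alpha)$ a primitive $q$-th root of unity. The parabolic basin of $\alpha$ consists of finitely many immediate attracting petals cycled by $f_\lambda^{q_0}$; by Fatou's theorem each cycle of petals must absorb the forward orbit of some critical point, and since $\infty$ is already captured by $B_\lambda$ (which is disjoint from the parabolic basin), some $c\in C_\lambda$ must be attracted to $\alpha$. Replacing $\alpha$ by an appropriate iterate, I may assume the attracting petal $P$ containing $c$ is fixed by $f_\lambda^{q_0 q}$. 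Then choose $p\geq 1$ a multiple of $q_0$ and $\epsilon\in\{\pm 1\}$ with $\epsilon\mu^{p/q_0}=1$, so that $\epsilon f_\lambda^p$ fixes $\alpha$ with multiplier $+1$ and preserves $P$; a petal-counting argument based on the uniqueness of the free critical orbit up to sign symmetry shows $\epsilon\in\{\pm 1\}$ always suffices in this family.

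Next I would produce the polynomial-like triple. Using Fatou coordinates on $P$ together with a repelling sector at $\alpha$, construct a Jordan disk $V\ni c$ whose boundary passes through $\alpha$ transversely to the parabolic directions, and let $U$ be the component of $(\epsilon f_\lambda^p)^{-1}(V)$ containing $c$. Since $c$ is a simple critical point and no other critical point of $f_\lambda^p$ lies in a small neighborhood of itself, $\epsilon f_\lambda^p\colon U\to V$ is a proper map of degree $2$, and a standard thickening of $V$ across $\alpha$ arranges $\overline{U}\subset V$, yielding a quadratic-like triple. By Douady--Hubbard straightening this is hybrid equivalent to some $z\mapsto z^2+c_0$; the straightening preserves the topological petal picture at $\alpha$ (one fixed attracting petal with multiplier $+1$ containing the critical point), which forces $c_0=1/4$.

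For part (2), $K$ is connected (being hybrid equivalent to the cauliflower $K(z^2+1/4)$) and its interior is the parabolic basin of $\alpha$ under $\epsilon f_\lambda^p$, a subset of the Fatou set of $f_\lambda$ and hence disjoint from $\partial B_\lambda\subset J(f_\lambda)$. Consequently $K\cap\partial B_\lambda\subset\partial K\cap\partial B_\lambda$; near $\alpha$, the sets $K$ and $\overline{B_\lambda}$ approach $\alpha$ from opposite petal directions and so meet only at $\alpha$ locally, and combining this with invariance of both sets under $f_\lambda^p$ and the fact that $\alpha$ is the only $(\epsilon f_\lambda^p)$-parabolic point in $\partial K$ yields $K\cap\partial B_\lambda=\{\alpha\}$. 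Applying $f_\lambda^j$ then gives $f_\lambda^j(K)\cap\partial B_\lambda=\{f_\lambda^j(\alpha)\}$. The main obstacle is the construction in step two: arranging the boundary of $V$ to pass through $\alpha$ so that $U$ is compactly contained in $V$ despite the pinch at the parabolic point. This requires a delicate Ecalle--Voronin thickening of the attracting petal on both sides of $\alpha$, together with a Riemann--Hurwitz count in the immediate parabolic basin using the rotational symmetry $z\mapsto e^{\pi i/n}z$ of $C_\lambda$ to exclude extraneous critical points of $f_\lambda^p$ from the interior of $U$.
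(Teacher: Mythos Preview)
The paper does not prove this lemma; it is quoted from \cite{WQY}, Proposition~7.5. However, the paper's own appendix (in particular Lemma~\ref{tableau} and the remark following it) reveals the method used there: the quadratic-like map is obtained from the Yoccoz puzzle, namely as $(\epsilon f_\lambda^p,\,P^\lambda_{d+p}(c),\,P^\lambda_d(c))$ once the tableau $T(c)$ is shown to be periodic, and the statement that $K\cap\partial B_\lambda$ is a single point comes from the puzzle combinatorics (the end of the nest meets $\partial B_\lambda$ in exactly one point). This is quite different from your petal/Fatou-coordinate construction.

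Your approach is natural but has a genuine gap precisely where you flag it. In a quadratic-like map one needs $\overline{U}\subset V$, i.e.\ a definite annulus $V\setminus\overline{U}$. If both $U$ and $V$ are pinched at the parabolic point $\alpha$, no amount of ``thickening'' produces compact containment: the modulus of $V\setminus\overline{U}$ is zero near $\alpha$. The standard way around this is to work with a \emph{renormalization in the sense of puzzles}, where the domains are puzzle pieces bounded by equipotentials and external/cut rays and compact containment is guaranteed by a non-degenerate critical annulus coming from an admissible graph. That is exactly what \cite{WQY} does, and it sidesteps the parabolic pinch entirely. Your argument for part~(2) is also incomplete: the claim that $K$ and $\overline{B_\lambda}$ ``approach $\alpha$ from opposite petal directions'' presupposes local separation that you have not established, whereas in the puzzle framework the singleton intersection is a direct consequence of the nested pieces shrinking to a point of $\partial B_\lambda$.
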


 Base on Lemma \ref{3bb},  let $K^+\in \{f_\lambda(K),
-f_\lambda(K)\}$ be the set containing $v_\lambda^+$,
and $\beta_\lambda$ be the intersection point of $K^+$ and
$\partial B_\lambda$.

\begin{rmk} \label{4pp}
If $n$ is odd, since $f_\lambda$ is an odd function,   $\beta_\lambda$ is necessarily a
parabolic point; if $n$ is even, either $\beta_\lambda$ or $-\beta_\lambda$ is
a parabolic point. Thus $\theta$ satisfies either $\tau^p(\theta)\equiv \theta$ or $\tau^p(\theta)\equiv \theta+\frac{1}{2}$ for some $p\geq1$.
\end{rmk}

For any $t\in [0, 1)$,
the parameter ray $\mathcal{R}_0(t)$ of angle $t$ in $\mathcal{H}_0$ is defined by $\mathcal{R}_0(t):=\Phi_0^{-1}((1,+\infty)e^{2\pi i t})$.
Its {\it impression } $\mathcal{X}_t$  is defined by
$$\mathcal{X}_t:=\cap_{k\geq1}\overline{\Phi_0^{-1}(\{re^{2\pi i \theta}; 1<r<1+{1}/{k},
|\theta-t|<{1}/{k}\}}).$$
 The  set $\mathcal{X}_t$  is  a connected and compact subset of $
\partial\mathcal{H}_0$. It satisfies
$$\mathcal{X}_{t+\frac{1}{n-1}}=e^{2\pi i/{(n-1)}}\mathcal{X}_t,  \ \{\overline{\lambda}; \lambda\in\mathcal{X}_{t}\}=\mathcal{X}_{1-t}.$$



\begin{lem} \label{3d} Let $t\in [0, \frac{1}{n-1})$ and  $\lambda\in
\mathcal{X}_t\cap \mathcal{F}_0$.

1. If $\lambda$ is not a cusp, then the external ray
$R_\lambda(t/2)$ lands at $v_\lambda^+$.

2. If $\lambda$ is a cusp, then the external ray $R_\lambda(t/2)$ lands at $\beta_\lambda$.
\end{lem}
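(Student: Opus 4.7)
The plan is to approximate $\lambda$ along the parameter ray $\mathcal{R}_0(t)$, transport the information about the critical value through the parameterization $\Phi_0$, and then pin down the landing point of $R_\lambda(t/2)$ using the boundary structure of $B_\lambda$ combined with a cut-ray trapping argument. Since $\lambda\in\mathcal{X}_t$, one can pick $\lambda_k\in\mathcal{R}_0(t)$ with $\Phi_0(\lambda_k)=r_k e^{2\pi i t}$, $r_k\searrow 1$, and $\lambda_k\to\lambda$. The identity $\Phi_0(\lambda_k)=\phi_{\lambda_k}(v_{\lambda_k}^+)^2$, together with Lemma \ref{angle} fixing the branch of the square root on $\mathcal{F}_0$, yields $\phi_{\lambda_k}(v_{\lambda_k}^+)=\sqrt{r_k}\,e^{\pi i t}$; in particular $v_{\lambda_k}^+\in R_{\lambda_k}(t/2)$.

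Since $\lambda\in\partial\mathcal{H}_0\subset\mathcal{M}$, the Julia set $J(f_\lambda)$ is not a Cantor set, and Lemma \ref{3b} provides the dichotomy that $\partial B_\lambda$ contains either $C_\lambda$ (non-cusp case) or a parabolic cycle (cusp case). In either case Theorem \ref{11a} forces $\partial B_\lambda$ to be a Jordan curve, so $R_\lambda(t/2)$ lands at a unique point $z^{\star}\in\partial B_\lambda$; the task reduces to showing $z^{\star}=v_\lambda^+$ in the non-cusp case and $z^{\star}=\beta_\lambda$ in the cusp case.

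To identify $z^{\star}$, I would trap it between nearby cut rays. Pick angles $\alpha_-<t/2<\alpha_+$ in $\bigcup_{j\geq 0}\tau^{-j}(\Theta_{per})$ arbitrarily close to $t/2$ and satisfying the orbit hypothesis so that the cut rays $\Omega_u^{\alpha_\pm}$ exist on a neighborhood $\mathcal{U}$ of $\lambda$ in $\mathcal{F}_0$ (Proposition \ref{23g}). By Lemma \ref{3e}, Theorem \ref{cutreal} and Remark \ref{3gg}, $\Omega_u^{\alpha_\pm}$ depends Hausdorff-continuously on $u\in\mathcal{U}$. Let $W_u$ denote the component of $\mathbb{\widehat{C}}\setminus(\Omega_u^{\alpha_-}\cup\Omega_u^{\alpha_+})$ containing the external ray $R_u(t/2)$. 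For each large $k$, $v_{\lambda_k}^+\in R_{\lambda_k}(t/2)\subset W_{\lambda_k}$, so Hausdorff continuity yields $v_\lambda^+\in\overline{W_\lambda}$; similarly $R_\lambda(t/2)\subset W_\lambda$ gives $z^{\star}\in\overline{W_\lambda}\cap\partial B_\lambda$. The arc $\overline{W_\lambda}\cap\partial B_\lambda$ has endpoints at the landings of $R_\lambda(\alpha_\pm)$ on the Jordan curve $\partial B_\lambda$, and these endpoints converge to $z^{\star}$ as $\alpha_\pm\to t/2$. In the non-cusp case $v_\lambda^+\in\partial B_\lambda$, so $v_\lambda^+$ already lies on this shrinking arc and must equal $z^{\star}$. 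In the cusp case Lemma \ref{3bb} places $v_\lambda^+$ inside the connected decoration $K^+$, which meets $\partial B_\lambda$ only at $\beta_\lambda$; since $K^+$ is connected and $v_\lambda^+\in\overline{W_\lambda}$ (with the cut rays $\Omega_\lambda^{\alpha_\pm}$ close to $t/2$ chosen to miss $K^+$), one gets $K^+\subset\overline{W_\lambda}$, whence $\beta_\lambda\in\overline{W_\lambda}\cap\partial B_\lambda$ which collapses to $\{z^{\star}\}$, forcing $z^{\star}=\beta_\lambda$.

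The hard part will be the cusp case, where $v_{\lambda_k}^+\to v_\lambda^+$ drifts off $\partial B_\lambda$ into the decoration $K^+$ while $z^{\star}$ must remain on $\partial B_\lambda$; the cut-ray trap is tailored to disentangle these behaviors by localizing the landing to the access point $\beta_\lambda=K^+\cap\partial B_\lambda$. Making the collapse of $\overline{W_\lambda}\cap\partial B_\lambda$ rigorous requires exploiting both the Jordan property of $\partial B_\lambda$ and continuity of the landing map $\alpha\mapsto\phi_\lambda^{-1}(e^{2\pi i\alpha})$ along the dense subset of angles where cut rays are defined, together with verifying that the cut rays $\Omega_\lambda^{\alpha_\pm}$ avoid the decoration $K^+$ for $\alpha_\pm$ close enough to $t/2$.
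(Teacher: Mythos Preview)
Your approach is genuinely different from the paper's. The paper argues by contradiction: since $\partial B_\lambda$ is a Jordan curve, there is \emph{some} angle $t'$ with $R_\lambda(t')$ landing at $v_\lambda^+$ (non-cusp) or at $\beta_\lambda$ (cusp); assuming $t'\notin\{t/2,(1+t)/2\}$, the paper uses Lemma~\ref{3kk} to find two cut rays placing the connected set $R_\lambda(t')\cup\{v_\lambda^+\}$ (resp.\ $R_\lambda(t')\cup K^+$) and the rays $R_\lambda(t/2)$, $R_\lambda((1+t)/2)$ in three distinct complementary components. Stability of the cut rays then keeps $v_u^+$ away from the components of $R_u(t/2)$ and $R_u((1+t)/2)$ for nearby $u\in\mathcal{H}_0$, so $\arg\Phi_0(u)$ avoids a neighborhood of $t$, contradicting $\lambda\in\mathcal{X}_t$. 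Finally $t'=(1+t)/2$ is excluded by a sector argument. Starting from the unknown landing angle $t'$ rather than from $t/2$ is what makes the paper's route economical: it needs only one fixed pair of cut rays, not a shrinking family.

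Your direct trapping argument has a real gap at the outset: $\lambda\in\mathcal{X}_t$ does \emph{not} furnish a sequence $\lambda_k\in\mathcal{R}_0(t)$ converging to $\lambda$. The impression is defined through sectors $\{re^{2\pi i\theta}:|\theta-t|<1/k\}$, so you are only guaranteed $\lambda_k$ with $\Phi_0(\lambda_k)=r_k e^{2\pi i\theta_k}$ and $\theta_k\to t$; the radial limit need not exist, let alone equal $\lambda$. Your trap survives if you work with $\theta_k/2$ in place of $t/2$ throughout, but this must be done explicitly. Separately, Lemma~\ref{angle} as stated requires $v_\lambda^+\in\partial B_\lambda$ and does not directly fix the branch of $\sqrt{\Phi_0(\lambda_k)}$ for $\lambda_k\in\mathcal{H}_0$; you need to run its sector argument on $\phi_{\lambda_k}(v_{\lambda_k}^+)$ itself. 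The cusp-case issue you flag --- that the bracketing cut rays $\Omega_\lambda^{\alpha_\pm}$ must miss $K^+$ --- is genuine, and your scheme requires it for \emph{every} pair $\alpha_\pm\to t/2$, whereas the paper needs only one fixed pair of cut rays separating $t'$ from $t/2$ and $(1+t)/2$.
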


\begin{proof} For any  parameter $\lambda\in
\mathcal{X}_t\cap \mathcal{F}_0$, 
it follows from Lemma \ref{3b} that either $C_\lambda\subset
\partial B_\lambda$ or $\partial B_\lambda$ contains a parabolic cycle. Since $\partial B_\lambda$ is a Jordan curve (Theorem \ref{11a}),
there is an external ray $R_\lambda(t')$ landing at $v_\lambda^+$ (if $\lambda$ is not a cusp) or
$\beta_\lambda$ (if $\lambda$ is a cusp).

If $t'\notin\{t/2, (1+t)/2\}$, then there exist two cut rays
$\Omega_\lambda^\alpha$ and $\Omega_\lambda^\beta$ with $\alpha,\beta\in \cup_{k\geq0}\tau^{-k}(\Theta_{per})$ (Lemma \ref{3kk}) such that the
connected set $R_\lambda(t')\cup \{v_\lambda^+\}$ (if $\lambda$ is not a cusp) or $R_\lambda(t')\cup K^+$ (if $\lambda$ is a cusp), and the external rays $
R_\lambda(t/2), R_\lambda((t+1)/2)$ are contained in three different
components of
$\mathbb{\widehat{C}}\setminus(\Omega_\lambda^\alpha\cup\Omega_\lambda^\beta)$.
See Figure 4.
\begin{figure}[h]
\begin{center}
\includegraphics[height=6cm]{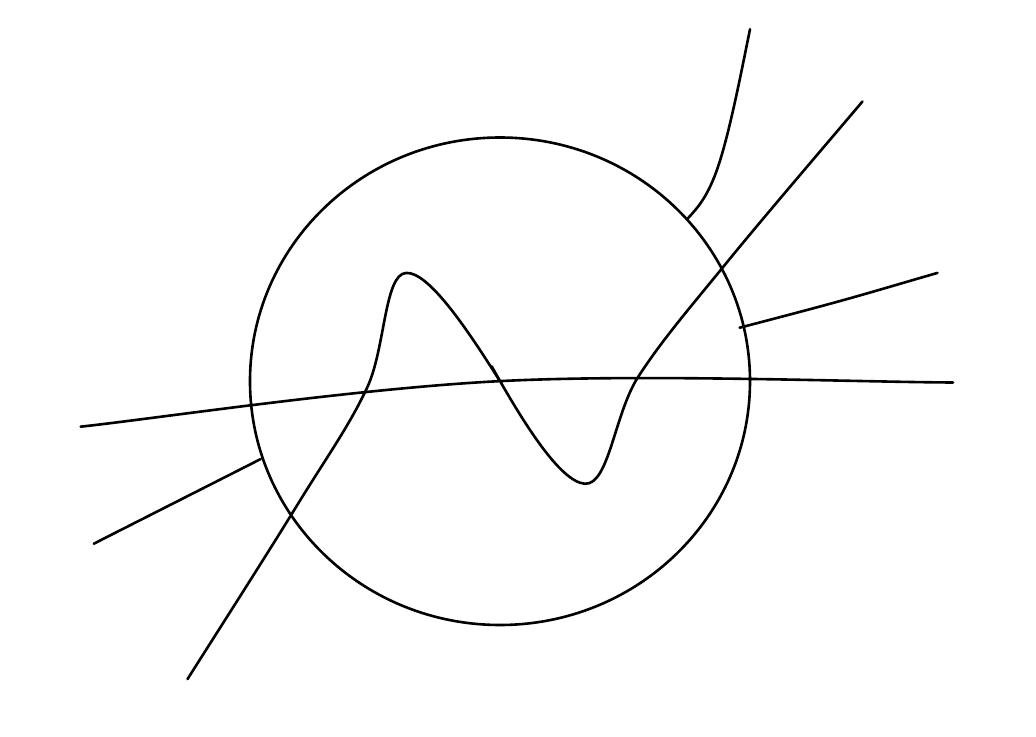}
\put(-130,70){$0$} \put(-35,149){$\Omega_\lambda^\alpha$}
\put(-16,80){$\Omega_\lambda^\beta$} \put(-20,105){$R_\lambda(t/2)$}
\put(-70,170){$R_\lambda(t')$} \put(-265,33){$R_\lambda((1+t)/2)$}
\put(-86,30){$\partial B_\lambda$} \put(-93,110){$v_\lambda^+$}
 \caption{Two cut rays
$\Omega_\lambda^\alpha$ and $\Omega_\lambda^\beta$ separate the
external rays $R_\lambda(t'), R_\lambda(t/2),  R_\lambda((t+1)/2)$
in case that $\lambda$ is not a cusp.}
\end{center}\label{f5}
\end{figure}
 Since the critical value $v_u^+=2\sqrt{u}$ and the cut rays $\Omega_u^\alpha, \Omega_u^\beta$  move continuously with respect to the
parameter $u\in\mathcal{F}_0$ (Lemma \ref{3e} and Remark \ref{3gg}),  there is a neighborhood
$\mathcal{V}$ of $\lambda$  such that for all $u\in
\mathcal{V}\cap\mathcal{F}_0$,

$\bullet$  $R_u(t')$ and  $v_u^+$ are contained in the same
component of
$\mathbb{\widehat{C}}\setminus(\Omega_u^\alpha\cup\Omega_u^\beta)$.

$\bullet$ The external rays $R_u(t'), R_u(t/2), R_u((t+1)/2)$ are
contained in three different components of
$\mathbb{\widehat{C}}\setminus(\Omega_u^\alpha\cup\Omega_u^\beta)$.

By shrinking $\mathcal{V}$ a little bit, we see that there is a small number $\varepsilon>0$ such that $\arg\Phi_0(u)=2 \arg
\phi_u(v_u^+)\notin (t-\varepsilon,t+\varepsilon)$ for all  $u\in
\mathcal{V}\cap\mathcal{F}_0\cap \mathcal{H}_0$.
It's a contradiction since $\lambda\in
\mathcal{X}_t$.

So either $t'=t/2$ or  $t'=(1+t)/2$. To finish, we show the latter
is impossible.
If $\lambda\in (0,+\infty)$, then $\partial B_\lambda$ contains a cusp and $t'=0$. If $\lambda\in \mathcal{F}$,
then there is a component $V$ of $\mathbb{\widehat{C}}\setminus (\Omega_\lambda^1\cup \Omega_\lambda^{\frac{1}{2(n-1)}})$ such that
 $v_\lambda^+\cup R_\lambda(t')\subset \overline{V}$. In this case,
we have $0\leq t\leq\frac{1}{2(n-1)}$.

 So $t'=t/2$.
\end{proof}







We are now ready to state the main result of this section:

\begin{pro} [Main proposition]\label{3ff} Given two  parameters $\lambda_1, \lambda_2\in \mathcal{F}$, if
$v_{\lambda_i}^+\in \partial B_{\lambda_i}$(i=1,2) and
  $\theta(\lambda_1)=\theta(\lambda_2)$, then $\lambda_1=\lambda_2$.
\end{pro}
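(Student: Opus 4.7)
The plan is to construct a quasiconformal conjugacy $\varphi:\mathbb{\widehat{C}}\to\mathbb{\widehat{C}}$ from $f_{\lambda_1}$ to $f_{\lambda_2}$ which is conformal on the Fatou set, then upgrade it to a M\"obius map by a zero Lebesgue measure argument on the Julia set, and finally force $\varphi=\mathrm{id}$ by exploiting the explicit form of the family. Set $t=\theta(\lambda_1)=\theta(\lambda_2)$. Since $\theta(\lambda_i)$ is defined, $\lambda_i$ is not a cusp, $C_{\lambda_i}\subset\partial B_{\lambda_i}$, and by Lemma \ref{3d} the external ray $R_{\lambda_i}(t/2)$ lands at $v_{\lambda_i}^+$.

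First I would fix a periodic angle $\theta_0\in\Theta_{per}$ and a preimage $\alpha\in\bigcup_{k\geq0}\tau^{-k}(\theta_0)$ so that, by Proposition \ref{23g}, the cut rays $\Omega_{\lambda_i}^\alpha$ exist for $i=1,2$ and, by Remark \ref{3gg}, are quasi-circles depending continuously on the parameter. Together with the landing rays $R_{\lambda_i}(t/2),R_{\lambda_i}((t+1)/2)$ and an equipotential of $\phi_{\lambda_i}$, these cut the sphere into a common combinatorial Yoccoz puzzle for $f_{\lambda_1}$ and $f_{\lambda_2}$. Using the B\"ottcher maps $\phi_{\lambda_i}$ on $B_{\lambda_i}$ and the Riemann maps $\psi_{\lambda_i}$ on $T_{\lambda_i}$ and its strict preimages, one obtains a dynamically equivariant biholomorphism between the Fatou pieces of this puzzle. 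Extending quasiconformally across the cut-ray arcs and then pulling back by the inverse branches $h_\epsilon^{\lambda_i}$ to all puzzle levels (with uniformly bounded dilatation, via the $\lambda$-lemma and Lemma \ref{3e}), one obtains in the limit a qc conjugacy $\varphi:\mathbb{\widehat{C}}\to\mathbb{\widehat{C}}$ satisfying $\varphi\circ f_{\lambda_1}=f_{\lambda_2}\circ\varphi$ and whose Beltrami coefficient is supported on $J(f_{\lambda_1})$.

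The main obstacle is to show the Beltrami coefficient vanishes almost everywhere on $J(f_{\lambda_1})$. Because $\lambda_1\in\mathcal{F}$ and $v_{\lambda_1}^+\in\partial B_{\lambda_1}$ with $\lambda_1$ not a cusp, the free critical orbit neither escapes nor is attracted to a finite cycle, and by Lemma \ref{3a} $f_{\lambda_1}$ admits no polynomial-like renormalization at $v_{\lambda_1}^+$. Running an enhanced-nest argument of Lyubich type on the Yoccoz puzzle of \cite{WQY} generated by $\Omega_{\lambda_1}^\alpha$, its preimages and the equipotentials, one produces a nest of critical puzzle pieces around $v_{\lambda_1}^+$ whose annular moduli are summable. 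This implies $\mathrm{Leb}(J(f_{\lambda_1}))=0$, hence the Beltrami coefficient of $\varphi$ is trivial, and $\varphi$ is M\"obius.

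To conclude, $\varphi$ preserves the superattracting fixed point $\infty$ and maps $T_{\lambda_1}\ni 0$ to $T_{\lambda_2}\ni 0$, so $\varphi(z)=az$ for some $a\in\mathbb{C}^*$. Substituting into $\varphi\circ f_{\lambda_1}=f_{\lambda_2}\circ\varphi$ and matching the $z^n$ and $z^{-n}$ coefficients gives $a^{n-1}=1$ and $\lambda_2=a^2\lambda_1$. Hence $a^2$ is an $(n-1)$-th root of unity, so $\arg(\lambda_2)-\arg(\lambda_1)\in\frac{2\pi}{n-1}\mathbb{Z}$; but $\lambda_1,\lambda_2\in\mathcal{F}=\{0<\arg\lambda<2\pi/(n-1)\}$, which forces $a^2=1$ and therefore $\lambda_1=\lambda_2$.
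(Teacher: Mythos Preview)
Your overall strategy matches the paper's: build a quasiconformal conjugacy that is conformal on the Fatou set, invoke a zero-measure result for $J(f_{\lambda_1})$ (Theorem~\ref{lm}, proven in the appendix via a Lyubich-type argument), upgrade to a M\"obius map, and finish with $\varphi(z)=az$, $a^{n-1}=1$, $\lambda_2=a^2\lambda_1$, hence $\lambda_1=\lambda_2$ from $\lambda_i\in\mathcal{F}$. Two points of comparison and two small corrections are worth noting.

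First, the paper splits off the case where $\theta(\lambda_1)$ is \emph{rational}: then both maps are postcritically finite, and the conclusion follows directly from Thurston's rigidity theorem, avoiding any qc-surgery. Only in the irrational case does the paper construct the conjugacy; it does so concretely, starting from the single cut ray $\Omega_\lambda^{1}$ (a quasi-circle by Lemma~\ref{3ef}) and building a sequence $\psi_d$ inductively by lifting through the pieces $Q_d^\lambda(\cdot)$ cut out by $f_\lambda^{-d}(\Omega_\lambda^1)$. Your more general ``common Yoccoz puzzle'' description would work, but the paper's explicit choice of $\Omega_\lambda^{1}$ and the piece-by-piece lifting makes the uniform dilatation bound and the property $\psi_{d+1}|_{f_{\lambda_1}^{-d}(B_{\lambda_1}^L)}=\psi_d|_{f_{\lambda_1}^{-d}(B_{\lambda_1}^L)}$ transparent.

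Second, two citations in your write-up are off. You invoke Lemma~\ref{3d} to get that $R_{\lambda_i}(t/2)$ lands at $v_{\lambda_i}^+$, but with $t=\theta(\lambda_i)$ this is already the \emph{definition} of $\theta(\lambda_i)$: the ray $R_{\lambda_i}(t)$ (not $t/2$) lands at $v_{\lambda_i}^+$; Lemma~\ref{3d} concerns parameters in an impression $\mathcal{X}_t$, which is not assumed here. Also, Lemma~\ref{3a} does not say $f_{\lambda_1}$ is non-renormalizable; the non-renormalizability you need is rather a consequence of $v_{\lambda_1}^+\in\partial B_{\lambda_1}$ together with Lemma~\ref{tableau}, and is what feeds into Theorem~\ref{lm}.
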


To prove  Proposition \ref{3ff}, we need the following result:

\begin{thm}[Lebesgue measure] \label{lm} If $f_\lambda^k(v_\lambda^+)\in \partial B_\lambda$ for some $k\geq0$, then the Lebesgue measure of $J(f_\lambda)$
is zero.
\end{thm}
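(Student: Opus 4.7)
The plan is to follow Lyubich's zero Lebesgue measure argument for non-renormalizable maps, carried out through the Yoccoz puzzle machinery of \cite{WQY}. The hypothesis $f_\lambda^k(v_\lambda^+)\in\partial B_\lambda$ forces the free critical orbit to be eventually trapped in $\partial B_\lambda$, where $f_\lambda$ acts conjugately to $z\mapsto z^n$; in particular the post-critical set in $\mathbb{C}$ is a finite set together with a pre-periodic orbit on $\partial B_\lambda$. This places $f_\lambda$ in a Misiurewicz-type regime (or parabolic-Misiurewicz if $\partial B_\lambda$ additionally carries a parabolic cycle).

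First I would construct a Yoccoz puzzle adapted to this dynamics. Using density of $\Theta_{\mathrm{per}}$ in $\Theta$ and the finiteness of $\bigcup_{k\geq 1} f_\lambda^k(C_\lambda)\cap\mathbb{C}$, I pick $\theta\in\Theta_{\mathrm{per}}$ whose cut ray $\Omega_\lambda^\theta$ and all sufficiently deep pullbacks, produced by Proposition \ref{23g}, avoid the post-critical set off $\partial B_\lambda$. Together with an equipotential of $\phi_\lambda$ (and the boundary of $T_\lambda$ when $T_\lambda\neq B_\lambda$), these cut rays cut $\mathbb{C}$ into a nested hierarchy of puzzle pieces $\{P_n(z)\}$ with $f_\lambda(P_{n+1}(z))=P_n(f_\lambda(z))$.

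Second, I would show that these puzzle pieces shrink to points for a.e.\ $z\in J(f_\lambda)$. In the non-parabolic case, Theorem \ref{4a} gives uniform backward contraction along $\partial B_\lambda$, which propagates via Koebe distortion to all pullbacks staying a definite distance from the (finite) post-critical set; in the parabolic case one refines the puzzle by a standard flower sector decomposition around the parabolic cycle. With shrinkage established, I execute the density argument: suppose $|J(f_\lambda)|>0$ and let $z_0\in J(f_\lambda)$ be a Lebesgue density point, chosen off the countable set of cut-ray preimages and critical pre-orbits. Every puzzle piece $P_n(z_0)$ is obtained by pulling back a fixed puzzle piece $P_0$ along a composition of univalent branches whose distortion is uniformly bounded (by Koebe, thanks to the Misiurewicz structure). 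Since $P_0$ contains a definite portion of Fatou components (preimages of $B_\lambda$ or $T_\lambda$), so does $P_n(z_0)$ in a uniform proportion, yielding $|J(f_\lambda)\cap P_n(z_0)|/|P_n(z_0)|\leq c<1$ uniformly in $n$. Letting $P_n(z_0)\to\{z_0\}$ contradicts the Lebesgue density theorem.

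The main obstacle is securing the bounded-distortion estimate for deep pullbacks whose orbits may approach the critical set $C_\lambda$. Because the forward critical orbit is eventually trapped in $\partial B_\lambda$ (a quasi-circle in the non-parabolic case by Theorem \ref{11a}), Ma\~n\'e-type expansion outside a neighborhood of the post-critical set should give the required geometric control, but the parabolic case requires additional care via parabolic Yoccoz puzzles and flower decomposition to ensure puzzle pieces shrink along orbits entering the parabolic petals.
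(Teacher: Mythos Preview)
Your proposal has a genuine gap at its foundation. You assert that the hypothesis places $f_\lambda$ in a ``Misiurewicz-type regime'' with ``a finite set together with a pre-periodic orbit on $\partial B_\lambda$,'' and you then rely on this to get univalent pullback branches with uniformly bounded Koebe distortion. But this is false in the principal case $k=0$ (which is exactly what is needed for Proposition~\ref{3ff}): when $v_\lambda^+\in\partial B_\lambda$ the critical set $C_\lambda$ itself lies on $\partial B_\lambda$, and the induced action on $\partial B_\lambda$ is (after conjugation) $\theta\mapsto n\theta\bmod 1$. For an irrational landing angle the critical orbit is infinite and can be dense in $\partial B_\lambda$ (cf.\ Remark~\ref{classi}); in particular $C_\lambda$ may be recurrent, even persistently recurrent. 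This invalidates your appeal to Theorem~\ref{4a}, whose hypothesis explicitly excludes a recurrent critical set on $\partial B_\lambda$, and it destroys the ``univalent branches'' claim: deep pullbacks of a depth-$0$ piece may hit $C_\lambda$ arbitrarily many times, so Koebe distortion gives you nothing uniform. Ma\~n\'e's theorem does not rescue this either, since it too requires non-recurrence of the critical point.

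The paper's proof confronts this recurrence head-on via Lyubich's dichotomy. After building an admissible puzzle (Lemma~\ref{adm}) and checking that the hypothesis forces all critical tableaux to be non-periodic (hence no renormalization), it splits into two cases. If $C_\lambda$ is \emph{not} persistently recurrent, a density-point argument close to what you outline works (Proposition~\ref{nonper} and Lemma~\ref{density}), using pullbacks of degree at most~$2$ rather than univalent ones. If $C_\lambda$ \emph{is} persistently recurrent, the paper constructs a repelling system $g:\mathbf U\to\mathbf V$ containing the critical orbit (Theorem~\ref{pers}), shows that $J(f_\lambda)$ is a.e.\ swallowed by preimages of $K(g)$, proves $\sum_d\mathbf m(\mathbf A^d(z))=\infty$ (Lemma~\ref{Cantor}), and then applies the area--modulus inequality to get $|K(g)|=0$ (Theorem~\ref{zerom}). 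Your sketch contains no analogue of this second branch, which is where the real content lies.
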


The proof of Theorem \ref{lm} is based on the Yoccoz puzzle theory following Lyubich \cite{L}. For this, we put the proof in the appendix.

\

\noindent{\it Proof of Proposition \ref{3ff}.}  If  $\theta(\lambda_1)$ is a rational number, then both  $f_{\lambda_1}$ and $f_{\lambda_2}$ are postcritically finite.
We define a homeomorphism
$\psi:\mathbb{\widehat{C}}\rightarrow \mathbb{\widehat{C}}$ such that
$\psi|_{B_{\lambda_1}}=\phi_{\lambda_2}^{-1}\circ \phi_{\lambda_1}$. Then
there is a homeomorphism $\varphi:\mathbb{\widehat{C}}\rightarrow \mathbb{\widehat{C}}$ satisfying $\psi\circ f_{\lambda_1}=
f_{\lambda_2}\circ \varphi$ and
$\varphi|_{{B_{\lambda_1}}}=\psi|_{{B_{\lambda_1}}}$.  (In fact, $\varphi$ and $\psi$ can be made quasiconformal because  $\partial B_1$ and $\partial B_2$ are quasi-circles, see Theorem \ref{11a}.) The condition $\theta(\lambda_1)=\theta(\lambda_2)$ implies that $\varphi$ and $\psi$ are isotopic rel the postcritical
set $P(f_{\lambda_1}):=\{\infty\}\cup\cup_{k\geq1}f_{\lambda_1}^k(C_{\lambda_1})$. Thus $f_{\lambda_1}$ and  $f_{\lambda_2}$ are
combinatorially equivalent. It follows from Thurston's theorem (see \cite{DH}) that
$f_{\lambda_1}$ and $f_{\lambda_2}$ are  conjugate via a M\"obius transformation.
This M\"obius map takes the form  $\gamma(z)=az$ with $a^{n-1}=1$ and $\lambda_2=a^2\lambda_1$.
 The condition
$\lambda_1, \lambda_2\in \mathcal{F}$ implies
$\lambda_1=\lambda_2$.

In the following, we assume $\theta(\lambda_1)$ is an irrational number. In that case, both  $f_{\lambda_1}$ and $f_{\lambda_2}$
are postcritically infinite. We will construct a quasiconformal conjugacy between $f_{\lambda_1}$ and $f_{\lambda_2}$
with the help of cut rays.


By Lemma \ref{3ef},  periodic cut rays are quasi-circles. This enables us to construct  a
quasi-conformal map $\psi_0:
\mathbb{\widehat{C}}\rightarrow\mathbb{\widehat{C}}$ with
$0,\infty$ fixed, such that:

$\bullet$ $\psi_0|_{B_{\lambda_1}^L}=\phi_{\lambda_2}^{-1}\circ
\phi_{\lambda_1}|_{B_{\lambda_1}^L}$.

$\bullet$
$\psi_0(\Omega_{\lambda_1}^{1})=\Omega_{\lambda_2}^{1}$.

\begin{figure}[h]
\begin{center}
\includegraphics[height=7cm]{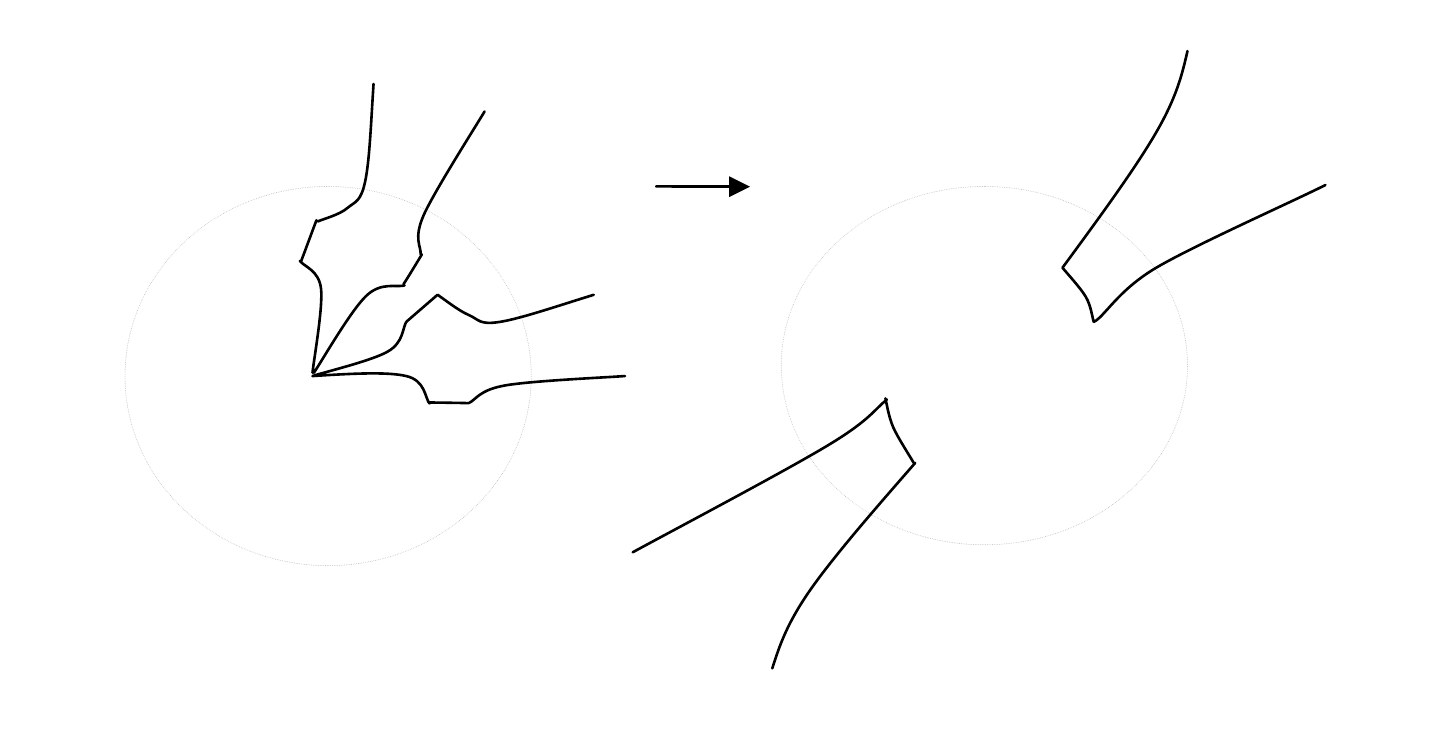}
\put(-210,160){$f_\lambda$} \put(-120,100){$\Gamma_d^\lambda$}
\put(-167,65){$\bullet  v_\lambda^-$} \put(-210,45){$
Q_d^\lambda(v_\lambda^-)$}
 \put(-87,132){$\bullet \ v_\lambda^+$}  \put(-75,150){$Q_d^\lambda(v_\lambda^+)$}
 \put(-240,105){$Q_{d+1}^\lambda(c_j)$}
 \put(-260,140){$Q_{d+1,j}^\lambda$}   \put(-290,170){$Q_{d+1}^\lambda(c_{j+1})$}
 \put(-250,103){$\bullet$} \put(-260,103){$c_j$}
  \put(-313,87){$0$}
  \put(-305,53){$\partial B_\lambda$}  \put(-87,53){$\partial B_\lambda$}
   \put(-285,143){$\bullet$}\put(-300,133){$c_{j+1}$}
  \put(-250,75){$\cdots$}
 \put(-330,155){$\cdots$}
\caption{Partition and labeling.}
\end{center}\label{f5}
\end{figure}

In the following, we will construct a sequence of quasi-conformal maps $\psi_j$
such that

(a). $f_{\lambda_2}\circ \psi_{j+1}=\psi_j\circ f_{\lambda_1}$
for all $j\geq0$,

(b). $\psi_{j+1}|_{f_{\lambda_1}^{-j}(B_{\lambda_1}^L)}=\psi_{j}|_{f_{\lambda_1}^{-j}(B_{\lambda_1}^L)}$,

(c). $\psi_j(\Omega_{\lambda_1}^{\alpha})=\Omega_{\lambda_2}^{\alpha}$ for all
 $\alpha\in\tau^{-j}\{1,\frac{1}{2}\}$.

 The construction is as follows.
For $\lambda\in\{\lambda_1,\lambda_2\}$, any $d\geq0$ and any $z\in
C_\lambda\cup \{v_\lambda^+,v_\lambda^-\}$, let $Q_d^\lambda(z)$ be
the component of $\overline{\mathbb{C}}\setminus
f_\lambda^{-d}(\Omega_\lambda^1)$ containing $z$.
 The domain
$\Gamma_d^\lambda:=\overline{\mathbb{C}}
\setminus(\overline{Q^\lambda_d(v_\lambda^+)\cup
Q^\lambda_d(v_\lambda^-)})$ either is empty or  consists of one or
two topological disks. Each component of
$f_\lambda^{-1}(\overline{\mathbb{C}} \setminus\Gamma_d^\lambda)$ is
a disk. Let $Q^\lambda_{d+1,j}$ be its component lying in between
$Q^\lambda_{d+1}(c_j(\lambda))$ and $Q^\lambda_{d+1}(c_{j+1}(\lambda))$ for $0\leq j
<2n$, $c_{2n}(\lambda)=c_0(\lambda)$. See Figure 5. Note that the map
$f_\lambda|_{Q^\lambda_{d+1,j}}:Q^\lambda_{d+1,j}\rightarrow\Gamma_d^\lambda$
is a conformal isomorphism.

Suppose that $\psi_0,\psi_1,\cdots, \psi_d$ are already defined and satisfy (a),(b), (c). We will
define $\psi_{d+1}$ piece by piece. Set  $\psi_{d+1}|_{Q^{\lambda_1}_{d+1,j}}=
(f_{\lambda_2}|_{Q^{\lambda_2}_{d+1,j}})^{-1}\circ \psi_d\circ
(f_{\lambda_1}|_{Q^{\lambda_1}_{d+1,j}})$. We then define
$\psi_{d+1}|_{\overline{Q^{\lambda_1}_{d+1}(c_j)}}$ so that it coincides with
$\psi_{d+1}|_{Q^{\lambda_1}_{d+1,j}}$ in their common
boundary and the following diagram commutes
$$
\xymatrix{&
 \overline{Q^{\lambda_1}_{d+1}(c_j(\lambda_1))}\ar[r]^{f_{\lambda_1}}\ar[d]_{\psi_{d+1}}
 &  \overline{Q^{\lambda_1}_{d}(f_{\lambda_1}(c_j(\lambda_1)))} \ar[d]_{\psi_d}\\
&\overline{Q^{\lambda_2}_{d+1}(c_j(\lambda_2))}\ar[r]_{f_{\lambda_2}}
&\overline{Q^{\lambda_2}_{d}(f_{\lambda_2}(c_j(\lambda_2)))}
 }
$$

One may verify that $\psi_{d+1}$ is well defined and satisfies
$f_{\lambda_2}\circ \psi_{d+1}=\psi_d\circ f_{\lambda_1}$.
By induction assumption, $\psi_d$ preserves the $d$-th preimages of $\Omega_\lambda^1$.  Then the condition $\theta(\lambda_1)=\theta(\lambda_2)$ and the construction of
$\psi_{d+1}$ implies that $\psi_{d+1}$  preserves the $(d+1)$-th preimages of $\Omega_\lambda^1$. Equivalently, for any
 $\alpha\in\tau^{-d-1}\{1,\frac{1}{2}\}$,  we have  $\psi_{d+1}(\Omega_{\lambda_1}^{\alpha})=\Omega_{\lambda_2}^{\alpha}$.
The equality
 $\psi_{d+1}|_{f_{\lambda_1}^{-d}(B_{\lambda_1}^L)}=\psi_{d}|_{f_{\lambda_1}^{-d}(B_{\lambda_1}^L)}$
 follows by induction.

The maps $\psi_j$ form a normal family since their dilations are
uniformly bounded above.  Let  $\psi_\infty$ be the limit map of  $\psi_j$.
It is holomorphic in the Fatou set $F(f_{\lambda_1})=\cup_k f_{\lambda_1}^{-k}(B_{\lambda_1}^L)$ and  satisfies
$f_{\lambda_2}\circ \psi_{\infty}=\psi_\infty\circ f_{\lambda_1}$ in $F(f_{\lambda_1})$.
By continuity,    $\psi_\infty \circ f_{\lambda_1}=f_{\lambda_2}\circ\psi_\infty$ in $\mathbb{\widehat{C}}$.
By Theorem \ref{lm}, the
Lebesgue measure of $J(f_{\lambda_1})$ is zero, so $\psi_\infty$ is a
M\"obius map of the form $\psi_\infty(z)=az$. One may verify that
$a^{n-1}=1$ and $\lambda_2=a^2\lambda_1$.
The condition $\lambda_1,\lambda_2\in
\mathcal{F}$ implies
$\lambda_1=\lambda_2$. \hfill $\Box$


\begin{thm} \label{3g} $\partial \mathcal{H}_0$ is a Jordan curve.
\end{thm}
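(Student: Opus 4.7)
The plan is to use the conformal isomorphism $\Phi_0:\mathcal{H}_0\to\mathbb{C}\setminus\overline{\mathbb{D}}$ from Theorem \ref{1a} and show that its inverse extends to a homeomorphism of the closed complement $\overline{\mathbb{C}\setminus\overline{\mathbb{D}}}$ onto $\overline{\mathcal{H}_0}\subset\widehat{\mathbb{C}}$. By Carath\'eodory's theorem, this is equivalent to proving two things: (I) for every $t\in\mathbb{S}$ the impression $\mathcal{X}_t$ is a single point (so the parameter ray $\mathcal{R}_0(t)$ lands at a well-defined point $\nu(t)$ and $\partial\mathcal{H}_0$ is locally connected); and (II) the induced landing map $\nu:\mathbb{S}\to\partial\mathcal{H}_0$ is injective.

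For (I), the symmetries $\mathcal{X}_{t+1/(n-1)}=e^{2\pi i/(n-1)}\mathcal{X}_t$ and $\mathcal{X}_{1-t}=\{\bar\lambda:\lambda\in\mathcal{X}_t\}$ reduce matters to $t\in[0,\tfrac{1}{n-1})$ and $\lambda\in\mathcal{X}_t\cap\mathcal{F}_0$. By Lemma \ref{3b}, any such $\lambda$ satisfies either $C_\lambda\subset\partial B_\lambda$ or $\partial B_\lambda$ contains a parabolic cycle. Suppose first that $\lambda_1,\lambda_2\in\mathcal{X}_t\cap\mathcal{F}$ are both non-cusps. Then $v_{\lambda_i}^+\in\partial B_{\lambda_i}$, and Lemma \ref{3d} identifies $\theta(\lambda_i)=t/2$. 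Proposition \ref{3ff} then forces $\lambda_1=\lambda_2$, so $\mathcal{X}_t\cap\mathcal{F}$ contains at most one non-cusp.

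For (II), suppose $\nu(t_1)=\nu(t_2)=\lambda$, and by symmetry take $\lambda\in\mathcal{F}_0$. If $\lambda$ is not a cusp, Lemma \ref{3d} forces $t_1/2=\theta(\lambda)=t_2/2$, whence $t_1=t_2$. Combined with (I) we will then obtain a continuous injection $\nu:\mathbb{S}\to\partial\mathcal{H}_0$, and because $\mathbb{S}$ is compact and $\partial\mathcal{H}_0$ Hausdorff, $\nu$ is a homeomorphism onto its image $\partial\mathcal{H}_0$, which is therefore a Jordan curve.

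The main obstacle is the handling of cusps in both (I) and (II), since Proposition \ref{3ff} only applies when $v_\lambda^+\in\partial B_\lambda$, and a cusp $\lambda$ has $v_\lambda^+$ in the interior of the quadratic-like filled Julia set $K^+$ from Lemma \ref{3bb} rather than on $\partial B_\lambda$. The resolution is that cusps occur only at angles $t$ with $t/2\in\bigcup_p\{\theta:\tau^p\theta\equiv\theta\text{ or }\theta+\tfrac12\}$ (Remark \ref{4pp}), so for the generic angle $\mathcal{X}_t$ contains no cusp and (I), (II) follow directly from Proposition \ref{3ff} and Lemma \ref{3d}. For the remaining special angles one exploits parabolic rigidity: a parabolic cycle on $\partial B_\lambda$ with prescribed combinatorics (encoded by $t/2$ together with the period $p$) is preserved only at the unique cusp parameter in $\mathcal{F}$ with those combinatorics. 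This rigidity, combined with the continuous motion of the cut rays $\Omega_u^\alpha$ (Lemma \ref{3e}, Theorem \ref{cutreal}, Remark \ref{3gg}), prevents a cusp and any other boundary point from sharing an impression, and similarly prevents a single cusp from being the landing point of two distinct parameter rays. Assembling these cases with the symmetries completes the proof.
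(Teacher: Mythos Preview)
Your overall strategy matches the paper's: reduce to $t\in[0,\tfrac{1}{n-1})$, show each impression $\mathcal{X}_t$ is a singleton via Lemma~\ref{3d} and Proposition~\ref{3ff}, then prove injectivity of the landing map. The non-cusp part of your argument is essentially correct and coincides with the paper. The gap lies in your treatment of cusps, where you appeal to an unproved ``parabolic rigidity'' principle. The paper avoids this entirely, with two clean devices you are missing.

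For (I), once you know $\mathcal{X}_t\cap\mathcal{F}$ contains at most one non-cusp, you do not need any rigidity for the remaining cusps: there are only \emph{countably many} cusps altogether, so $\mathcal{X}_t$ is a countable connected set, hence a single point. This disposes of all $t\in(0,\tfrac{1}{n-1})$ at once, with no case analysis on whether $t/2$ is eventually periodic. You also omit the boundary angle $t=0$, where $\mathcal{X}_0$ meets the positive real axis $\partial\mathcal{F}_0\setminus\mathcal{F}$ and Proposition~\ref{3ff} (which requires $\lambda\in\mathcal{F}$) does not apply directly; the paper handles this by an explicit real computation locating the unique cusp $\lambda_*=\frac{n-1}{2n}\big(\frac{n+1}{2n}\big)^{\frac{n+1}{n-1}}$ on $\mathcal{R}_0(0)$ and then using Lemma~\ref{angle} to rule out any further point of $\mathcal{X}_0$ in $\mathcal{F}$.

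For (II), you only treat the non-cusp case. But Lemma~\ref{3d} already covers cusps: if $\lambda$ is a cusp and $\lambda\in\mathcal{X}_{t_1}=\mathcal{X}_{t_2}$, then both $R_\lambda(t_1/2)$ and $R_\lambda(t_2/2)$ land at the same point $\beta_\lambda\in\partial B_\lambda$. Since $\partial B_\lambda$ is a Jordan curve (Theorem~\ref{11a}), distinct external rays land at distinct points, forcing $t_1=t_2$. No separate rigidity argument is needed. Replacing your vague parabolic-rigidity step by these two observations (countability of cusps for (I); the cusp case of Lemma~\ref{3d} together with Theorem~\ref{11a} for (II)) closes the gap and yields exactly the paper's proof.
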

\begin{proof}
We first show that $\mathcal{X}_0$ is a singleton. 
To do this, first note that the parameter ray $\mathcal{R}_0(0)$ is contained in the real and positive axis. So $\mathcal{X}_0$ contains at least one  positive number. We define $g_\lambda(z)=z^n(f_\lambda(z)-z)=z^{2n}-z^{n+1}+\lambda$ for $\lambda,z>0$.
The positive critical point of $g_\lambda$ is $z_*=(\frac{n+1}{2n})^{\frac{1}{n-1}}$ and for all $z>z_*$, we have $g_\lambda'(z)>0$.
Let $\lambda_*$ solve $g_{\lambda_*}(z_*)=0$, then $\lambda_*=\frac{n-1}{2n}(\frac{n+1}{2n})^{\frac{n+1}{n-1}}$. For any $\lambda>\lambda_*$,
we have $g_\lambda>0$. In this case, for any $z>0$, we have $f_\lambda^k(z)\rightarrow\infty$ as $k\rightarrow\infty$. This implies $[0,+\infty)\subset B_\lambda$. In particular, $v_\lambda^+\in B_\lambda$. Thus $(\lambda_*,+\infty)\subset\mathcal{R}_0(0)$. On the other hand, we have
$f_{\lambda_*}(z_*)=z_*$ and $f_{\lambda_*}'(z_*)=1$. So $\lambda_*$ is a cusp and $\lambda_*\in \mathcal{X}_0$.
Moreover, by elementary properties of real functions, there is a small number $\epsilon>0$ such that for all $\lambda\in(\lambda_*-\epsilon,\lambda_*)$,
 the map $f_\lambda$ has an attracting cycle. So $(\lambda_*-\epsilon,\lambda_*)$ is contained in a hyperbolic component (see Theorem \ref{6a}) and $(\lambda_*-\epsilon,\lambda_*)\cap\mathcal{X}_0=\emptyset$.
   If $\mathcal{X}_0\setminus \{\lambda_*\}\neq\emptyset$, then there is $\lambda\in \mathcal{X}_0\cap\mathcal{F}$
 which is not a cusp. By Lemma \ref{angle}, we have $0<\theta(\lambda)<\frac{1}{2(n-1)}$. However by Lemma \ref{3d}, we have
 $\theta(\lambda)=0$. This leads to a contradiction.

In the following, we assume $t\in(0,\frac{1}{n-1})$. Take two parameters  $\lambda_1,\lambda_2\in\mathcal{X}_t\cap\mathcal{F}$ which are not cusps,  it follows from  Lemma \ref{3d} that $\theta(\lambda_1)=\theta(\lambda_2)=t/2$. By Proposition \ref{3ff} we have $\lambda_1=\lambda_2$. Since there are countably many cusps,  the impression $\mathcal{X}_t$ is necessarily a
singleton. So $\partial \mathcal{H}_0$
is locally connected.

If there are two different angles $t_1,t_2\in
[0,\frac{1}{n-1})$ with
$\mathcal{X}_{t_1}=\mathcal{X}_{t_2}=\{\lambda\}$, then by Lemma
\ref{3d}, the external rays $R_\lambda(t_1/2)$ and
$R_\lambda(t_2/2)$ land at the same point on $\partial B_\lambda$.
But this is a contradiction since $\partial B_\lambda$ is a Jordan
curve (Theorem \ref{11a}).
\end{proof}

Theorem \ref{3g} has several consequences.
First, one gets a canonical parameterization $\nu:\mathbb{S}\rightarrow \partial \mathcal{H}_0$, where
$\nu(\theta)$ is defined to be the landing point of the parameter ray $\mathcal{R}_0(\theta)$ (namely,
$\nu(\theta):=\lim_{r\rightarrow 1^+}\Phi_0^{-1}(re^{2\pi i \theta}$)).

\begin{thm}\label{cuspc} $\nu(\theta)$ is a cusp if and only if $\theta$ is $\tau$-periodic.
\end{thm}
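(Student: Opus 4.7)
The statement is an equivalence; I handle the two directions separately. The forward direction is a direct combinatorial consequence of results already in hand, while the backward direction requires a rigidity argument for the landing of external rays at periodic points.

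\textbf{Forward direction.} Assume $\lambda_0 = \nu(\theta)$ is a cusp. By Lemma \ref{3d}(2), the dynamical ray $R_{\lambda_0}(\theta/2)$ lands at the point $\beta_{\lambda_0}$ of Lemma \ref{3bb}; by Remark \ref{4pp}, one of $\beta_{\lambda_0}$, $-\beta_{\lambda_0}$ is a parabolic periodic point of $f_{\lambda_0}$ of some period $p\ge 1$. Hence the angle $\theta/2$ of the landing dynamical ray satisfies either $\tau^p(\theta/2) \equiv \theta/2$ or $\tau^p(\theta/2) \equiv \theta/2 + 1/2$ modulo $1$. Multiplying either congruence by $2$ yields $(n^p-1)\theta\in\mathbb{Z}$. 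Writing $\theta = a/b$ in lowest terms forces $b\mid n^p-1$, hence $\gcd(b,n)=1$; then $n^q\theta \equiv \theta$ modulo $1$ for $q$ equal to the multiplicative order of $n$ modulo $b$, so $\theta$ is $\tau$-periodic.

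\textbf{Backward direction.} Suppose $n^p\theta\equiv\theta$ modulo $1$ and assume for contradiction that $\lambda_0:=\nu(\theta)$ is not a cusp. By Lemma \ref{3b}, $C_{\lambda_0}\subset\partial B_{\lambda_0}$; by Lemma \ref{3d}(1), the ray $R_{\lambda_0}(\theta/2)$ lands at $v_{\lambda_0}^+$. Using $R_\lambda(t+1/2) = -R_\lambda(t)$ together with the symmetry $f_\lambda(-z)=(-1)^nf_\lambda(z)$, a case analysis on the parity of $n$ and on whether $\tau^p(\theta/2) = \theta/2$ or $\theta/2+1/2$ shows that the forward orbit of $v_{\lambda_0}^+$ under $f_{\lambda_0}$ eventually enters a periodic cycle $C^\ast\subset\partial B_{\lambda_0}$ of some period $q$, and that $C^\ast$ contains no critical point of $f_{\lambda_0}$ (a superattracting cycle could not lie in $J(f_{\lambda_0})$). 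Writing $\mu_0$ for the multiplier of $C^\ast$, we have $|\mu_0|\ge 1$ since $C^\ast\subset J(f_{\lambda_0})$, and $\mu_0\neq 1$ since $\lambda_0$ is not a cusp. The plan is now a rigidity argument in the spirit of Milnor's orbit portraits. Since $\mu_0\neq 1$, the implicit function theorem furnishes a holomorphic continuation $\lambda\mapsto z(\lambda)$ of the periodic point with $z(\lambda_0)\in C^\ast$ on a neighborhood $U$ of $\lambda_0$. Using the holomorphic motion of cut rays (Lemma \ref{3e}) together with continuity of landings where $\partial B_\lambda$ is a quasi-circle (Theorem \ref{11a}), one shows that $R_\lambda(\theta/2)$ lands at $z(\lambda)$ for $\lambda\in U\cap\mathcal{F}_0$ off the cusp locus. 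On the other hand, along the parameter ray $\mathcal{R}_0(\theta)\subset\mathcal{H}_0$, the identity $\Phi_0(\lambda)=\phi_\lambda(v_\lambda^+)^2 = re^{2\pi i\theta}$ places $v_\lambda^+$ on the dynamical ray $R_\lambda(\theta/2)$ at B\"ottcher radius $\sqrt r$, so $v_\lambda^+\to v_{\lambda_0}^+ = z(\lambda_0)$ along that ray as $r\to 1^+$. The asymptotic rate of approach is dictated by the conformal dynamics of $\phi_\lambda$ on $B_\lambda$, whereas the local dynamics of $f_{\lambda_0}^q$ at $z(\lambda_0)$ is conjugate to multiplication by $\mu_0$; matching the two via $\phi_\lambda\circ f_\lambda = \phi_\lambda^n$ forces $\mu_0=1$, contradicting $\mu_0\neq 1$.

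\textbf{Main obstacle.} The principal difficulty is the rate-matching step in the backward direction: quantifying how $v_\lambda^+$ approaches $v_{\lambda_0}^+$ along $\mathcal{R}_0(\theta)$ in terms of the multiplier $\mu_0$ of the cycle $C^\ast$. The natural framework is Milnor's orbit-portrait theory relating combinatorics of external ray landings to multipliers of periodic cycles, and its adaptation to the McMullen setting rests essentially on the quasi-circle regularity of $\partial B_\lambda$ (Theorem \ref{11a}) and the holomorphic motion of rays and cut rays developed earlier. It is precisely this rigidity that distinguishes $\tau$-periodic from $\tau$-preperiodic parameter angles, the latter producing Misiurewicz parameters where the multiplier is not forced to equal $1$.
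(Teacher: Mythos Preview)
Your forward direction is correct and essentially matches the paper's argument.

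In the backward direction there is a genuine gap---indeed, you walk right past the contradiction. From Lemma~\ref{3d}(1) and the Jordan-curve property of $\partial B_{\lambda_0}$ you correctly obtain $f_{\lambda_0}^p(v_{\lambda_0}^+)\in\{v_{\lambda_0}^+,v_{\lambda_0}^-\}$, and your case analysis (done correctly) shows that one of $v_{\lambda_0}^{\pm}$ is $f_{\lambda_0}$-periodic, so the cycle $C^*$ contains it. You then assert that $C^*$ contains no critical point. This is \emph{false}, and its falsity is exactly the contradiction: the equation $f_{\lambda_0}(z)=v_{\lambda_0}^{\pm}$ rewrites as $(z^n\mp\sqrt{\lambda_0})^2=0$, so every $f_{\lambda_0}$-preimage of $v_{\lambda_0}^{\pm}$ lies in $C_{\lambda_0}$. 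Hence if $v_{\lambda_0}^{\varepsilon}\in C^*$ has period $q$, the point $f_{\lambda_0}^{q-1}(v_{\lambda_0}^{\varepsilon})\in C^*$ is a preimage of $v_{\lambda_0}^{\varepsilon}$ and therefore a critical point; a periodic critical point is superattracting, hence in the Fatou set, contradicting $C^*\subset\partial B_{\lambda_0}\subset J(f_{\lambda_0})$. That is the entire backward direction, exactly as the paper does it.

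Your multiplier rate-matching program is thus unnecessary, and as written it is also not a proof: for $\lambda\in\mathcal{H}_0$ the Julia set is a Cantor set, so there is no Jordan-curve $\partial B_\lambda$ on which to invoke Theorem~\ref{11a} for ray landings near $\lambda_0$, and the final ``matching forces $\mu_0=1$'' step is a heuristic without a mechanism.
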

\begin{proof}

By Theorem \ref{3g}, we see that
$\nu(0)=\frac{n-1}{2n}(\frac{n+1}{2n})^{\frac{n+1}{n-1}}$ is a cusp.

Note that  $\nu(\theta+\frac{1}{n-1})=e^{2\pi i/(n-1)}\nu(\theta)$ and $(-1)^nf_{\nu(\theta+\frac{1}{n-1})}({e}^{\pi i/(n-1)}z)
={e}^{\pi i/(n-1)}f_{\nu(\theta)}
(z)$, thus $\nu(\theta)$ is a cusp if and only if $\nu(\theta+\frac{1}{n-1})$ is a cusp.
For this, we assume  $\theta\in (0,\frac{1}{n-1})$.

 If $\nu(\theta)$  is a cusp, then by Lemma \ref{3d}, the external ray
$R_{\nu(\theta)}(\theta/2)$ lands at $\beta_{\nu(\theta)}$. By Remark \ref{4pp},  $\frac{\theta}{2}$ satisfies either $\tau^p(\frac{\theta}{2})\equiv \frac{\theta}{2}$ or $\tau^p(\frac{\theta}{2})\equiv \frac{\theta}{2}+\frac{1}{2}$ for some $p\geq1$.
In either case, $\theta$ is $\tau$-periodic.

Conversely, we assume $\theta$ is $\tau$-periodic.
 If $\nu(\theta)$ is not a cusp, then by Lemma \ref{3d}, the external ray
$R_{\nu(\theta)}(\frac{\theta}{2})$ lands at $v_{\nu(\theta)}^+$. Note that $\frac{\theta}{2}$
satisfies either $\tau^p(\frac{\theta}{2})=\frac{\theta}{2}$ or
$\tau^p(\frac{\theta}{2})=\frac{\theta}{2}+\frac{1}{2}$ for some $p\geq1$. We have that either $f_{\nu(\theta)}^p(v_{\nu(\theta)}^+)=v_{\nu(\theta)}^+$ or
$f_{\nu(\theta)}^p(v_{\nu(\theta)}^+)=v_{\nu(\theta)}^-$. In the former case, we get a periodic critical point $c\in f_{\nu(\theta)}^{-1}(v_{\nu(\theta)}^+)$; in the latter case, we get a periodic critical point $c\in f_{\nu(\theta)}^{-1}(v_{\nu(\theta)}^-)$. These critical points will be in the Fatou set.
But this  contradicts  $v_{\nu(\theta)}^+\in \partial B_{\nu(\theta)}$.
\end{proof}

\begin{rmk} \label{classi} As a consequence of Lemma \ref{3d} and Theorem \ref{cuspc},

1. If $\theta$ is  $\tau$-periodic, then $\nu(\theta)$ is a cusp;

2. If $\theta$ is rational but not $\tau$-periodic, then $f_{\nu(\theta)}$ is postcritically finite;

3. If $\theta$ is irrational, then $f_{\nu(\theta)}$ is postcritically infinite.

In the last two cases, one has $C_{\nu(\theta)}\subset \partial B_{\nu(\theta)}$. Moreover, by Borel's normal number theorem, for almost all
$\theta\in(0,1]$, we have $\overline{\cup_{k\geq1}f_{\nu(\theta)}^k(C_{\nu(\theta)})}=\partial B_{\nu(\theta)}$.
\end{rmk}

\begin{pro}\label{hm} Set $\partial B_0=\mathbb{S}$ and $\mathcal{V}=\mathbb{C}\setminus \overline{\mathcal{H}_0}$, then there is a holomorphic motion $H:\mathcal{V}\times \mathbb{S}\rightarrow\mathbb{C}$
parameterized by $\mathcal{V}$ and with base point $0$ such that
$H(\lambda, \mathbb{S})=\partial B_\lambda$ for all $\lambda\in \mathcal{V}$.
\end{pro}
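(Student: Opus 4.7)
The strategy is to construct $H$ as the boundary extension of the inverse B\"ottcher coordinate via S\l{}odkowski's $\lambda$-lemma.

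First, I would verify that $\mathcal{V}$ is a simply connected Jordan domain containing $0$. By Theorem \ref{3g} together with the expansion $\Phi_0(\lambda)=4\lambda+\mathcal{O}(\lambda^{2-n})$ near infinity, the conformal isomorphism $\Phi_0$ extends to $\mathcal{H}_0\cup\{\infty\}\to\mathbb{\widehat{C}}\setminus\overline{\mathbb{D}}$, so $\partial\mathcal{H}_0$ is a Jordan curve on the sphere bounding $\mathcal{V}$ as a Jordan domain in $\mathbb{C}$. A punctured neighborhood of $0$ lies in the McMullen domain $\mathcal{H}_2$, which is disjoint from $\overline{\mathcal{H}_0}$, so $0\in\mathcal{V}$.

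Next, I would construct a holomorphic motion on the complement of the closed disk using the B\"ottcher coordinate. For each $\lambda\in\mathcal{V}\setminus\{0\}\subset\mathbb{C}^*\setminus\mathcal{H}_0$, $B_\lambda$ is simply connected and $\phi_\lambda$ extends to a conformal isomorphism $B_\lambda\to\mathbb{\widehat{C}}\setminus\overline{\mathbb{D}}$, while at $\lambda=0$ we have $f_0(z)=z^n$ and $\phi_0$ is the identity on $B_0=\mathbb{\widehat{C}}\setminus\overline{\mathbb{D}}$. The series expansion $\phi_\lambda(z)=\sum_{k\geq 0}a_k(\lambda)z^{1-2kn}$ recorded in Section \ref{escape} shows that the coefficients $a_k$ are holomorphic in $\lambda$, hence so is $\phi_\lambda^{-1}(w)$ for each fixed $w$. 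Define
\[
H_0:\mathcal{V}\times(\mathbb{\widehat{C}}\setminus\overline{\mathbb{D}})\to\mathbb{\widehat{C}},\qquad H_0(\lambda,w)=\phi_\lambda^{-1}(w).
\]
Then $H_0(0,w)=w$, $\lambda\mapsto H_0(\lambda,w)$ is holomorphic for each fixed $w$, and $H_0(\lambda,\cdot)$ is injective since it is a conformal map. Thus $H_0$ is a holomorphic motion of $\mathbb{\widehat{C}}\setminus\overline{\mathbb{D}}$ over $\mathcal{V}$ with base point $0$.

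Since $\mathcal{V}$ is simply connected, S\l{}odkowski's theorem extends $H_0$ to a holomorphic motion $\tilde H:\mathcal{V}\times\mathbb{\widehat{C}}\to\mathbb{\widehat{C}}$ for which each $\tilde H(\lambda,\cdot)$ is a quasiconformal homeomorphism of the sphere. Setting $H:=\tilde H|_{\mathcal{V}\times\mathbb{S}}$ yields the required motion; and since $\tilde H(\lambda,\cdot)$ is a homeomorphism of $\mathbb{\widehat{C}}$ that carries $\mathbb{\widehat{C}}\setminus\overline{\mathbb{D}}$ bijectively onto $B_\lambda$, invariance of the topological boundary gives $H(\lambda,\mathbb{S})=\partial B_\lambda$. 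The argument is essentially soft: the only substantive input is the Jordan-curve property from Theorem \ref{3g}, used to guarantee simple connectivity of $\mathcal{V}$ so that S\l{}odkowski's theorem applies in its standard form; without this the extension step would be the real obstacle.
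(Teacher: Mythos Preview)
Your argument is correct and follows the same route as the paper: define the holomorphic motion by $H_0(\lambda,w)=\phi_\lambda^{-1}(w)$ on $\mathbb{\widehat{C}}\setminus\overline{\mathbb{D}}$, then extend to $\mathbb{S}$ via S\l{}odkowski, using the Jordan--curve result Theorem~\ref{3g} to guarantee that $\mathcal{V}$ is simply connected. The only real difference lies in how the base point $\lambda=0$ is handled. The paper takes an indirect path: it first follows the repelling periodic points of $f_0(z)=z^n$ holomorphically into $\mathcal{H}_2\cup\{0\}$, deduces that $\partial B_\lambda\to\mathbb{S}$ as $\lambda\to 0$, and then invokes Carath\'eodory kernel convergence to obtain continuity (hence, by the removable singularity theorem, holomorphy) of $\phi_\lambda^{-1}(w)$ across $\lambda=0$. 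Your appeal to the explicit expansion $\phi_\lambda(z)=\sum_{k\ge0}a_k(\lambda)z^{1-2kn}$ with $a_k$ polynomial in $\lambda$ is more direct and cleaner: it shows at once that $\phi_\lambda$ degenerates to the identity at $\lambda=0$, so there is no need for the periodic--point detour.

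One point deserves a sentence of elaboration: the series is only valid for large $|z|$, so the implication ``$a_k$ holomorphic $\Rightarrow$ $\phi_\lambda^{-1}(w)$ holomorphic for every fixed $|w|>1$'' requires pulling back by the functional equation $f_\lambda\circ\phi_\lambda^{-1}(w)=\phi_\lambda^{-1}(w^n)$. This works because for $\lambda\in\mathcal{V}$ the free critical values lie outside $B_\lambda$, so the branch of $f_\lambda^{-1}$ staying in $B_\lambda$ is well defined and depends holomorphically on $\lambda$ (including at $\lambda=0$, where the $n$ preimages in $B_\lambda$ limit to the ordinary $n$-th roots). With that small addition your proof is complete and arguably simpler than the paper's.
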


\begin{proof}
We first prove that every repelling periodic point  of $f_0(z)=z^n$
moves holomorphically in $\mathcal{H}_2\cup\{0\}$.  Let $z_0\in\mathbb{S}=J(f_0)$ be
such a point with period $k$. For small $\lambda$, the map $f_\lambda$ is a
 perturbation of $f_0$. By implicit function theorem, there is
a neighborhood $\mathcal{U}_0$ of $0$ such that $z_0$ becomes a repelling
point $z_\lambda$ of $f_\lambda$ with the same period $k$, for all $\lambda\in \mathcal{U}_0$.
On the other hand, for all $\lambda\in\mathcal{H}_2$,  each repelling cycle of $f_\lambda$ moves
holomorphically throughout $\mathcal{H}_2$ (see \cite{Mc4}, Theorem 4.2).

Since $\mathcal{H}_2\cup\{0\}$ is simply connected, by Monodromy theorem, there is a holomorphic map
$Z_{z_0}:\mathcal{H}_2\cup\{0\}\rightarrow \mathbb{C}$ such that $Z_{z_0}(\lambda)=z_\lambda$ for $\lambda\in
\mathcal{U}_0$.  Let $\textup{Per}{(f_0)}$ be all  repelling periodic points of $f_0$.
One may verify that 
  the map $y:\mathcal{H}_2\cup\{0\}\times \textup{Per}{(f_0)}\rightarrow
\mathbb{C}$ defined by $y(\lambda,z)=Z_{z}(\lambda)$ is a holomorphic motion.
Note that $\mathbb{S}=\overline{\textup{Per}{(f_0)}}$, by $\lambda$-Lemma  (see \cite{MSS} or \cite{Mc4}), there is an extension of $y$, say
$Y:\mathcal{H}_2\cup\{0\}\times \mathbb{S}\rightarrow\mathbb{C}$. It's obvious
that $Y(\lambda, \mathbb{S})$ is a connected component of $J(f_\lambda)$.

Now, we show $Y(\lambda, \mathbb{S})=\partial B_\lambda$ for all $\lambda\in
\mathcal{H}_2\cup\{0\}$. By the uniqueness of the holomorphic motion of
hyperbolic Julia sets, it suffices to show $Y(\lambda,
\mathbb{S})=\partial B_\lambda$ for small and real parameter $\lambda\in
(0,\epsilon)$, where $\epsilon>0$. To see this, note that when $\lambda\in(0,\epsilon)$
 the fixed point $p_0=1$ of $f_0$ becomes the repelling fixed points
$p_\lambda$ of $f_\lambda$, which is real and close to $1$. The map $f_\lambda$ has
exactly two real and positive fixed points. One is $p_\lambda$ and the
other is $p^*_\lambda$, which is near $0$.  It's obvious that $p_{\lambda}$ is
the landing point of the zero external ray of $f_\lambda$. So
$Y(\lambda,1)=p_{\lambda}\in\partial B_\lambda$. This implies $Y(\lambda,
\mathbb{S})=\partial B_\lambda$ for all $\lambda\in (0,\epsilon)$.

By above argument and Carath\'eodory convergence theorem, the map  $h:\mathcal{V}\times (\mathbb{\widehat{C}}\setminus
\mathbb{\overline{D}})$ defined by $h(u,z)=\phi_u^{-1}(z)$ if $u\in
\mathcal{U}\setminus\{0\}$ and $h(0,z)=z$.
 is a holomorphic motion of $\mathbb{\widehat{C}}\setminus
\mathbb{\overline{D}}$ when $u$
varies in $\mathcal{V}$.
By Slodkowski's theorem (see \cite{GJW} or \cite{Sl}), there is a holomorphic motion $H:\mathcal{V}\times
\mathbb{\widehat{C}}$ extending $h$ and for any $v\in \mathcal{V}$,
we have $H(v,\mathbb{S})=\partial B_v$.
\end{proof}

\begin{thm}\label{boundary} $\lambda\in \partial \mathcal{H}_0$ if and
only if $\partial B_\lambda$ contains either the critical set
$C_\lambda$ or a parabolic cycle of  $f_\lambda$.
\end{thm}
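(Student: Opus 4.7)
Since $(\Rightarrow)$ is Lemma~\ref{3b}, the plan is to prove the converse: assuming $\partial B_\lambda$ contains $C_\lambda$ or a parabolic cycle, conclude $\lambda \in \partial \mathcal{H}_0$. Neither hypothesis is compatible with $\lambda \in \mathcal{H}_0$ (where $v_\lambda^+ \in B_\lambda$ and $f_\lambda$ is hyperbolic), so the goal reduces to excluding $\lambda \in \mathcal{V} := \mathbb{C} \setminus \overline{\mathcal{H}_0}$ in each of the two cases.

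For the parabolic case, I will combine Proposition~\ref{hm} with the maximum modulus principle. If $\lambda_0 \in \mathcal{V}$ carries a parabolic period-$k$ point $p_0 \in \partial B_{\lambda_0}$, then the holomorphic motion $H$ of Proposition~\ref{hm}---which extends the dynamical semiconjugacy $\phi_\lambda^{-1}(z^n) = f_\lambda\circ \phi_\lambda^{-1}(z)$ continuously to $\mathbb{S}$---lets me fix the $(n^k-1)$-th root of unity $z_0 \in \mathbb{S}$ with $H(\lambda_0, z_0) = p_0$ and define $p_\lambda := H(\lambda, z_0)$, a holomorphic family of period-$k$ points of $f_\lambda$ with $p_\lambda \in \partial B_\lambda \subset J(f_\lambda)$ for every $\lambda \in \mathcal{V}$. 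Then $\mu(\lambda) := (f_\lambda^k)'(p_\lambda)$ is holomorphic on the connected open set $\mathcal{V}$ with $|\mu(\lambda)| \geq 1$ throughout and $\mu(0) = n^k$. Applying the maximum modulus principle to $1/\mu$, the strict inequality $|1/\mu(0)| = n^{-k} < 1$ propagates to all of $\mathcal{V}$, yielding $|\mu(\lambda_0)| > 1$ and contradicting parabolicity.

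For the critical-set case, suppose $\lambda_0 \in \mathcal{V} \cap \mathcal{F}$ satisfies $C_{\lambda_0} \subset \partial B_{\lambda_0}$, and set $\theta_0 := \theta(\lambda_0) \in (0, \tfrac{1}{2(n-1)})$ by Lemma~\ref{angle}. If $\nu(2\theta_0)$ is not a cusp, then Lemma~\ref{3d} gives $\theta(\nu(2\theta_0)) = \theta_0$, and Proposition~\ref{3ff} identifies $\lambda_0 = \nu(2\theta_0) \in \partial \mathcal{H}_0$, a contradiction. Otherwise $\nu(2\theta_0)$ is a cusp, so by Theorem~\ref{cuspc} the angle $2\theta_0$ is $\tau$-periodic; following the action of $f_{\lambda_0}^p$ on the external ray $R_{\lambda_0}(\theta_0)$ then shows that $v_{\lambda_0}^+$ is $f_{\lambda_0}$-periodic of some period $P$.

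The final step, and what I expect to be the main obstacle, is to rule out this cusp sub-case where Proposition~\ref{3ff} is unavailable. Here the defining identity $(v_\lambda^+)^2 = 4\lambda$ of the McMullen family should save the day: writing the cycle as $v_{\lambda_0}^+ \to z_1 \to \cdots \to z_{P-1} \to v_{\lambda_0}^{\pm}$ and solving $f_{\lambda_0}(z_{P-1}) = \pm v_{\lambda_0}^+$ as the quadratic $u^2 \mp v_{\lambda_0}^+ u + \lambda_0 = 0$ in $u = z_{P-1}^n$, the discriminant $(v_{\lambda_0}^+)^2 - 4\lambda_0$ vanishes identically. The resulting double root $u = \pm v_{\lambda_0}^+/2$ forces $z_{P-1}^{2n} = \lambda_0$, i.e., $z_{P-1} \in C_{\lambda_0}$, so the cycle multiplier vanishes, the cycle is super-attracting, and $v_{\lambda_0}^+$ lies in the Fatou set---contradicting $v_{\lambda_0}^+ \in \partial B_{\lambda_0} \subset J(f_{\lambda_0})$.
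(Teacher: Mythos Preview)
Your proof is correct. For the critical-set case you follow essentially the paper's route: the paper also reduces to showing $2\theta(\lambda)$ is not $\tau$-periodic, then applies Proposition~\ref{3ff} to identify $\lambda$ with $\nu(2\theta(\lambda))$. Where the paper simply writes ``similar to the proof of Theorem~\ref{cuspc}'' to dismiss the periodic sub-case, you spell out the discriminant computation explicitly---and indeed that is exactly the observation hidden in the proof of Theorem~\ref{cuspc}, namely that every $f_\lambda$-preimage of $v_\lambda^\pm$ lies in $C_\lambda$.

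Your parabolic case genuinely differs from the paper's. The paper invokes Lemma~\ref{3bb} to embed a quasiconformal copy of the $z\mapsto z^2+1/4$ Julia set into $J(f_{\lambda_0})$, concludes that $\partial B_{\lambda_0}$ is not a quasi-circle, and then contrasts this with the consequence of Proposition~\ref{hm} that $\partial B_\lambda$ \emph{is} a quasi-circle for every $\lambda\in\mathcal V$. You instead use Proposition~\ref{hm} to follow the parabolic point holomorphically over $\mathcal V$ and apply the maximum principle to $1/\mu$; since $\mathcal V$ is a disk (Theorem~\ref{3g}) containing the base point $0$ where $\mu(0)=n^k>1$, and since $p_\lambda\in\partial B_\lambda\subset J(f_\lambda)$ forces $|\mu(\lambda)|\geq 1$ throughout, you get $|\mu(\lambda_0)|>1$. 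This bypasses Lemma~\ref{3bb} entirely and is arguably more elementary; the paper's route, on the other hand, gives the extra qualitative information that a parabolic point on $\partial B_\lambda$ forces $\partial B_\lambda$ to fail quasi-circularity.
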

\begin{proof} By Lemma \ref{3b}, it suffices to prove the `if' part.

We first assume that $f_\lambda$ has a parabolic cycle on $\partial
B_\lambda$. By Lemma \ref{3bb},  the Julia set
$J(f_\lambda)$ contains a quasiconformal copy of the Julia set  of $z\mapsto z^2+1/4$. So the boundary $\partial
B_\lambda$ is not a quasi-circle. It follows from Proposition \ref{hm} that for all $v\in \mathbb{C}\setminus \overline{\mathcal{H}_0}$,
$\partial B_v$ is a quasi-circle. Thus  $\lambda\in\partial\mathcal{H}_0$.

Now assume $\lambda\in \mathcal{F}$ and  $C_\lambda\subset
\partial B_\lambda$. Recall that $\theta(\lambda)$ is defined such that
$R_\lambda(\theta(\lambda))$ lands at $v_\lambda^+$. By Lemma \ref{angle}, we have
$0< \theta(\lambda)<\frac{1}{2(n-1)}$.

  Similar to the proof of Theorem \ref{cuspc}, we conclude that $2\theta(\lambda)$ is not $\tau$-periodic.
Then  $\lambda'=\nu(2\theta(\lambda))\in\mathcal{F}$ is not a cusp (Theorem \ref{cuspc}). It satisfies $\theta(\lambda')=\theta(\lambda)$.
It follows from Proposition \ref{3ff} that $\lambda=\lambda'\in
\partial \mathcal{H}_0$.
\end{proof}


\section{Sierpi\'nski holes are Jordan domains}\label{sier}

Besides $\mathcal{H}_0$, there are two kinds of escape domains: the McMullen domain $\mathcal{H}_2$ and the Sierpinski locus $\mathcal{H}_k, k\geq3$. In \cite{D1}, Devaney  showed that the boundary $\partial \mathcal{H}_2$ is a Jordan curve by constructing of a sequence of analytic curves converging to it. In this section, we will show that the boundary of  every Sierpinski hole is  a Jordan curve. We remark that our approach also applies to $\partial\mathcal{H}_2$.  This will yield a different proof from Devaney's. An interesting fact is that our  proof relies on the boundary regularity of  $\partial \mathcal{H}_0$.

Let ${\mathcal{H}}$ be a escape domain of level $k\geq3$. It has no intersection with $\mathbb{R}^+:=(0,+\infty)$. (In fact, by elementary properties of real functions, one may verify that there is a positive parameter $\lambda^*\in(0,\nu(0))$ such that $(0,\lambda^*)\subset \mathcal{H}_2$ and for all $\lambda\in[\lambda^*, \nu(0)]$, the critical orbit of $f_{\lambda}$ remains bounded,  in that case, $f_\lambda$ is renormalizable,  see \cite{WQY} Lemma 7.5).

The relation
 ${e}^{\pi i/(n-1)}f_\lambda
(z)=(-1)^nf_{{e}^{2\pi i/(n-1)}\lambda}({e}^{\pi i/(n-1)}z)$
 implies that $e^{2\pi/(n-1)}\mathcal{H}_k=\mathcal{H}_k$.
 So we may assume $\mathcal{H}\subset \mathcal{F}$. The relation  $\overline{f_\lambda(\bar{z})}=f_{\bar{\lambda}}(z)$ implies that $\mathcal{H}_k$ is symmetric about the real axis. We may assume further:
 either $\mathcal{H}$ is symmetric about $\{\lambda\in \mathbb{C}^*; \arg\lambda =\frac{\pi}{n-1}\}$ or $\mathcal{H}\subset \{\lambda\in \mathbb{C}^*; 0<\arg \lambda <\frac{\pi}{n-1} \}$.

   The parameter ray $\mathcal{R}_{\mathcal{H}}(t)$ of angle $t\in(0,1]$ in $\mathcal{H}$ is defined by $\mathcal{R}_\mathcal{H}(t):=\Phi_\mathcal{H}^{-1}((0,1)e^{2\pi i t})$, its impression  $\mathcal{X}_{\mathcal{H}}(t)$   is defined by
$$\mathcal{X}_{\mathcal{H}}(t):=\cap_{j\geq1}\overline{\Phi_\mathcal{H}^{-1}(\{re^{2\pi i \theta}; 1-1/j<r<1,
|\theta-t|<{1}/{j}\}}).$$

When $\lambda$ ranges over $\mathcal{\overline{H}}$, the preimages $f_\lambda^{2-k}(0)$ move continuously and $f^{k-2}_\lambda$ maps each component of  $f^{2-k}_\lambda(T_\lambda)$ conformally onto $T_\lambda$.
 Let $U_\lambda$ be the component of $f^{2-k}_\lambda(T_\lambda)$ containing $v_\lambda^+$ and $g_\lambda$ be the inverse of $f^{k-2}_\lambda|_{U_\lambda}$. Both $g_\lambda(0)$ and
$U_\lambda$ moves continuously for $\lambda\in \mathcal{\overline{H}}$ (and holomorphically in $ \mathcal{H}$). The internal ray $R_{U_\lambda}(t)$ of angle $t$ in $U_\lambda$ is defined by $R_{U_\lambda}(t):=g_\lambda(R_{T_\lambda}(t))$.

\begin{lem}\label{5a} For  any integer $p\geq0$, the set $f_\lambda^{-p}(\overline{B}_\lambda)$ moves continuously (in Hausdorff topology) with respect to $\lambda\in \mathbb{C}^*\setminus \overline{\mathcal{H}_0}$.
\end{lem}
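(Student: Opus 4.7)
I will proceed by induction on $p$, using Proposition~\ref{hm} for the base case and a Hurwitz-type pullback argument for the inductive step. The base case $p=0$ asserts that $\overline{B}_\lambda$ is Hausdorff continuous. By Proposition~\ref{hm} and Slodkowski's theorem (which was already invoked in its proof), there is a holomorphic motion $H:\mathcal{V}\times\mathbb{\widehat{C}}\to\mathbb{\widehat{C}}$ extending the motion of $\partial B_\lambda$, and since each $H_\lambda$ is a homeomorphism of $\mathbb{\widehat{C}}$ carrying the unbounded complementary component of $\mathbb{S}$ to the unbounded complementary component of $\partial B_\lambda$, it carries $\overline{B}_0=\{|z|\geq 1\}\cup\{\infty\}$ to $\overline{B}_\lambda$. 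Continuous dependence of $H(\lambda,z)$ on $\lambda$, uniform on the compact set $\mathbb{\widehat{C}}$ by the $\lambda$-lemma, delivers the required Hausdorff continuity.

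For the inductive step, set $K_p(\lambda):=f_\lambda^{-p}(\overline{B}_\lambda)$ and suppose $K_p$ is Hausdorff continuous on $\mathbb{C}^*\setminus\overline{\mathcal{H}_0}$. Fix $\lambda_0$ and a sequence $\lambda_n\to\lambda_0$ in this set. Upper semicontinuity of $K_{p+1}$ at $\lambda_0$ is routine: if $z_n\in K_{p+1}(\lambda_n)$ with $z_n\to z$, joint continuity of $(\lambda,u)\mapsto f_\lambda^{p+1}(u)$ gives $f_{\lambda_n}^{p+1}(z_n)\to f_{\lambda_0}^{p+1}(z)$, and the upper semicontinuity of $\overline{B}_\lambda$ from the base case forces the limit into $\overline{B}_{\lambda_0}$, so $z\in K_{p+1}(\lambda_0)$.

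Lower semicontinuity is the heart of the argument. Fix $z\in K_{p+1}(\lambda_0)$ and set $w=f_{\lambda_0}^{p+1}(z)\in\overline{B}_{\lambda_0}$. If $w\in B_{\lambda_0}$, choose $K$ so that $f_{\lambda_0}^{K}(w)$ lies deep in the B\"ottcher chart, say in $\{|\phi_{\lambda_0}|>2L\}$; by continuous dependence of $f_\lambda^{K+p+1}$ and of $\phi_\lambda$ on $\lambda$, the point $z$ itself lies in $K_{p+1}(\lambda)$ for all $\lambda$ near $\lambda_0$, so the constant sequence $z_n=z$ works. If instead $w\in\partial B_{\lambda_0}$, transport $w$ along the motion by setting $w_\lambda:=H(\lambda,H_{\lambda_0}^{-1}(w))\in\partial B_\lambda$; then $F_\lambda(u):=f_\lambda^{p+1}(u)-w_\lambda$ is holomorphic in $u$, non-constant, and satisfies $F_{\lambda_0}(z)=0$, while $F_{\lambda_n}\to F_{\lambda_0}$ uniformly on compact subsets of $\mathbb{\widehat{C}}$. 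By Hurwitz's theorem, for every neighborhood of $z$ the function $F_{\lambda_n}$ eventually has a zero there, producing $z_n\to z$ with $f_{\lambda_n}^{p+1}(z_n)=w_{\lambda_n}\in\overline{B}_{\lambda_n}$, hence $z_n\in K_{p+1}(\lambda_n)$.

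The main obstacle is the boundary case of lower semicontinuity, where $z$ may be a critical point of $f_{\lambda_0}^{p+1}$; this can occur precisely when some iterate of $C_{\lambda_0}$ lands on $\partial B_{\lambda_0}$, as happens for parameters on $\partial\mathcal{H}_k$ with $k\geq 2$. Hurwitz's theorem produces nearby zeros of $F_{\lambda_n}$ regardless of the multiplicity of $F_{\lambda_0}$ at $z$, so no transversality is needed and the argument requires only the regularity of $\partial B_\lambda$ already supplied by Proposition~\ref{hm}.
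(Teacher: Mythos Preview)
Your elaboration is in the same spirit as the paper's one-line proof (``immediate consequence of Proposition~\ref{hm}''), and the upper-semicontinuity step and the boundary case of lower semicontinuity via Hurwitz are fine. Two remarks, one stylistic and one substantive.

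First, the induction is cosmetic: you never use the hypothesis for $K_p$; both halves of the inductive step appeal only to the $p=0$ case. The argument goes directly from continuity of $\overline{B}_\lambda$ to continuity of $f_\lambda^{-p}(\overline{B}_\lambda)$ for every $p$.

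Second, your Case~1 justification does not prove the stated claim. From $f_{\lambda_0}^K(w)\in\{|\phi_{\lambda_0}|>2L\}$ and continuity you get $f_\lambda^{K+p+1}(z)\in B_\lambda^L\subset B_\lambda$, hence $z\in K_{K+p+1}(\lambda)$; this is strictly weaker than $z\in K_{p+1}(\lambda)$, since $f_\lambda^{-K}(B_\lambda)$ has components other than $B_\lambda$ (e.g.\ $T_\lambda$). The conclusion $f_\lambda^{p+1}(z)\in\overline{B}_\lambda$ does hold, but for the reason already present in your base case: the Slodkowski extension $H$ from Proposition~\ref{hm} satisfies $H_\lambda(\{|\zeta|>1\})=B_\lambda$, so $H_\lambda^{-1}(f_\lambda^{p+1}(z))$ is continuous in $\lambda$ and lies in the open set $\{|\zeta|>1\}$ at $\lambda_0$, hence nearby. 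Equivalently, just run your Case~2 argument uniformly with $w_\lambda:=H_\lambda(H_{\lambda_0}^{-1}(w))\in\overline{B}_\lambda$, regardless of whether $w$ lies in $B_{\lambda_0}$ or on $\partial B_{\lambda_0}$; no separate interior case is needed.
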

\begin{proof} It  is an  immediate consequence of Proposition \ref{hm}.
\end{proof}






\begin{lem}\label{5b} For any $t\in[0,1)$ and any $\lambda\in\mathcal{X}_{\mathcal{H}}(t)\setminus\partial\mathcal{H}_0$,  we have $v_\lambda^+\in \partial U_\lambda$
and the internal ray $R_{U_\lambda}(t)$ lands at $v_\lambda^+$.
\end{lem}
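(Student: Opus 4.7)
The plan is to reduce the claim to a boundary-uniform Carath\'eodory kernel convergence for a continuously-moving family of Jordan Riemann images. First I would check that the hypotheses in fact put $\lambda_0$ outside $\overline{\mathcal{H}_0}$: since $\mathcal{H}$ and $\mathcal{H}_0$ are disjoint open hyperbolic components and $\lambda_0\in\overline{\mathcal{H}}$, one has $\lambda_0\notin\mathcal{H}_0$, and this together with $\lambda_0\notin\partial\mathcal{H}_0$ forces $\lambda_0\notin\overline{\mathcal{H}_0}$. By Theorem~\ref{11a}, $\partial B_{\lambda_0}$ is then a Jordan curve and every Fatou component eventually mapped to $B_{\lambda_0}$ is a Jordan domain; in particular $T_{\lambda_0}$ is a Jordan domain, and hence so is $U_{\lambda_0}$, being the conformal image of $T_{\lambda_0}$ under $g_{\lambda_0}$. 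Moreover, Lemma~\ref{5a} combined with the continuity of $v_\lambda^+$ ensures that the closed component $\overline{U_\lambda}$ converges Hausdorff to $\overline{U_{\lambda_0}}$ as $\lambda\to\lambda_0$.

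Next I would introduce, for $\lambda$ in a small neighborhood of $\lambda_0$, the conformal isomorphism
\[
\Psi_\lambda \;:=\; \psi_\lambda \circ f_\lambda^{k-2}\big|_{U_\lambda}\colon U_\lambda \longrightarrow \mathbb{D},
\]
which sends $g_\lambda(0)$ to $0$ and satisfies $\Psi_\lambda(v_\lambda^+) = \Phi_{\mathcal{H}}(\lambda)$ whenever $\lambda\in\mathcal{H}$. For such $\lambda\in\mathcal{H}$, $J(f_\lambda)$ is a Sierpi\'nski curve by Theorem~\ref{1c}, so every Fatou component (and in particular $U_\lambda$) is a Jordan domain; the same holds at $\lambda_0$ by the previous paragraph. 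Thus each $\Psi_\lambda$ extends to a homeomorphism $\overline{U_\lambda}\to\overline{\mathbb{D}}$, and because the base points $g_\lambda(0)$ and the closures $\overline{U_\lambda}$ vary continuously, the boundary-uniform version of the Carath\'eodory kernel theorem for Jordan domains with Hausdorff-converging boundaries yields $\Psi_\lambda^{-1}\to\Psi_{\lambda_0}^{-1}$ \emph{uniformly on the closed disk $\overline{\mathbb{D}}$} as $\lambda\to\lambda_0$.

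To conclude, I would pick a sequence $\lambda_n\in\mathcal{H}$ with $\lambda_n\to\lambda_0$ and $\Phi_{\mathcal{H}}(\lambda_n)=r_n e^{2\pi i t_n}$, $r_n\to 1^-$, $t_n\to t$; such a sequence exists precisely because $\lambda_0\in\mathcal{X}_{\mathcal{H}}(t)$. Then
\[
v_{\lambda_n}^+ \;=\; \Psi_{\lambda_n}^{-1}\!\bigl(r_n e^{2\pi i t_n}\bigr) \;\longrightarrow\; \Psi_{\lambda_0}^{-1}\!\bigl(e^{2\pi i t}\bigr)
\]
by the uniform convergence on $\overline{\mathbb{D}}$, while the same sequence also converges to $v_{\lambda_0}^+$ by continuity of $\lambda\mapsto 2\sqrt{\lambda}$. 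Equating the two limits gives $v_{\lambda_0}^+ = \Psi_{\lambda_0}^{-1}(e^{2\pi i t}) \in\partial U_{\lambda_0}$, which is exactly the landing point of the internal ray $R_{U_{\lambda_0}}(t)$, proving both assertions simultaneously.

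The chief obstacle is upgrading the naive locally-uniform Carath\'eodory convergence of the Riemann maps on $\mathbb{D}$ (which is immediate from Lemma~\ref{5a}) to uniform convergence on the closed disk $\overline{\mathbb{D}}$: it is precisely here that one must invoke the uniform Jordan regularity of the domains $U_\lambda$ and the Hausdorff convergence of their boundaries, rather than mere kernel convergence of the open sets. Everything else is continuity bookkeeping.
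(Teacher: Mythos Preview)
Your proof is correct and reaches the same conclusion, but the route differs from the paper's. The paper argues ray-by-ray: by Lemma~\ref{5a} (ultimately the holomorphic motion of $\partial B_\lambda$ in Proposition~\ref{hm}), the \emph{closed} external ray $\overline{R_\lambda(t)}$ moves continuously in Hausdorff topology for $\lambda\in\overline{\mathcal H}\setminus\partial\mathcal H_0$, and pulling back along the appropriate inverse branch of $f_\lambda^{k-1}$ preserves this continuity, giving continuity of $\overline{R_{U_\lambda}(t)}$; since $v_\lambda^+$ sits on that closed ray for $\lambda$ on the parameter ray, the limit lands where claimed. Your approach instead packages everything into uniform convergence on $\overline{\mathbb D}$ of the single conformal map $\Psi_\lambda^{-1}$, and then reads off the conclusion from $v_{\lambda_n}^+=\Psi_{\lambda_n}^{-1}(\Phi_{\mathcal H}(\lambda_n))$.

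Two remarks on what each buys. First, your formulation handles the full impression $\mathcal X_{\mathcal H}(t)$ transparently, since you explicitly allow $t_n\to t$; the paper's one-line proof, as literally written, tracks only the fixed angle $t$ and thus strictly speaking covers only accumulation points of the single parameter ray $\mathcal R_{\mathcal H}(t)$ --- the needed joint continuity in $(\lambda,t)$ is true but left implicit there. Second, the ``chief obstacle'' you flag is real: upgrading Carath\'eodory kernel convergence to uniform convergence on $\overline{\mathbb D}$ requires Fr\'echet (not merely Hausdorff) convergence of the Jordan boundaries, by Rad\'o's theorem. In the present situation this is actually automatic, because Proposition~\ref{hm} gives a holomorphic motion of $\partial B_\lambda$ (hence, after pullback, of $\partial U_\lambda$), which supplies continuously varying parametrizations and therefore Fr\'echet convergence outright. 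You may want to invoke that directly rather than appeal to a general ``Hausdorff $\Rightarrow$ Fr\'echet for Jordan limits'' principle, which is less standard to cite.
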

\begin{proof}
  It follows from Lemma \ref{5a} 
   that the closure of the external ray  ${R_{\lambda}(t)}$ moves continuously (in Hausdorff topology) for
   $\lambda\in \overline{\mathcal{H}}\setminus\partial\mathcal{H}_0$. Note that pulling back $\overline{R_{\lambda}(t)}$ via $f_\lambda^p$ preserves the continuity.
\end{proof}

\begin{pro}\label{5c} For any $t\in[0,1)$, the set $\mathcal{X}_{\mathcal{H}}(t)\setminus\partial\mathcal{H}_0$ is either empty or a singleton.
\end{pro}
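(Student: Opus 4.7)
The plan is to transplant the rigidity argument from Proposition \ref{3ff} (used to prove Theorem \ref{3g}) to the Sierpi\'nski-hole setting. Suppose, aiming at a contradiction, that $\lambda_1, \lambda_2 \in \mathcal{X}_{\mathcal{H}}(t) \setminus \partial \mathcal{H}_0$ are distinct. After the symmetry reductions described before the proposition, I may assume $\mathcal{H} \subset \mathcal{F}$ and hence $\lambda_1, \lambda_2 \in \overline{\mathcal{F}}$; the case of an impression point on $\partial \mathcal{F}$ is then handled by the $e^{2\pi i/(n-1)}$-equivariance and complex-conjugation symmetry of the dynamics. By Theorem \ref{boundary}, neither $C_{\lambda_i}$ nor a parabolic cycle lies on $\partial B_{\lambda_i}$, so by Theorem \ref{11a} $\partial B_{\lambda_i}$ is a quasi-circle and every iterated preimage of $B_{\lambda_i}$ is a quasi-disk. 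By Lemma \ref{5b}, $v_{\lambda_i}^+ \in \partial U_{\lambda_i}$ and the internal ray $R_{U_{\lambda_i}}(t)$ lands at $v_{\lambda_i}^+$; in particular $f_{\lambda_i}^{k-1}(v_{\lambda_i}^+) \in \partial B_{\lambda_i}$, so Theorem \ref{lm} applies and $|J(f_{\lambda_i})| = 0$.

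Next I would construct a quasiconformal conjugacy following the scheme of Proposition \ref{3ff}. Choose a large $L$ and an initial qc map $\psi_0 \colon \widehat{\mathbb{C}} \to \widehat{\mathbb{C}}$ fixing $0$ and $\infty$, coinciding with $\phi_{\lambda_2}^{-1} \circ \phi_{\lambda_1}$ on $B_{\lambda_1}^L$, sending $U_{\lambda_1}$ to $U_{\lambda_2}$ in a way compatible with the uniformizations $g_{\lambda_i}$ (so that the marked boundary points lining the landing angle $t$ correspond), and sending $\Omega_{\lambda_1}^1$ to $\Omega_{\lambda_2}^1$. Such a $\psi_0$ exists because every boundary curve in sight is a quasi-circle (Theorem \ref{11a} together with Lemma \ref{3ef}). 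Inductively define $\psi_{d+1}$ piece by piece on the components of $\widehat{\mathbb{C}} \setminus f_{\lambda_1}^{-(d+1)}(\Omega_{\lambda_1}^1)$ via $\psi_{d+1} = (f_{\lambda_2})^{-1} \circ \psi_d \circ f_{\lambda_1}$ on each component, then glue across the common boundary pieces, exactly as in the proof of Proposition \ref{3ff}. The shared impression angle $t$ together with Lemma \ref{5b} guarantees that the critical values $v_{\lambda_1}^+$ and $v_{\lambda_2}^+$ lie in combinatorially matched pieces at each level, so the construction is consistent.

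The $\psi_d$ have uniformly bounded dilatation, so a subsequential limit $\psi_\infty$ exists, is holomorphic on $F(f_{\lambda_1}) = \bigcup_k f_{\lambda_1}^{-k}(B_{\lambda_1}^L)$, and satisfies $\psi_\infty \circ f_{\lambda_1} = f_{\lambda_2} \circ \psi_\infty$ on the Fatou set, hence everywhere by continuity. Since $|J(f_{\lambda_1})| = 0$, $\psi_\infty$ is a M\"obius map; its normalization at $0$ and $\infty$ forces $\psi_\infty(z) = az$ with $a^{n-1} = 1$ and $\lambda_2 = a^2 \lambda_1$. The condition $\lambda_1, \lambda_2 \in \mathcal{F}$ then yields $\lambda_1 = \lambda_2$, a contradiction.

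The main obstacle is ensuring that the piecewise-pullback construction remains consistent in a setting where the free critical orbit reaches $\partial B_\lambda$ only after transiting through the additional Fatou components $U_\lambda, f_\lambda(U_\lambda), \ldots, T_\lambda$. Concretely, I must verify that the single combinatorial datum $t \in \mathcal{X}_{\mathcal{H}}(t)$ pins down, through Lemma \ref{5b} and the grand orbit of $\Omega_\lambda^1$, the relative position of $v_\lambda^+$ in every puzzle piece of every level; and that $\psi_0$ can be arranged to respect both the B\"ottcher coordinate on $B_\lambda$ and the uniformizations of $T_\lambda$ and $U_\lambda$ simultaneously. Once these compatibilities are in place, the rigidity engine of Proposition \ref{3ff} takes over verbatim.
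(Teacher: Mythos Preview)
Your approach transplants the cut-ray pullback construction of Proposition~\ref{3ff} directly to the Sierpi\'nski-hole setting. The paper takes a genuinely different route: rather than building $\psi_0$ by hand from quasi-circles and then lifting along the combinatorial partition induced by $\Omega_\lambda^1$, it exploits the \emph{connectedness} of the impression. Concretely, the paper chooses a connected compact subset $\mathcal{E}\subset \mathcal{X}_{\mathcal{H}}(t)\setminus\partial\mathcal{H}_0$ containing $\lambda_1$ and $\lambda_2$, defines a holomorphic motion of $\overline{B_{\lambda_1}}$ together with the finitely many marked points $f_{\lambda_1}^j(v_{\lambda_1}^\pm)$, $0\le j\le k-2$, over a disk neighborhood $\mathcal{D}\supset\mathcal{E}$, and extends it to all of $\widehat{\mathbb{C}}$ by Slodkowski's theorem. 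The lift $H_1(\lambda,\cdot)$ of $H_0(\lambda,\cdot)$ through $f_\lambda$ is then automatically homotopic to $H_0(\lambda,\cdot)$ rel $P(f_{\lambda_1})\cup\overline{B_{\lambda_1}}$, because $H_1(\lambda,\cdot)^{-1}\circ H_0(\lambda,\cdot)$ varies continuously over the connected set $\mathcal{E}$ and is the identity at $\lambda=\lambda_1$. From there the iterated-lift argument and the zero-measure conclusion proceed as in your sketch.

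The gain of the paper's method is precisely that it sidesteps the obstacle you flag in your last paragraph. You never have to verify that the single datum $t$ determines the cut-ray itinerary of $v_\lambda^+$ at every depth, nor arrange $\psi_0$ to be simultaneously compatible with the B\"ottcher coordinate on $B_\lambda$ and the uniformizations of $T_\lambda$ and $U_\lambda$: the holomorphic motion transports the post-critical points by fiat, and the homotopy required for consistent pullback comes for free from the connectedness of $\mathcal{E}$. Your route, if completed, would need to establish that for two parameters in the same impression the critical values occupy combinatorially matched puzzle pieces at every level---plausible, since the $U_{\lambda_i}$ arise from the same branch of $f_\lambda^{-(k-2)}$ varying continuously over $\overline{\mathcal{H}}$ and the landing angle $t$ is shared---but this is exactly the step you leave open. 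The paper trades that combinatorial verification for an appeal to Slodkowski's extension theorem.
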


\begin{proof}  If not, there exist $t\in[0,1)$ and a connected and compact subset $\mathcal{E}$ of $\mathcal{X}_{\mathcal{H}}(t)\setminus\partial\mathcal{H}_0$  containing at least two points.
  By Lemma \ref{5b}, the internal ray $R_{U_\lambda}(t)$ lands at $v_\lambda^+$ for $\lambda\in \mathcal{E}$.
One may verify that for any $\lambda\in \mathcal{E}$, we have $f_\lambda^{k-2}(v_\lambda^+)\notin \partial B_\lambda$ and $f_\lambda^{k-1}(v_\lambda^+)\in \partial B_\lambda$. There is  disk neighborhood $\mathcal{D}\subset \mathbb{C}^*\setminus \partial \mathcal{H}_0$ of $\mathcal{E}$ such that for all $\lambda\in \mathcal{D}$, $f_\lambda^{k-2}(v_\lambda^+)\notin  \overline{B_\lambda}$.

Take two different parameters $\lambda_1,\lambda_2\in \mathcal{E}$ with $|\arg \lambda_1-\arg \lambda_2|<\frac{2\pi}{n-1}$ and
let $J=\{f_{\lambda_1}^j(v_{\lambda_1}^\varepsilon); 0\leq j \leq k-2, \varepsilon=\pm\}\cup \overline{B_{\lambda_1}}$.
We define a continuous map $h: \mathcal{D}\times J\rightarrow \mathbb{\widehat{C}}$ in the following way:

 1.  $h(\lambda_1, z)=z$ for all $z\in J$;

 2.  $h(\lambda, z)=\phi_\lambda^{-1}\circ \phi_{\lambda_1}(z)$ for all $(\lambda,z)\in \mathcal{D}\times \overline{B_{\lambda_1}}$;

 3.  For any $\lambda\in \mathcal{D}$,  we define $h(\lambda, f_{\lambda_1}^j(v_{\lambda_1}^\varepsilon))=f_{\lambda}^j(v_{\lambda}^\varepsilon)$ for $0\leq j\leq k-2$ and
  $\varepsilon\in \{\pm\}$.

   The map $h$ is a holomorphic motion parameterized by $\mathcal{D}$, with base point $\lambda_1$. By Slodkowski's theorem \cite{Sl}, there is a holomorphic motion $H:\mathcal{D}\times
\mathbb{\widehat{C}}\rightarrow\mathbb{\widehat{C}}$ extending $h$. We consider the restriction $H_0=H|_{\mathcal{E}\times
\mathbb{\widehat{C}}}$ of $H$.
  Note that for any  $\lambda\in \mathcal{E}$, the map $H_0(\lambda,\cdot)$ preserves the postcritical relation. 
  So  there is
  unique continuous map  $H_1: \mathcal{E}\times \mathbb{\widehat{C}}\rightarrow \mathbb{\widehat{C}}$ such that $H_1(\lambda_1,\cdot)\equiv id$ and
 the following diagram commutes:

  $$
\xymatrix{& \mathbb{\widehat{C}}\ar[r]^{f_{\lambda_1}} \ar[d]_{H_1(\lambda,\cdot)}
& \mathbb{\widehat{C}}\ar[d]^{H_0(\lambda,\cdot)}\\
&\mathbb{\widehat{C}}\ar[r]_{f_{\lambda}} & \mathbb{\widehat{C}}
 }$$

 Set $\psi_0=H_0(\lambda_2, \cdot)$ and $\psi_1=H_1(\lambda_2, \cdot)$.
  Both  $\psi_0$ and $\psi_1$ are quasiconformal maps satisfying  $f_{\lambda_2}\circ \psi_{1}=\psi_0\circ f_{\lambda_1}$.
   One may verify that $\psi_0$ and $\psi_1$ are homotopic rel $P(f_{\lambda_1})\cup \overline{B_{\lambda_1}}$. To see this, note that
   $H_1(\lambda, \cdot)^{-1}\circ H_0(\lambda, \cdot)$ is homotopic to the identity map rel $P(f_{\lambda_1})\cup \overline{B_{\lambda_1}}$
   for all $\lambda\in \mathcal{E}$.


  Then there is a sequence of quasi-conformal maps $\psi_j$ such that

 (a).  $f_{\lambda_2}\circ \psi_{j+1}=\psi_j\circ f_{\lambda_1}$
for all $j\geq0$,

(b). $\psi_{j+1}$ and $\psi_{j}$ are homotopic rel $f_{\lambda_1}^{-j}(P(f_{\lambda_1})\cup \overline{B_{\lambda_1}})$.


The maps $\psi_j$ form a normal family since their dilations are
uniformly bounded above.  Let  $\psi_\infty$ be the limit map of  $\psi_j$.
It is holomorphic in the Fatou set $F(f_{\lambda_1})=\cup_k f_{\lambda_1}^{-k}(B_{\lambda_1})$ and  satisfies
$f_{\lambda_2}\circ \psi_{\infty}=\psi_\infty\circ f_{\lambda_1}$ in $F(f_{\lambda_1})$. By continuity, $f_{\lambda_2}\circ \psi_{\infty}=\psi_\infty\circ f_{\lambda_1}$ in $\mathbb{\widehat{C}}$.

By Theorem \ref{lm},
 the  Lebesgue measures of $J(f_{\lambda_1})$ and $J(f_{\lambda_2})$ are zero.
Thus  $\psi_\infty$ is a
M\"obius map. It takes the form $\psi_\infty(z)=az$ where
$a^{n-1}=1$ and $\lambda_2=a^2\lambda_1$.
The condition $|\arg \lambda_1-\arg \lambda_2|<\frac{2\pi}{n-1}$ implies
$\lambda_1=\lambda_2$.
But this is a contradiction.
 \end{proof}

\begin{pro}\label{5d} The boundary $\partial\mathcal{H}$  is locally connected.
\end{pro}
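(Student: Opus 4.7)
The plan is to apply Carath\'eodory's theorem: since $\mathcal{H}$ is conformally a disk by Theorem \ref{1a}(3), $\partial\mathcal{H}$ is locally connected if and only if every impression $\mathcal{X}_{\mathcal{H}}(t)$ is a single point. Two ingredients are already in place: Proposition \ref{5c} provides $|\mathcal{X}_{\mathcal{H}}(t)\setminus\partial\mathcal{H}_0|\leq 1$, and Theorem \ref{3g} gives that $\partial\mathcal{H}_0$ is a Jordan curve. The task is to upgrade these to the statement that every $\mathcal{X}_{\mathcal{H}}(t)$ is a singleton.

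First, a purely topological observation. The impression $\mathcal{X}_{\mathcal{H}}(t)$ is a compact, connected continuum with at most one point off the closed set $\partial\mathcal{H}_0$. If that off-set point existed together with a point on $\partial\mathcal{H}_0$, the isolated point would form a proper nonempty clopen subset of $\mathcal{X}_{\mathcal{H}}(t)$, contradicting connectedness. Hence either $\mathcal{X}_{\mathcal{H}}(t)\cap\partial\mathcal{H}_0=\emptyset$, in which case Proposition \ref{5c} immediately yields a singleton, or $\mathcal{X}_{\mathcal{H}}(t)\subset\partial\mathcal{H}_0$.

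In the latter case, $\mathcal{X}_{\mathcal{H}}(t)$ is a connected subarc of the Jordan curve $\partial\mathcal{H}_0$, and I would show it is a point by combining the parameterization $\nu\colon\mathbb{S}\to\partial\mathcal{H}_0$ (Theorem \ref{3g}), the landing statement of Lemma \ref{5b}, and the rigidity provided by Proposition \ref{3ff}. Given $\mu\in\mathcal{X}_{\mathcal{H}}(t)\cap\partial\mathcal{H}_0$, approach $\mu$ along the parameter ray $\mathcal{R}_{\mathcal{H}}(t)$; for each $\lambda\in\mathcal{R}_{\mathcal{H}}(t)$ the internal ray $R_{U_\lambda}(t)$ lands at $v_\lambda^+$, so a Hausdorff limit of its closure singles out an accessible point of $\overline{T_\mu}\cap\partial B_\mu$ on which $f_\mu^{k-2}(v_\mu^+)$ must sit, and this point depends only on $t$. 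Pulling back via $f_\mu^{k-2}$ and applying the B\"ottcher coordinate on $B_\mu$ recovers the angle $\theta(\mu)$ of the external ray landing at $v_\mu^+$ as an explicit function of $t$; Proposition \ref{3ff} then forces $\mu$ to be uniquely determined by $t$.

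The main technical obstacle is passing to the Hausdorff limit at $\partial\mathcal{H}_0$: the continuity of $T_\lambda$ and $U_\lambda$ used in Lemma \ref{5b} is guaranteed by Lemma \ref{5a} only on $\mathbb{C}^*\setminus\overline{\mathcal{H}_0}$, because the holomorphic motion of $\partial B_\lambda$ from Proposition \ref{hm} breaks at $\partial\mathcal{H}_0$. One has to check by hand that $\overline{T_\lambda}$ and $\overline{U_\lambda}$ still converge in Hausdorff topology to $\overline{T_\mu}$ and $\overline{U_\mu}$ as $\lambda\to\mu$ inside $\mathcal{H}$. This should follow from the dynamical characterization of $\partial\mathcal{H}_0$ in Theorem \ref{boundary} (critical set on $\partial B_\mu$ or a parabolic cycle) together with the fact that in a small neighborhood of $\mu$ intersected with $\mathcal{H}$, the Fatou components $T_\lambda$ and $B_\lambda$ remain uniformly bounded away from infinity and from one another, so the continuous extension of the holomorphic motion of $\partial B_\lambda$ up to $\partial\mathcal{H}_0$ is enough to control the corresponding preimage components.
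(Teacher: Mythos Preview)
Your topological dichotomy in the first paragraph is correct and clean: either $\mathcal{X}_{\mathcal{H}}(t)$ misses $\partial\mathcal{H}_0$ (and is then a point by Proposition~\ref{5c}), or it is entirely contained in $\partial\mathcal{H}_0$.

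The gap is in the second case. You propose to take Hausdorff limits of $\overline{U_\lambda}$ and of the internal ray $\overline{R_{U_\lambda}(t)}$ as $\lambda\to\mu\in\partial\mathcal{H}_0$, and to read off $\theta(\mu)$ from that limit. But the continuity you need is precisely what is \emph{not} available at $\partial\mathcal{H}_0$: the holomorphic motion of $\partial B_\lambda$ in Proposition~\ref{hm} is parameterized by $\mathcal{V}=\mathbb{C}\setminus\overline{\mathcal{H}_0}$ and does not extend across $\partial\mathcal{H}_0$ (for instance, at a cusp $\mu$ the curve $\partial B_\mu$ fails to be a quasi-circle, so no quasiconformal extension of the motion can exist there). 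Consequently Lemma~\ref{5a} gives nothing at $\mu$, and the ``continuous extension'' you invoke in the last paragraph is not established. Even granting Hausdorff convergence of $\partial B_\lambda$, pulling it back through $f_\lambda^{-(k-1)}$ to obtain convergence of $\overline{U_\lambda}$ is delicate exactly because at $\mu$ the critical values sit on $\partial B_\mu$, so the relevant inverse branches degenerate. Finally, you never say how the internal angle $t$ in $U_\lambda$ is converted into an external angle in $B_\mu$; the sentence ``recovers the angle $\theta(\mu)$ \dots\ as an explicit function of $t$'' is the whole content of the argument and is left as a black box. (You also do not treat cusps, to which Proposition~\ref{3ff} does not apply.)

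The paper avoids all of this by replacing the unstable objects $\partial B_\lambda$, $T_\lambda$, $U_\lambda$ with cut rays, which by Lemma~\ref{3e}, Theorem~\ref{cutreal} and Remark~\ref{3gg} move continuously over all of $\mathcal{F}_0$, including $\partial\mathcal{H}_0$. For $\lambda\in\mathcal{X}_{\mathcal{H}}(t)\cap\partial\mathcal{H}_0$ not a cusp, Lemma~\ref{3d} gives an external angle $\alpha$ with $R_\lambda(\alpha)$ landing at $v_\lambda^+$. A cut-ray separation argument (as in the proof of Lemma~\ref{3d}) then shows that if $nt\not\equiv n^{k-1}\alpha\pmod 1$ one can separate $v_u^+$ from the relevant internal rays for all $u$ near $\lambda$ in $\mathcal{H}$, contradicting $\lambda\in\mathcal{X}_{\mathcal{H}}(t)$. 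Hence $\alpha$ lies in the finite set of solutions of $nt\equiv n^{k-1}\alpha$, so $\mathcal{X}_{\mathcal{H}}(t)$ sits in a finite set together with countably many cusps, and connectivity forces it to be a point. The moral: use cut rays, not Hausdorff limits of Fatou components, to transport information across $\partial\mathcal{H}_0$.
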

\begin{proof} It follows from Lemma \ref{5c} that for any $t$, the impression $\mathcal{X}_{\mathcal{H}}(t)$ is either a singleton or
 contained in $\partial\mathcal{H}_0$. In the latter case,   for any  $\lambda\in \mathcal{X}_{\mathcal{H}}(t)\cap\mathcal{F}$ which is not  a cusp,
 it follows from Lemma \ref{3d} that there is an external ray $R_{\lambda}(\alpha)$ landing at
 $v_{\lambda}^+$.

 We claim that  $nt=n^{k-1}\alpha {\   \rm mod}\  1$. If not, then by Lemma \ref{3kk}, there is a cut ray $\Omega_\lambda^\beta$ separating $R_\lambda(nt)$ and
 $R_\lambda(n^{k-1}\alpha)$.  By stability of cut rays, there exist a neighborhood $\mathcal{U}$ of $\lambda$ and $\varepsilon>0$ such that
 $Z_{nt,\varepsilon}^u$ and $Z_{n^{k-1}\alpha,\varepsilon}^u$  are contained in different components of $\mathbb{\overline{C}}-\Omega_u^\beta$
 for all $u\in \mathcal{U}\cap (\mathbb{\overline{C}}-\mathcal{H}_0)$, where
 $$Z_{t,\varepsilon}^u:=\phi_u^{-1}(\{re^{2\pi i \theta}; r>1, |\theta-t|<\varepsilon\}).$$
  Moreover, by shrinking $\mathcal{U}$ a little bit, we see that $f_{u}^{k-1}(v_u^+)\in Z_{n^{k-1}\alpha,\varepsilon}^u$ for all
 $u\in\mathcal{H}\cap\mathcal{U}$. Then there is a cut ray $\Omega_u^\eta\subset f_u^{1-k}(\Omega_u^\beta)$, separating $v_u^+$ and
 $\cup_{|\theta-t|<\varepsilon/n}R_{U_u}(\theta)$ for all $u\in \mathcal{H}\cap\mathcal{U}$.
 However, by the definition of $\mathcal{X}_{\mathcal{H}}(t)$, when $k$ is large so that $1/k<\varepsilon/n$, there is $\lambda_k
 \in \mathcal{U}\cap \Phi_{\mathcal{H}}^{-1}(\{re^{2\pi i \theta}; 1-1/k<r<1,
|\theta-t|<{1}/{k}\})$. So we have $v_{\lambda_k}^+\subset \cup_{|\theta-t|<1/k}R_{U_{\lambda_k}}(\theta)\subset\cup_{|\theta-t|<\varepsilon/n}R_{U_{\lambda_k}}(\theta)$.
But this is a contradiction. This completes the proof of the claim.

Thus each $\lambda\in \mathcal{X}_{\mathcal{H}}(t)$ is  either a cusp or contained in $\{\nu(\alpha);
nt=n^{k-1}\alpha\}$(a finite set). The connectivity of $\mathcal{X}_{\mathcal{H}}(t)$ implies that it is a singleton.
\end{proof}

\begin{thm}\label{5d} The boundary $\partial\mathcal{H}$  is a Jordan curve.
\end{thm}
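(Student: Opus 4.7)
The plan is to parallel the proof of Theorem \ref{3g}: first use local connectivity (the preceding proposition) together with Carath\'eodory's theorem to extend the conformal isomorphism $\Phi_{\mathcal{H}}^{-1}:\mathbb{D}\to\mathcal{H}$ continuously to $\overline{\mathbb{D}}\to\overline{\mathcal{H}}$, producing a continuous surjection $\nu_{\mathcal{H}}:\mathbb{S}\to\partial\mathcal{H}$, $\nu_{\mathcal{H}}(t)=\mathcal{X}_{\mathcal{H}}(t)$. The remaining task is to show $\nu_{\mathcal{H}}$ is injective. I will argue by contradiction: assume $\nu_{\mathcal{H}}(t_1)=\nu_{\mathcal{H}}(t_2)=\lambda$ with $t_1\neq t_2\in[0,1)$.

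The key idea is to translate this identification back to the dynamical plane. For $\mu\in\mathcal{R}_{\mathcal{H}}(t_i)$ tending to $\lambda$, the defining formula $\Phi_{\mathcal{H}}(\mu)=\psi_\mu(f_\mu^{k-2}(v_\mu^+))$ forces $\psi_\mu(f_\mu^{k-2}(v_\mu^+))\to e^{2\pi i t_i}$. Since $\lambda\in\overline{\mathcal{H}}\subset\mathbb{C}^*\setminus\mathcal{H}_0$, the iterates $f_\mu^j(v_\mu^+)$ avoid the pole at $0$ for $0\le j\le k-2$, so continuity gives $f_\mu^{k-2}(v_\mu^+)\to q:=f_\lambda^{k-2}(v_\lambda^+)$. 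Combining this with continuous variation of the Riemann map $\psi_\mu$ together with its boundary extension (discussed below), one obtains $\psi_\lambda(q)=e^{2\pi i t_i}$ for both $i=1,2$. Since $\lambda\notin\mathcal{H}_0$, Theorem \ref{11a} ensures $\partial T_\lambda$ is a Jordan curve and $\psi_\lambda$ extends to a homeomorphism $\overline{T_\lambda}\to\overline{\mathbb{D}}$; injectivity on $\partial T_\lambda$ forces $e^{2\pi i t_1}=e^{2\pi i t_2}$ and hence $t_1=t_2$, a contradiction. This argument works uniformly in the two subcases $\lambda\notin\partial\mathcal{H}_0$ (where it is essentially the translation in Lemma \ref{5b}, applied to $U_\lambda$ via the conformal map $f_\lambda^{k-2}:U_\lambda\to T_\lambda$) and $\lambda\in\partial\mathcal{H}_0$.

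The main obstacle is justifying the continuity of $\psi_\mu$ and of its boundary extension as $\mu$ approaches $\lambda$, particularly when $\lambda\in\partial\mathcal{H}\cap\partial\mathcal{H}_0$. For parameters in $\mathbb{C}^*\setminus\overline{\mathcal{H}_0}$, Proposition \ref{hm} supplies a holomorphic motion of $\partial B_\mu$, which lifts under $f_\mu^{-1}$ to a holomorphic motion of $\partial T_\mu$; combined with the Jordan structure of $\partial T_\lambda$ (Theorem \ref{11a}) and Carath\'eodory's kernel convergence theorem, this delivers uniform convergence of $\psi_\mu^{-1}:\overline{\mathbb{D}}\to\overline{T_\mu}$, hence the boundary continuity required above. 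Pushing the convergence to the limit $\mu\to\lambda\in\partial\mathcal{H}_0$ is then carried out by approaching $\lambda$ from inside $\mathcal{H}\subset\mathbb{C}^*\setminus\overline{\mathcal{H}_0}$ along the rays $\mathcal{R}_{\mathcal{H}}(t_i)$, exploiting the Hausdorff continuity of $f_\mu^{-1}(\overline{B_\mu})$ furnished by Lemma \ref{5a}.
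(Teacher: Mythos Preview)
Your argument is clean when $\lambda\notin\partial\mathcal{H}_0$: there it reduces to Lemma~\ref{5b}, and injectivity of the boundary extension of $\psi_\lambda$ on the Jordan curve $\partial T_\lambda$ finishes the job. The gap is in the case $\lambda\in\partial\mathcal{H}\cap\partial\mathcal{H}_0$. The two tools you invoke---Proposition~\ref{hm} and Lemma~\ref{5a}---are both stated and proved only for parameters in the \emph{open} set $\mathbb{C}^*\setminus\overline{\mathcal{H}_0}$; neither furnishes Hausdorff continuity of $\partial T_\mu$ (or of $f_\mu^{-1}(\overline{B_\mu})$) as $\mu\to\lambda\in\partial\mathcal{H}_0$. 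Approaching $\lambda$ along the rays $\mathcal{R}_{\mathcal{H}}(t_i)$ keeps the intermediate parameters inside $\mathbb{C}^*\setminus\overline{\mathcal{H}_0}$, but says nothing about the limit. What you need is uniform convergence of $\psi_\mu^{-1}$ on all of $\overline{\mathbb{D}}$ as $\mu\to\lambda$, and Carath\'eodory kernel convergence only gives you locally uniform convergence on $\mathbb{D}$; pushing it to the boundary requires equicontinuity of the boundary extensions, which is exactly what is in doubt when $\partial B_\lambda$ fails to be a quasi-circle (as it does on $\partial\mathcal{H}_0$, by Theorem~\ref{11a} combined with Lemma~\ref{3b}). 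Note also that when $C_\lambda\subset\partial B_\lambda$, the point $q=f_\lambda^{k-2}(v_\lambda^+)$ lies on $\partial B_\lambda$, and it is not even clear a priori that $q\in\partial T_\lambda$, so the expression $\psi_\lambda(q)$ needs justification before you can invoke injectivity.

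The paper sidesteps this boundary-continuity issue entirely by arguing with cut rays instead of Riemann maps. One chooses a cut ray $\Omega_\lambda^\alpha$ separating the two internal rays $R_{U_\lambda}(t_1)$ and $R_{U_\lambda}(t_2)$ (Lemma~\ref{3kk}); since cut rays move continuously in Hausdorff topology across all of $\mathcal{F}_0$ (Lemma~\ref{3e}, Theorem~\ref{cutreal}, Remark~\ref{3gg}), the separation persists for nearby $u\in\mathcal{H}$, forcing $v_u^+$ to stay on the side of $R_{U_u}(t_1)$ and away from $R_{U_u}(t_2)$. This contradicts $\lambda\in\mathcal{X}_{\mathcal{H}}(t_2)$. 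The point is that cut rays are robust objects defined uniformly over $\mathcal{F}_0$, including $\partial\mathcal{H}_0$, whereas the Riemann-map boundary extensions you rely on are not known to vary continuously there.
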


\begin{proof} If not, then there exist a parameter $\lambda\in\partial\mathcal{H}$ with $0\leq \arg\lambda\leq \frac{\pi}{n-1}$ (by assumption of $\mathcal{H}$) and two different angles $t_1,t_2$ such that  $\mathcal{X}_{\mathcal{H}}(t_1)=\mathcal{X}_{\mathcal{H}}(t_2)=\{\lambda\}$.


By Lemma \ref{3kk}, there is a cut ray $\Omega_\lambda^\alpha$ separating the internal rays $R_{U_\lambda}(t_1)$ and $R_{U_\lambda}(t_2)$. Suppose that $v_\lambda^+$ and $R_{U_\lambda}(t_1)$ are contained in the same component
  of $\mathbb{\widehat{C}}-\Omega_\lambda^\alpha$. By the  stability of cut rays, there is a neighborhood $\mathcal{U}$ of $\lambda$ such that
for any $u\in \mathcal{U}\cap \mathcal{H}$,   the set $\{v_u^+\}\cup R_{U_u}(t_1)$ and the internal ray $R_{U_u}(t_2)$ are contained in different components
  of $\mathbb{\widehat{C}}-\Omega_u^\alpha$. But contradicts the assumption that $\mathcal{X}_{\mathcal{H}}(t_2)=\{\lambda\}$.
\end{proof}






\section{Hyperbolic components of renormalizable type}\label{babe}

In this section, we study the  hyperbolic components  of renormalizable type.

We begin with a definition. We say a McMullen map $f_\lambda$ is {\it
renormalizable} (resp. {\it
$*$-renormalizable}) at $c\in C_\lambda$ if there exist an
integer $p\geq 1$  and two disks $U$ and $V$ containing $c$, such that
$f_\lambda^p: U\rightarrow V$ (resp. $-f_\lambda^p: U\rightarrow V$) is a quadratic-like map whose Julia set
is connected.  The triple $(f_\lambda^p, U, V)$ (resp. $(-f_\lambda^p, U, V)$) is called the {\it renormalization}
 (resp. $*$-{\it renormalization}) of $f_\lambda$.

Let $\mathcal{B}$ be  a hyperbolic component  of renormalizable type. For any $\lambda\in \mathcal{B}$, the map $f_\lambda$ has an attracting cycle in $\mathbb{C}$, say $z_\lambda\mapsto f_\lambda(z_\lambda)\mapsto\cdots\mapsto f^p_\lambda(z_\lambda)=z_\lambda$, where $p$ is the period.
We may assume that the attracting cycle is suitably chosen and labeled so that $z_\lambda$ is holomorphic with respect to $\lambda\in\mathcal{B}$.

\begin{lem}[\cite{WQY}, Prop 5.4]\label{6att} If $\lambda\in \mathcal{B}$,
  then $f_\lambda$ is either renormalizable or $*$-renormalizable. Moreover,

1. If $f_\lambda$ is renormalizable and $n$ is odd, then $f_\lambda$
has exactly two attracting cycles  in $\mathbb{C}$.

2. If $f_\lambda$ is $*$-renormalizable and $n$ is odd, then $p$ is even, $f^{p/2}_\lambda(z_\lambda)=-z_\lambda$ and
$f_\lambda$ has exactly one attracting cycle in $\mathbb{C}$.

3. If $n$ is even, then
$f_\lambda$ has exactly one attracting cycle in $\mathbb{C}$ and there is a unique $c\in C_\lambda$, such that
 $f_\lambda$ is renormalizable at $c$.
 \end{lem}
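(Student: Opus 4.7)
The plan is to combine the standard Douady--Hubbard thickening (which produces quadratic-like restrictions on the immediate basin of the attracting cycle) with the $\mathbb{Z}/2$-symmetry $\sigma(z)=-z$, whose commutation relation with $f_\lambda$ depends on the parity of $n$.

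Fix $\lambda\in\mathcal{B}$ and let $\mathcal{O}=\{z_\lambda,\dots,f_\lambda^{p-1}(z_\lambda)\}$ be the attracting cycle of period $p$. At least one critical point $c\in C_\lambda$ lies in the immediate basin, inside a periodic Fatou component $D_c$. Thickening $D_c$ by pulling back a linearizing neighborhood of a repelling periodic point on $\partial D_c$ under $f_\lambda^p$ yields disks $U\Subset V$ with $c\in U\subset D_c$ and $f_\lambda^p:U\to V$ proper; the filled Julia set is connected because $c\in U$. By Riemann--Hurwitz for simply connected domains, the degree of this restriction equals $1$ plus the number of critical points of $f_\lambda^p$ in $D_c$, which reduces the task to counting how the $2n$ critical points $c_k=c_0 e^{k\pi i/n}$ distribute among the cycle of Fatou components of $\mathcal{O}$.

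For $n$ odd, $f_\lambda\circ\sigma=\sigma\circ f_\lambda$, so $\sigma\mathcal{O}$ is again an attracting cycle. If $\sigma\mathcal{O}\neq\mathcal{O}$, the two disjoint cycles $\pm\mathcal{O}$ separately attract $\pm c$, so $D_c$ meets $C_\lambda$ only at $c$; the degree is $2$ at both $c$ and $-c$, yielding two quadratic-like renormalizations and two distinct attracting cycles, which is Case~1. If instead $\sigma\mathcal{O}=\mathcal{O}$, then $\sigma$ acts on the $p$ points of $\mathcal{O}$ with no fixed point (since $0\notin\mathcal{O}$), forcing $p$ to be even and $f_\lambda^{p/2}(z_\lambda)=-z_\lambda$. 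Because $f_\lambda^{p/2}$ is odd, $g:=-f_\lambda^{p/2}$ fixes $z_\lambda$ with multiplier satisfying $g'(z_\lambda)^2=(f_\lambda^p)'(z_\lambda)$, hence attracting; the thickening construction applied to $g$ at $c$ produces a $*$-renormalization $(-f_\lambda^{p/2},U,V)$ of period $p/2$, giving Case~2, with only one attracting cycle by construction. For $n$ even, $f_\lambda\circ\sigma=f_\lambda$, so $\sigma$ is a deck involution; the relation $f_\lambda(-z_\lambda)=f_\lambda(z_\lambda)\in\mathcal{O}$ forces $-z_\lambda$ to be strictly preperiodic to $\mathcal{O}$ (else $-z_\lambda=z_\lambda$ and $z_\lambda=0$, forbidden), so $\mathcal{O}$ is the unique attracting cycle. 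The pair $\{c,-c\}$ shares its $f_\lambda$-image, hence its forward orbit, so exactly one of the two lies in the periodic Fatou component $D_c$; this gives degree $2$ for $f_\lambda^p:U\to V$ and the uniqueness asserted in Case~3.

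The main obstacle is the degree-$2$ count: one must exclude the possibility that $D_c$ secretly contains additional critical points beyond those dictated by the symmetry analysis. This is handled by observing that the periodic Fatou components of $\mathcal{O}$ are pairwise disjoint topological disks and that the $2n$ critical points are rigidly arranged on the circle $|z|=|\lambda|^{1/(2n)}$ with the $\sigma$-action permuting them by half a turn, so the three cases above exhaust every possible interaction between the $\sigma$-orbits on $C_\lambda$ and the cycle of Fatou components of $\mathcal{O}$; in each case the degree is exactly $2$, delivering the required quadratic-like (respectively $*$-quadratic-like) model.
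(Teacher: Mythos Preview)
The paper does not actually prove this lemma; it is quoted from \cite{WQY} (Proposition~5.4 there) without argument, so there is no in-paper proof to compare against. Your overall architecture---thicken the immediate basin \`a la Douady--Hubbard and analyse the involution $\sigma(z)=-z$ according to the parity of $n$---is the right one, and the trichotomy (odd $n$ with $\sigma\mathcal{O}\neq\mathcal{O}$; odd $n$ with $\sigma\mathcal{O}=\mathcal{O}$; even $n$) matches the three cases of the statement.

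The genuine gap is your last paragraph. You correctly flag the degree-$2$ count as ``the main obstacle'', but the justification you offer does not close it. The $\sigma$-action on $C_\lambda$ has $n$ orbits $\{c_k,c_{k+n}\}$, not one; nothing in your argument prevents a second $\sigma$-orbit from meeting the cycle of immediate basins, which would raise the degree above $2$. What is missing is the full rotational symmetry $\rho(z)=e^{\pi i/n}z$, which satisfies $f_\lambda\circ\rho=\sigma\circ f_\lambda$ and hence permutes Fatou components. If two critical points $c_j,c_k$ lay in the same periodic component $D_0$, then $\rho^{k-j}D_0=D_0$, giving a nontrivial finite rotation acting freely on the simply connected disk $D_0$ (since $0,\infty\notin D_0$)---impossible. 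Combining this with the fact that all even-indexed (resp.\ odd-indexed) critical points share the value $v_\lambda^{+}$ (resp.\ $v_\lambda^{-}$), one then checks that the entire cycle $D_0\cup\cdots\cup D_{p-1}$ meets $C_\lambda$ in exactly one point in Cases~1 and~3, and in exactly the pair $\{c,-c\}$ located at $D_0$ and $D_{p/2}=-D_0$ in Case~2. With this in hand your thickening really is quadratic-like; without it the proof is incomplete.
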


The terminology  `hyperbolic component  of renormalizable type' comes from Lemma \ref{6att}

Let $\rho(\lambda)=(f_\lambda^p)'(z_\lambda)$ be the multiplier of the attracting cycle of $f_\lambda$ for $\lambda\in\mathcal{B}$.
Base on Lemma \ref{6att}, we set $(\epsilon,k)=(-1,p/2)$ if $n$ is odd and $f_\lambda$
is $*$-renormalizable, and  $(\epsilon,k)=(1,p)$ in the other cases.
We define a map $\kappa:\mathcal{B}\rightarrow \mathbb{D}$ by $\kappa(\lambda)=(\epsilon f_\lambda^{k})'(z_\lambda)$. Note that either $\rho=\kappa^2$ or
$\rho=\kappa$.

The main result of this section is:

\begin{thm}\label{6a} The  map $\kappa:\mathcal{B}\rightarrow \mathbb{D}$  is a conformal map.
 It can be extended continuously to a homeomorphism from $\overline{\mathcal{B}}$ to $\overline{\mathbb{D}}$.
\end{thm}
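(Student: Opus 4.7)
The plan is to show (i) $\kappa$ is a conformal isomorphism of $\mathcal{B}$ onto $\mathbb{D}$, and (ii) $\kappa$ extends continuously to a homeomorphism $\overline{\mathcal{B}} \to \overline{\mathbb{D}}$.

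For (i), I would first observe that $\kappa$ is holomorphic on $\mathcal{B}$: the equation $f_\lambda^p(z) - z = 0$ has the non-degenerate solution $z_\lambda$ throughout $\mathcal{B}$ (the multiplier avoids $1$), so by the implicit function theorem both $z_\lambda$ and $\kappa(\lambda) = (\epsilon f_\lambda^{k})'(z_\lambda)$ depend holomorphically on $\lambda$, and plainly $\kappa(\mathcal{B}) \subset \mathbb{D}$. For properness, if a sequence $\lambda_n \in \mathcal{B}$ converged to $\lambda_\infty \in \partial \mathcal{B}$ with $|\kappa(\lambda_n)|$ bounded away from $1$, the implicit function theorem would extend the attracting cycle across $\lambda_\infty$, forcing $\lambda_\infty \in \mathcal{B}$, a contradiction. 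To determine the degree, I would locate the ``centers'' $\kappa^{-1}(0)$: these are parameters at which some critical point $c \in C_\lambda$ is periodic under $\epsilon f_\lambda^{k}$, so that $f_\lambda$ is postcritically finite. Uniqueness in $\mathcal{B}$ follows from Thurston's rigidity theorem, applied exactly as in the postcritically finite case of the proof of Proposition \ref{3ff}, while the local degree at the center is $1$ because the multiplier is a standard local coordinate on the parameter space of a non-degenerate quadratic-like family. Hence $\kappa$ is proper of degree $1$, i.e., a conformal isomorphism.

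For (ii), the strategy is to straighten. By Lemma \ref{6att}, on a complex neighborhood $\mathcal{N}$ of $\overline{\mathcal{B}}$ the iterate $\epsilon f_\lambda^{k}$ restricts to a quadratic-like map $g_\lambda : U_\lambda \to V_\lambda$ with one critical point; using the stability of the polynomial-like structure together with the holomorphic motion of $\partial B_\lambda$ from Proposition \ref{hm}, the domains $U_\lambda, V_\lambda$ can be chosen to vary continuously in $\lambda \in \overline{\mathcal{B}}$. The Douady--Hubbard straightening theorem then produces a continuous map $\chi : \overline{\mathcal{B}} \to M$ to the Mandelbrot set which restricts to a homeomorphism of $\mathcal{B}$ onto the main cardioid $\mathcal{C}$ of a baby copy of $M$, and one has $\kappa = \mu \circ \chi$, where $\mu : \overline{\mathcal{C}} \to \overline{\mathbb{D}}$ is the explicit multiplier homeomorphism of the main cardioid. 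Continuity and injectivity of $\chi$ up to the boundary will then transfer the known homeomorphism of closures for the quadratic family to $\kappa$.

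The main obstacle is establishing the continuity and injectivity of the straightening $\chi$ up to $\partial \mathcal{B}$: one must rule out collapse or discontinuity of the polynomial-like structure at parabolic and Misiurewicz boundary parameters, and control the hybrid conjugacies uniformly. The key tools are the persistence of the quadratic-like restriction on a genuine complex neighborhood of $\overline{\mathcal{B}}$ (guaranteed by Proposition \ref{hm} and the argument of Lemma \ref{3a}), together with the fact that for $\lambda \in \overline{\mathcal{B}}$ the free critical orbit is confined to the filled Julia set of $g_\lambda$, so that $\chi(\overline{\mathcal{B}}) \subset M$; injectivity on the boundary then reduces to Thurston rigidity at postcritically finite boundary parameters and to the rigidity argument of Proposition \ref{3ff} (with the zero-measure input of Theorem \ref{lm}) at the remaining ones.
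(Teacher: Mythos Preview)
Your approach diverges from the paper's in both halves, and in each case the paper's route is shorter and avoids the difficulties you flag.

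For (i), the paper does not count centers. Instead it builds a local inverse of $\kappa$ by quasiconformal surgery: model the dynamics on the immediate basin by a Blaschke product $B_{\kappa_0}$, deform the multiplier to a nearby value $\zeta$ by modifying the model near $0$, transport the resulting Beltrami coefficient to $\mathbb{\widehat{C}}$ symmetrically, and integrate; the straightened map is again a McMullen map $f_{b_0(\zeta)}$ with $\kappa(b_0(\zeta))=\zeta$. This shows $\kappa$ is a covering, hence conformal since $\mathbb{D}$ is simply connected. Your alternative has two gaps. First, the appeal to Proposition~\ref{3ff} is misplaced: that argument hinges on $v_\lambda^+\in\partial B_\lambda$ and the external angle $\theta(\lambda)$ as the combinatorial marking used to build the Thurston equivalence; at a center of $\mathcal{B}$ the critical value lies in a bounded Fatou component, so you would need a different construction of the combinatorial equivalence (it can be done, e.g.\ via the quasiconformal conjugacy between hyperbolic maps in the same component together with zero measure of the Julia set, but not ``exactly as in'' Proposition~\ref{3ff}). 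Second, the assertion that the local degree at the center is $1$ is precisely the transversality statement whose proof \emph{is} the surgery construction; calling it ``standard'' without verifying that the quadratic-like family is a non-degenerate unfolding begs the question.

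For (ii), the paper again proceeds directly: the multiplier of the neutral cycle gives a continuous extension of $\kappa$ to $\overline{\mathcal{B}}$; the implicit function theorem makes $\partial\mathcal{B}$ a real-analytic arc away from the single parabolic point $\kappa^{-1}(1)$, hence locally connected; and since the extended $\kappa$ is single-valued on $\partial\mathcal{B}$, Carath\'eodory's theorem upgrades $\kappa^{-1}$ to a homeomorphism of closures. Your straightening route imports a genuine difficulty that the paper never encounters: continuity of the Douady--Hubbard straightening $\chi$ at parabolic boundary parameters is delicate and is exactly the ``main obstacle'' you identify but do not resolve. Moreover, invoking Proposition~\ref{hm} and Lemma~\ref{3a} to produce a quadratic-like family over a full complex neighborhood of $\overline{\mathcal{B}}$ is not justified by those statements; Lemma~\ref{3a} requires $\partial B_\lambda$ to contain neither a critical point nor a parabolic cycle, which fails on parts of $\partial\mathcal{H}_0$, and there is no reason a priori that $\overline{\mathcal{B}}$ stays away from $\partial\mathcal{H}_0$. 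The paper's argument needs none of this machinery.
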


\begin{proof} Note that $\kappa(\lambda)$ is the multiplier of the map $g_\lambda=\epsilon f_\lambda^{k}$ at its fixed point $z_\lambda$.
  By implicit function theorem, if  $\mathcal{B}\ni\lambda_n\rightarrow \partial\mathcal{B}$, then
$|\kappa(\lambda_n)|\rightarrow1$, so  the  map  $\kappa:\mathcal{B}\rightarrow \mathbb{D}$  is proper.

In the following,
we will show that $\kappa$ is actually a covering map. To this end, we will construct a local inverse map of $\kappa$ by means of quasiconformal
surgery. The idea  is similar to the quadratic case \cite{CG}.

Fix $\lambda_0\in \mathcal{B}$ and set $\kappa_0=\kappa(\lambda_0)$. We may relabel $z_{\lambda_0}$ so that the immediate attracting basin
$A_0$ of $z_{\lambda_0}$ contains a critical point $c\in C_{\lambda_0}$. Note that $\epsilon f_\lambda^{k}(A_0)=A_0$ and there is a conformal map
$\phi:A_0\rightarrow \mathbb{D}$ such that $\phi(z_{\lambda_0})=0$ and the following diagram commutes:
   $$
\xymatrix{& {A_0}\ar[r]^{\epsilon f_{\lambda_1}^k} \ar[d]_{\phi}
& {A_0}\ar[d]^{\phi}\\
&\mathbb{D}\ar[r]_{B_{\kappa_0}} & \mathbb{D}
 }$$
where $B_\zeta$ is the Blaschke product defined by $B_\zeta(z)=z\frac{z+\zeta}{1+\overline{\zeta}z}$.
Obviously $z=0$ is an attracting fixed point of $B_{\kappa_0}$ with multiplier $B'_\zeta(0)=\zeta$. Then there is a neighborhood $\mathcal{U}$ of $\kappa_0$ and
 a continuous family of quasiregular maps $\widetilde{B}:\mathcal{U}\times \mathbb{D}\rightarrow\mathbb{D}$ such that   $\widetilde{B}(\kappa_0,\cdot)=B_{\kappa_0}(\cdot)$ and
  $\widetilde{B}(\zeta,z)=B_{\kappa_0}(z)$ for  $\varepsilon<|z|<1$ ($\varepsilon>0$ is a small number),
 $\widetilde{B}(\zeta,z)=B_{\zeta}(z)$ for  $|z|<\varepsilon/2$ and $\widetilde{B}(\zeta,\cdot)$ is quasi-regular elsewhere.

Then we get a continuous family $\{G_\zeta\}_{\zeta\in\mathcal{U}}$ of quasiregular maps:
\begin{equation*}
G_\zeta(z)=\begin{cases}
 (-1)^q (f^{k-1}_{\lambda_0}|_{f_{\lambda_0}(A_0)})^{-1} (\epsilon \phi^{-1} \widetilde{B}(\zeta, \phi(e^{-q\pi i/n}z))),\ &z\in e^{q\pi i/n} A_0, \ 0\leq q< 2n,\\
f_{\lambda_0}(z),\ &z\in\mathbb{\widehat{C}}\setminus\cup_{0\leq q< 2n}e^{q\pi i/n} A_0.
\end{cases}
\end{equation*}

We can construct a $G_\zeta$-invariant complex structure $\sigma_\zeta$  such  that

$\bullet$ $\sigma_{\kappa_0}$ is  the standard complex structure $\sigma$ on $\mathbb{\widehat{C}}$.

$\bullet$ $\sigma_\zeta$ is continuous with respect to $\zeta\in\mathcal{U}$.

$\bullet$ $\sigma_{\zeta}$ is invariant under the maps $z\mapsto e^{2\pi i/n}z$ and  $z\mapsto -z$.

$\bullet$ $\sigma_{\zeta}$ is the standard complex structure near the attracting cycle and outside $\cup_{k\geq0}f_{\lambda_0}^{-k}(\cup_{0\leq q< 2n}e^{q\pi i/n} A_0)$.

The Beltrami coefficient $\mu_\zeta$ of $\sigma_\zeta$ satisfies $\|\mu_\zeta\|<1$. By Measurable Riemann Mapping Theorem, there is a continuous family of
quasiconformal maps $\psi_\zeta$ fixing $0, \infty$ and normalized so that $\psi_\zeta'(\infty)=1$. The map $\psi_\zeta$ satisfies
$\psi_\zeta(e^{2\pi i/n}z)=e^{2\pi i/n}\psi_\zeta(z)$ and $\psi_\zeta(-z)=-\psi_\zeta(z)$.
Then $F_\zeta=\psi_\zeta\circ G_\zeta\circ \psi_\zeta^{-1}$ is a rational map of the form
 $z^{-n}(z^{2n}+\sum_{0\leq k< 2n}b_k(\zeta)z^k)$. The symmetry $F_\zeta(e^{2\pi i/n}z)=F_\zeta(z)$
   implies $F_\zeta(z)=z^n+b_0(\zeta)z^{-n}+b_n(\zeta)$. Since the two free critical values of $F_\zeta$ satisfies $\psi_\zeta(v_{\lambda_0}^+)+
   \psi_\zeta(v_{\lambda_0}^-)=0$,  we have $b_n(\zeta)=0$. So $F_\zeta=f_{b_0(\zeta)}$. The coefficient $b_0:\mathcal{U}\rightarrow \mathcal{B}$ is continuous with $\kappa (b_0(\zeta))=\zeta$. So $b_0$ is the local inverse of $\kappa$. This implies $\kappa$ is a covering map. Since $\mathbb{D}$ is simply connected,
    $\kappa$ is actually a conformal map.

The map $\kappa$ has a continuation to the boundary $\partial \mathcal{B}$. By the implicit function theorem, the boundary $\partial \mathcal{B}$
is an analytic curve except at $\kappa^{-1}(1)$. So $\partial \mathcal{B}$ is locally connected. Since for any $\lambda\in \partial \mathcal{B}$,
the multiplier $e^{2\pi i t}$ of the non-repelling cycle of $f_\lambda$ is uniquely determined by its angle $t\in \mathbb{S}$,
 the  boundary $\partial \mathcal{B}$
is a Jordan curve.
\end{proof}

\begin{rmk} \label{6b} By Theorem \ref{6a}, the multiplier map $\rho:\mathcal{B}\rightarrow \mathbb{D}$ is a double cover  if and
 only if $n$ is odd and $f_\lambda$ is $*$-renormalizable. 
For example, when $n=3$,  let $\mathcal{B}_+$ (resp.  $\mathcal{B}_-$) be  the cardioid  of the `largest' baby Mandelbrot set intersecting the positive (resp. negative) real axis, then

1.  $\rho:\mathcal{B}_+\rightarrow \mathbb{D}$ is a conformal map and   near the center $\frac{1}{8}$ of $\mathcal{B}_+$,
  $$\rho(\lambda)=24(\lambda-\frac{1}{8})+(216+156\sqrt{2})(\lambda-\frac{1}{8})^2+\mathcal{O}((\lambda-\frac{1}{8})^3).$$

2.  $\rho:\mathcal{B}_-\rightarrow \mathbb{D}$ is a double cover and   near the center $-\frac{1}{8}$ of $\mathcal{B}_-$,
  $$\rho(\lambda)=576(\lambda+\frac{1}{8})^2+\mathcal{O}((\lambda+\frac{1}{8})^3).$$
\end{rmk}

\section{Appendix: Lebesgue measure}\label{leb}

In this appendix, we shall prove Theorem \ref{lm} based on the Yoccoz puzzle theory.

We first recall the construction of Yoccoz puzzles in \cite{WQY}. Given a parameter $\lambda\in \mathcal{M}\cap \mathcal{F}$, we define a graph
${G}_{\lambda}(\theta_1,\cdots,\theta_N)$ by
$${G}_\lambda(\theta_1,\cdots,\theta_N)=
\partial B_\lambda^L \cup \Big((\mathbb{\widehat{C}}\setminus B_\lambda^L)\cap
\cup_{k\geq0}\big(\Omega_\lambda^{\tau^k(\theta_1)} \cup
\cdots\cup\Omega_\lambda^{\tau^k(\theta_N)}\big)\Big),$$
where $L>1$ and $\theta_1,\cdots,\theta_N\in\Theta$ are $\tau$-periodic angles. The angles $\theta_1,\cdots,\theta_N$ are chosen so that
the free critical orbit $\cup_{k\geq1}f_\lambda^k(C_\lambda)$ avoids the graph. The puzzle pieces
of depth $d\geq0$  are defined to be  all the connected
components of $f_\lambda^{-d}((\mathbb{\widehat{C}}\setminus B_\lambda^L)\setminus
{G}_\lambda(\theta_1,\cdots,\theta_N))$. For any point $z\in J(f_{\lambda})$ whose orbit avoids the graph,
the puzzle piece of depth $d$ containing $z$ is denoted by $P_d^\lambda(z)$.
We say the graph ${G}_\lambda(\theta_1,\cdots,\theta_N)$ is {\it admissible} if there exists a non-degenerate critical annulus $P_{d}^\lambda(c)\setminus \overline{P_{d+1}^\lambda(c)}$ (or $P_{0}^\lambda(f_{\lambda}^d(c))\setminus \overline{P_{1}^\lambda(f_{\lambda}^{d}(c))}$) for some $c\in C_\lambda$ and some $d\geq1$.

\begin{lem}[\cite{WQY}, Prop 4.1] \label{adm} Suppose $\lambda\in \mathcal{M}\cap \mathcal{F}$ and the map $f_\lambda$ is postcritically infinite, then
 there exists an admissible
 graph ${G}_\lambda(\theta_1,\cdots,\theta_N)$.
\end{lem}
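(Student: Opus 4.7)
The proof plan has two phases: first, select the periodic angles to produce a well-adapted graph; second, derive admissibility from postcritical infinity together with $\lambda\in\mathcal{M}$.

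In the first phase I select $\tau$-periodic angles $\theta_1,\ldots,\theta_N\in\Theta$ so that (i) for every $k\ge 0$ and $1\le i\le N$, the cut ray $\Omega_\lambda^{\tau^k(\theta_i)}$ is disjoint from the postcritical set $\bigcup_{j\ge 1}f_\lambda^j(C_\lambda)$, and (ii) the depth-$0$ puzzle pieces separate the critical set $C_\lambda$, placing each $c\in C_\lambda$ in its own piece. Condition (i) is possible because $\Theta_{per}$ is dense (in fact uncountable) in $\Theta$ while the postcritical set is countable, so only countably many periodic angles are excluded when one demands that none of their orbits meet a prescribed point; the continuity of cut rays from Lemma~\ref{3e} ensures that the resulting graph is still a topologically tame object. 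Condition (ii) is enforced by adding enough angles—the structural properties of cut rays in Section~\ref{cut} guarantee that they can be chosen to pass arbitrarily close to any prescribed sector between two consecutive critical points.

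In the second phase I examine for each $c\in C_\lambda$ the nested tower $P_0^\lambda(c)\supset P_1^\lambda(c)\supset\cdots$. I argue by contradiction: if every critical annulus $P_d^\lambda(c)\setminus\overline{P_{d+1}^\lambda(c)}$, together with every annulus $P_0^\lambda(f_\lambda^d(c))\setminus\overline{P_1^\lambda(f_\lambda^d(c))}$, were degenerate, then a boundary arc of $P_{d+1}^\lambda(c)$ would coincide with an arc of $\partial P_d^\lambda(c)$ for every $d\ge 1$. Pushing this information forward through $f_\lambda$, the puzzle pieces around each forward iterate $f_\lambda^d(c)$ exhibit no genuine refinement at any depth. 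Combined with $\lambda\in\mathcal{M}$ (which rules out escape to $B_\lambda$) and the choice in (i) (so no orbit can land on the graph), this combinatorial rigidity forces the orbit $\{f_\lambda^k(c)\}_{k\ge 0}$ to visit only finitely many puzzle pieces and to be eventually periodic, contradicting the hypothesis that $f_\lambda$ is postcritically infinite.

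The main obstacle is making precise the implication ``no non-degenerate critical annulus at any depth implies eventual periodicity of the postcritical orbit''. The cleanest implementation is to work with the first-return map to the nest around a chosen critical point: in the absence of any non-degenerate annulus, this first-return dynamics collapses onto a finite-state symbolic model, and in a non-escape map this symbolic finiteness is incompatible with an infinite postcritical set. The technical heart of the argument is keeping careful track of which boundary arcs of consecutive depth pieces are shared—whether they come from the original cut rays $\Omega_\lambda^{\theta_i}$, from iterated preimages of them, or from $\partial B_\lambda^L$—and this bookkeeping is handled by direct combinatorial analysis using the cut-ray structure developed in Section~\ref{cut}.
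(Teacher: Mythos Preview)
The paper does not prove this lemma here; it is imported wholesale from \cite{WQY}, Proposition~4.1. So there is no in-paper argument to compare against, and I assess your sketch on its own terms.

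Phase~1 contains a slip: $\Theta_{per}$, the set of $\tau$-periodic angles in $\Theta$, is countable, not uncountable (periodic points of a subshift form a countable set). The avoidance conclusion can still be rescued --- a fixed postcritical point lies on at most one cut ray $\Omega_\lambda^\theta$ up to $\theta\mapsto\theta+\tfrac12$, so only countably many angles are excluded and infinitely many periodic ones remain --- but your stated justification is wrong.

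The substantive gap is Phase~2. You assert that if every annulus $P_d^\lambda(c)\setminus\overline{P_{d+1}^\lambda(c)}$ and every $P_0^\lambda(f_\lambda^d(c))\setminus\overline{P_1^\lambda(f_\lambda^d(c))}$ is degenerate then the critical orbit is eventually periodic, and you yourself flag this as ``the main obstacle''. What follows (``collapses onto a finite-state symbolic model'', ``this symbolic finiteness is incompatible with an infinite postcritical set'') is a restatement of the desired conclusion, not a proof. Degeneracy of $P_0(w)\setminus\overline{P_1(w)}$ only says that the depth-$1$ piece containing $w$ shares a cut-ray boundary arc with its ambient depth-$0$ piece; this does not by itself constrain the itinerary of $w$ enough to force periodicity, and an infinite orbit can a priori remain forever in boundary-touching pieces. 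Turning this into an actual contradiction requires a concrete combinatorial analysis --- identifying, in terms of the sector itineraries of Section~\ref{cut}, which depth-$0$ pieces contain compactly embedded depth-$1$ subpieces, and then choosing the $\theta_i$ so that some iterate $f_\lambda^d(c)$ is forced into such a piece. That explicit work is the content of Proposition~4.1 in \cite{WQY}, and it is absent from your proposal.
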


 For $c\in C_\lambda$, the tableau
$T(c)$ is defined as the two-dimensional array
$(P_{d}^\lambda(f_\lambda^l(x)))_{d,l\geq0}$. We say $T(c)$ is {\it periodic} if there is an integer $p\geq 1$ such that
 $P_{d}^\lambda(f_\lambda^p(c))=P_{d}^\lambda(c)$ for all $d\geq0$.


\begin{lem}[\cite{WQY}, Lemma 5.2 and Propositions 7.2 and 7.3] \label{tableau} Suppose $\lambda\in \mathcal{M}\cap \mathcal{F}$ and the graph
${G}_{\lambda}(\theta_1,\cdots,\theta_N)$ is admissible.

1. If $T(c)$ is periodic for some $c\in C_\lambda$, then $f_\lambda$ is either renormalizable or $*$-renormalizable. Let $K$ be the small filled Julia set of this
($*$-)renormalization, then $K\cap \partial B_\lambda$ contains at most one point.

2. If none of $T(c)$ with $c\in C_\lambda$ is periodic, then  for any sequence of shrinking puzzle pieces $P_0^\lambda\supset P_1^\lambda
\supset P_2^\lambda\cdots$, the intersection $\cap_{d\geq 0}\overline{P_d^\lambda}$ is a singleton.
\end{lem}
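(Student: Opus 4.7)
The plan is to treat the two parts separately, using the Yoccoz puzzle structure together with the McMullen symmetry $f_\lambda(-z)=(-1)^n f_\lambda(z)$.

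For Part 1, assume $T(c)$ is periodic of period $p$, so $P_d^\lambda(f_\lambda^{kp}(c))=P_d^\lambda(c)$ for all $k,d\ge 0$. Admissibility of the graph supplies a nondegenerate critical annulus at some depth $d_0$. Then $f_\lambda^p:P_{d_0+p}^\lambda(c)\to P_{d_0}^\lambda(c)$ is a proper branched covering, and by thickening the pieces along external rays I get a polynomial-like triple $(f_\lambda^p,U,V)$. Its degree is $2$ exactly when $c$ is the only critical point whose orbit enters $P_{d_0}^\lambda(c)$ before the first return, giving an ordinary renormalization. When $n$ is odd, the symmetry may instead put $-c$ and $c$ into a single return cycle; then the map $-f_\lambda^{p/2}$ restricted to a thickened critical piece is quadratic-like, yielding the $*$-renormalization. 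Connectedness of the filled Julia set is automatic since $c\in K$.

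For the claim that $K\cap\partial B_\lambda$ has at most one point, I note that $K\subset\bigcap_k \overline{P_{d_0+kp}^\lambda(c)}$ and that each puzzle piece meets $\partial B_\lambda$ only at finitely many vertices of ${G}_\lambda(\theta_1,\dots,\theta_N)$, namely the landing points of finitely many external rays $R_\lambda(\theta)$ forming the graph. Any point of $K\cap\partial B_\lambda$ is therefore a vertex common to every pullback $P_{d_0+kp}^\lambda(c)$, hence the landing point of a ray $R_\lambda(\theta)$ that is periodic under the renormalization's first-return map. A quadratic-like map has at most one $\beta$-type fixed point on its filled Julia set accessible from the complement, and the cut-ray combinatorics forces this to be the unique candidate on $\partial B_\lambda$.

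For Part 2, the plan is a standard Branner--Hubbard--Yoccoz argument adapted to the McMullen setting. Given a nest $P_0^\lambda\supset P_1^\lambda\supset\cdots$, I split into cases according to whether the pullbacks defining the $P_d^\lambda$ pass through critical pieces finitely or infinitely often. In the non-critical tail, applying Koebe distortion along univalent pullbacks of a definite-modulus annulus produces geometric diameter shrinking. In the recurrent critical case, the tableau rules imply that without periodicity the critical annulus moduli accumulate along the principal nest, forcing $\bigcap_d\overline{P_d^\lambda}$ to be a singleton; non-periodicity is precisely what rules out the Siegel/renormalization-like branch of the dichotomy. The main obstacle is synchronizing these moduli estimates in the presence of both critical points $C_\lambda$ and the symmetry, and ruling out the possibility that $\bigcap_d \overline{P_d^\lambda}$ contains a continuum: admissibility of the graph together with the hypothesis that no $T(c)$ is periodic are exactly the inputs that let the classical Yoccoz rigidity close the argument.
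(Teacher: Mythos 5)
The paper never proves this lemma: it is imported wholesale from \cite{WQY} (Lemma 5.2 and Propositions 7.2, 7.3), so there is no in-paper argument to measure yours against, and your proposal has to stand on its own. It does not, because of a concrete error in your treatment of the claim that $K\cap\partial B_\lambda$ contains at most one point. You assert that a puzzle piece meets $\partial B_\lambda$ only in finitely many vertices of the graph, namely landing points of the rays forming $G_\lambda(\theta_1,\dots,\theta_N)$. That is false: the graph consists of the equipotential $\partial B_\lambda^L$ together with the parts of finitely many cut rays $\Omega_\lambda^{\tau^k(\theta_i)}$ lying outside $B_\lambda^L$, and $\partial B_\lambda$ itself is \emph{not} contained in the graph. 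Each cut ray meets $J(f_\lambda)$ only in a Cantor set, so $\partial B_\lambda\setminus G_\lambda$ is a dense relatively open subset of $\partial B_\lambda$; whole subarcs of $\partial B_\lambda$ lie inside open puzzle pieces, and $\overline{P_d^\lambda(c)}\cap\partial B_\lambda$ is in general a union of nondegenerate arcs, not a finite set. Consequently a point of $K\cap\partial B_\lambda$ need not be a ``vertex common to every pullback,'' and the subsequent appeal to ``at most one $\beta$-type fixed point accessible from the complement'' has nothing to attach to (it is also not a correct uniqueness principle: for a quadratic-like map with connected filled Julia set, repelling periodic points are all accessible from the complement, and a priori a point of $K\cap\partial B_\lambda$ need not even be fixed by the return map). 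The genuine proof has to control the boundary arcs dynamically: on $\partial B_\lambda$ (a Jordan curve here, by Theorem \ref{11a}) the map is an expanding degree-$n$ circle map, the periodic tableau pins down a fixed itinerary of the nest $\overline{P_{d_0+kp}^\lambda(c)}\cap\partial B_\lambda$ relative to the cut-ray crossings, and expansion forces the set of boundary points realizing that itinerary to be at most a singleton; this is the content of the cited results in \cite{WQY}.

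The remaining parts of your sketch are closer to the mark but are outlines rather than proofs. In the first half of Part 1 you never verify that the thickened return map actually has degree $2$; this uses that $f_\lambda$ has a single free critical orbit up to the symmetry $z\mapsto -z$, so that either exactly one critical point returns (ordinary renormalization) or a symmetric pair does, in which case one passes to $-f_\lambda^{p/2}$ (the $*$-renormalization), and these degree counts are exactly what \cite{WQY} checks. In Part 2, the assertion that ``without periodicity the critical annulus moduli accumulate along the principal nest'' is precisely the hard Branner--Hubbard--Yoccoz divergence estimate; stating it is not proving it, though citing \cite{WQY} for it would be legitimate since the paper under review does exactly that. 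The one place where you supply a genuinely incorrect substitute for the missing argument is the boundary-intersection claim above, and that is where the proposal fails.
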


Here is a remark for Lemma \ref{tableau}: if some $T(c)$ is periodic,  then there exist $\epsilon\in\{\pm1\}$, $d\geq0$ and $p\geq1$ such that $(\epsilon f_\lambda^p, P_{d+p}^\lambda(c),  P_{d}^\lambda(c))$ is the ($*$-)renormalization (see \cite{WQY} for  more details); if none of $T(c)$ is periodic, by carrying out the Yoccoz puzzle theory one step further, we have

\begin{thm}[Lebesgue measure] \label{lm2} Suppose that $\lambda\in \mathcal{M}\cap \mathcal{F}$ and the graph
${G}_{\lambda}(\theta_1,\cdots,\theta_N)$ is admissible. If none of $T(c)$ with $c\in C_\lambda$ is periodic, then  the Lebesgue measure of $J(f_\lambda)$
is zero.
\end{thm}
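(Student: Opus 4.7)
The plan is to follow the Lyubich strategy (cf. \cite{L}) for establishing $\operatorname{Leb}(J(f_\lambda))=0$ in the non-renormalizable case, adapted to the Yoccoz puzzle already built in \cite{WQY}. Argue by contradiction: assume $J(f_\lambda)$ has positive Lebesgue measure, so by the Lebesgue density theorem almost every $z\in J(f_\lambda)$ is a density point. Since almost every such $z$ has orbit avoiding the graph $G_\lambda(\theta_1,\ldots,\theta_N)$ (which has measure zero) and is recurrent (by Birkhoff applied to the measurable partition into puzzle pieces), we may work with a density point $z$ whose forward orbit enters every neighborhood of at least one critical point $c\in C_\lambda$ infinitely often.

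Next I would set up a \emph{principal nest} of critical puzzle pieces around each $c\in C_\lambda$. Using the admissibility of the graph together with the assumption that no $T(c)$ is periodic, one can iterate the first-return/first-landing constructions: starting from a critical annulus $P_d^\lambda(c)\setminus \overline{P_{d+1}^\lambda(c)}$, one produces a nested sequence of critical pieces whose combinatorial depth strictly increases, and shows that infinitely often the corresponding annulus has modulus bounded away from zero (the non-periodicity prevents the nest from stabilizing into a renormalization, so the ``central cascade'' phenomenon that could kill the moduli cannot persist forever at any level). By Lemma \ref{tableau}(2) these pieces shrink to $c$.

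Now I would transport this geometric control to the density point $z$. Whenever $f_\lambda^{n_k}(z)$ lands deep in a critical puzzle piece $P_{d_k}^\lambda(c)$, the component $U_k$ of $f_\lambda^{-n_k}(P_{d_{k}-m}^\lambda(c))$ containing $z$ can be chosen so that the iterate $f_\lambda^{n_k}$ is univalent on a slightly larger piece (a standard consequence of admissibility and the fact that critical values have been captured by the principal nest). Koebe's distortion theorem then pulls back the annulus of definite modulus around $c$ to an annulus of comparable shape around $z$, producing an annulus $A_k \subset U_k \setminus V_k$ with $\operatorname{mod}(A_k)$ bounded below, while $\operatorname{diam}(V_k)\to 0$. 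Each $A_k$ therefore occupies a definite proportion of a round disk around $z$, and its interior has no intersection with $J(f_\lambda)$ (it lies in the union of preimages of $B_\lambda^L$ or of non-Julia parts of puzzle pieces). This contradicts $z$ being a density point of $J(f_\lambda)$ and closes the argument.

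The main obstacle I anticipate is in the second paragraph: obtaining infinitely many critical annuli of definite modulus in the principal nest. In the polynomial setting this rests on the Branner--Hubbard--Yoccoz analysis together with Lyubich's control of central cascades; in our rational setting one needs to verify that the same combinatorial dichotomy (persistent recurrence vs.\ renormalization) applies once the tableaux are non-periodic, and that the puzzle pieces behave well away from the basin $B_\lambda$. All other steps---Koebe distortion, pulling back to density points, and the final contradiction with the Lebesgue density theorem---are then routine consequences of the machinery already set up in \cite{WQY}.
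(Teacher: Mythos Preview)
Your overall strategy---follow Lyubich, find a density point, pull back geometric information from the critical nest---is in the right spirit, but the third paragraph contains a genuine gap. You claim that the pulled-back annulus $A_k\subset U_k\setminus V_k$ ``has no intersection with $J(f_\lambda)$ (it lies in the union of preimages of $B_\lambda^L$ or of non-Julia parts of puzzle pieces).'' This is false: for $\lambda\in\mathcal{M}$ the Julia set is connected and threads through every puzzle annulus $P_d^\lambda(c)\setminus\overline{P_{d+1}^\lambda(c)}$. A definite-modulus annulus between nested puzzle pieces gives you no direct information about the density of $J(f_\lambda)$ in $U_k$, so the contradiction does not follow as written. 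Relatedly, in the persistently recurrent regime the pullbacks through which you would like to apply Koebe are \emph{not} univalent on any enlarged piece, since the critical orbit revisits every critical puzzle piece; your ``standard consequence of admissibility'' is therefore not available.

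The paper handles this by splitting into two cases according to the Yoccoz $\tau$-function. When $C_\lambda$ is not persistently recurrent (Proposition~\ref{nonper}), one finds infinitely many times $n_k$ at which $f_\lambda^{n_k}$ has bounded degree on a piece of bounded shape around $z$; then one pulls back a fixed Fatou disk $B(\zeta,r)\subset F(f_\lambda)$ (not a puzzle annulus) to obtain a definite-area hole in each $V_k(z)$, contradicting Lebesgue density (Lemma~\ref{density}). When $C_\lambda$ is persistently recurrent, bounded-degree pullbacks are unavailable, and the paper instead builds a repelling system $g:\mathbf{U}\to\mathbf{V}$ capturing $C_\lambda$ (Theorem~\ref{pers}), proves $\sum_d\mathbf{m}(\mathbf{A}^d(z))=\infty$ for $z\in K(g)$, and applies the area--modulus inequality $\operatorname{area}(D)\geq\operatorname{area}(K)(1+4\pi\,\mathbf{m}(D\setminus K))$ to conclude $\operatorname{area}(K(g))=0$ (Theorem~\ref{zerom}). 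Your sketch could be repaired by invoking this area--modulus inequality in place of the erroneous ``annulus disjoint from Julia set'' step, together with a careful case split; but as it stands the argument is incomplete.
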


\noindent{\it Proof of Theorem \ref{lm} assuming Theorem \ref{lm2}.}
We assume $\lambda\in \mathcal{F}_0$ (note that when $\lambda$ is real and positive, the map $f_\lambda$ is postcritically finite).
 It's known that if $f_\lambda$ is postcritically finite, then the Lebesgue measure of
$J(f_\lambda)$ is zero. So we assume further $\lambda\in \mathcal{F}$ and $f_\lambda$ is postcritically infinite. By Lemma \ref{adm}, there is an
admissible graph
${G}_{\lambda}(\theta_1,\cdots,\theta_N)$. By the assumption $f_\lambda^{k}(v_\lambda^+)\in \partial B_\lambda$, none of $T(c)$ with $c\in C_\lambda$ is periodic. It follows from Theorem \ref{lm2} that  the Lebesgue measure of $J(f_\lambda)$
is zero.
\hfill $\Box$

In this section, we actually prove Theorem \ref{lm2}  following Lyubich \cite{L}.






For $k\geq 0$, let $\mathcal{P}_k$ be
the collection of all puzzle pieces of depth $k$.
We first show that $d_k=\max\{{\rm diam}(P);P\in
\mathcal{P}_k\}\rightarrow0$ as $k\rightarrow\infty$. To see this,
suppose that there exist $\varepsilon>0$ and a sequence of puzzle
pieces $P_{n_k}\in\mathcal{P}_{n_k}$ with $n_1<n_2<\cdots$ and ${\rm
diam}(P_{n_k})\geq\varepsilon$. There is
$P^*_{n_1}\in\mathcal{P}_{n_1}$ such that $I_1=\{n_k; P_{n_k}\subset
P^*_{n_1}\}$ is an infinite set. For $k>1$, we define $P^*_{n_k}$
and $I_k$ inductively as follows: $P^*_{n_{k-1}}\supset
P^*_{n_k}\in\mathcal{P}_{n_k}$ and the set $I_k=\{j\in I_{k-1};
P_j\subset P^*_{n_k}\}$ is an infinite set. Then $P^*_{n_k},k\geq1$
is a sequence of shrinking puzzle pieces with  ${\rm
diam}(P^*_{n_k})\geq\varepsilon$. This contradicts the fact that
$\bigcap_k \overline{P^*_{n_k}}$ consists of a single point (see
Lemma \ref{tableau}).

We define the Yoccoz $\tau$-function as follows.
We choose some
$c\in C_\lambda$. For each $d\geq1$, we define $\tau(d)$ to be the
biggest integer $k\in [0,d-1]$ such that the puzzle piece
$f_\lambda^{d-k}(P_d^\lambda(c))$ contains some critical point in
$C_\lambda$, we set $\tau(d)=-1$ if no such integer exists. Since
$f_\lambda(e^{\pi i/n}z)=-f_\lambda(z)$, by the symmetry of puzzle
pieces (namely, $P_d^\lambda(e^{\pi i/n}z)=e^{\pi i/n}P_d^\lambda(z)$ for all
$d\geq1$, see Lemma 4.1 in \cite{WQY}), we see that the Yoccoz
$\tau$-function is well-defined (independent of the choice of $c\in
C_\lambda$). Moreover, it satisfies $\tau(d+1)\leq \tau(d)+1$.

We say that the critical set $C_\lambda$ is {\it non-recurrent} if
$\tau(d)$ is uniformly bounded for all $d\geq1$; {\it recurrent} if
$\limsup \tau(d)=\infty$ (this definition is in fact consistent with the definition in Section \ref{cut}); {\it persistently recurrent} if $\liminf
\tau(d)=\infty$.

Let $U\subsetneq\mathbb{C}$ be a simply connected planar domain and
$z\in U$. The shape of $U$ about $z$ is defined by:
$${Shape}(U,z)=\sup_{x\in \partial U}|x-z|/\inf_{x\in \partial U}|x-z|.$$

\begin{lem} \label{area}
Let $U,V$ be two planar disks with $V\Subset U\neq \mathbb{C}$, $x\in V$. Suppose that $Shape(U,x)\leq C$, $Shape(V,x)\leq C$,
$ mod(U-\overline{V})\leq m$, then there is a constant
$\delta=\delta(C,m)\in (0,1)$, such that
$area(V)\geq\delta area(U).$
\end{lem}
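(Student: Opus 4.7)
The plan is to sandwich both $U$ and $V$ between concentric round disks using the shape hypotheses, and then compare the modulus of a concrete round annulus with $\mathrm{mod}(U\setminus\overline V)$ by monotonicity. Introduce the inner and outer radii
\[
r_U^{\rm in}=\inf_{y\in\partial U}|y-x|,\quad r_U^{\rm out}=\sup_{y\in\partial U}|y-x|,\quad r_V^{\rm in}=\inf_{y\in\partial V}|y-x|,\quad r_V^{\rm out}=\sup_{y\in\partial V}|y-x|.
\]
The shape hypotheses are $r_U^{\rm out}\le C\,r_U^{\rm in}$ and $r_V^{\rm out}\le C\,r_V^{\rm in}$, and since $x\in V\Subset U$ with both domains simply connected one has the automatic inclusions $B(x,r_U^{\rm in})\subset U\subset B(x,r_U^{\rm out})$ together with $B(x,r_V^{\rm in})\subset V$ and $\overline V\subset \overline{B(x,r_V^{\rm out})}$. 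All the area estimates will be read off from a single inequality relating these four radii.

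I would split into two cases depending on whether $r_V^{\rm out}\ge r_U^{\rm in}$ or $r_V^{\rm out}<r_U^{\rm in}$. In the first case the shape bounds alone suffice: chaining
\[
r_V^{\rm in}\ge r_V^{\rm out}/C\ge r_U^{\rm in}/C\ge r_U^{\rm out}/C^2
\]
gives $\mathrm{area}(V)\ge \pi(r_V^{\rm in})^2\ge \pi(r_U^{\rm out})^2/C^4\ge \mathrm{area}(U)/C^4$, with no reference to $m$. In the remaining (main) case $r_V^{\rm out}<r_U^{\rm in}$, consider the concrete round annulus
\[
A^{*}\;:=\;B(x,r_U^{\rm in})\setminus \overline{B(x,r_V^{\rm out})}.
\]
Its bounded complementary component $\overline{B(x,r_V^{\rm out})}$ contains $\overline V$, while its unbounded complementary component $\mathbb{C}\setminus B(x,r_U^{\rm in})$ contains $\mathbb{C}\setminus U$. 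Hence $A^{*}\subset U\setminus \overline V$ and $A^{*}$ separates the two complementary pieces of $U\setminus \overline V$ in the correct (nested) way. By monotonicity of modulus for doubly-connected domains with properly nested boundary structure,
\[
\frac{1}{2\pi}\log\!\Bigl(\frac{r_U^{\rm in}}{r_V^{\rm out}}\Bigr)=\mathrm{mod}(A^{*})\;\le\;\mathrm{mod}(U\setminus\overline V)\;\le\; m,
\]
so $r_U^{\rm in}\le e^{2\pi m}\,r_V^{\rm out}$. Combined with the shape inequalities,
\[
r_V^{\rm in}\;\ge\; r_V^{\rm out}/C\;\ge\; r_U^{\rm in}/(C e^{2\pi m})\;\ge\; r_U^{\rm out}/(C^{2} e^{2\pi m}),
\]
and squaring yields $\mathrm{area}(V)/\mathrm{area}(U)\ge 1/(C^{4}e^{4\pi m})=:\delta(C,m)$.

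I do not expect any serious obstacle; the whole argument is a one-line modulus comparison flanked by two shape-distortion estimates. The only point requiring a little care is verifying that $A^{*}$ truly separates the two boundary components of $U\setminus\overline V$ so that the monotonicity of modulus applies, and this has been handled by the inclusions noted in the first paragraph. The constant $\delta(C,m)=C^{-4}e^{-4\pi m}$ has the correct dependence: it degenerates only when the shapes are arbitrarily distorted or when the annulus $U\setminus\overline V$ becomes arbitrarily fat, both of which are exactly what the hypotheses rule out.
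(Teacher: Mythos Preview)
Your argument is correct. The paper itself does not supply a proof: it only remarks that the lemma ``is based on the Koebe distortion theorem'' and leaves it as an exercise. Your route is genuinely different and more elementary: instead of uniformizing $U$ by a Riemann map and invoking Koebe distortion to transfer area/diameter control, you work directly with the four radii $r_U^{\rm in},r_U^{\rm out},r_V^{\rm in},r_V^{\rm out}$ and use only (i) the elementary sandwich $B(x,r^{\rm in})\subset W\subset B(x,r^{\rm out})$ for a bounded Jordan disk $W\ni x$, and (ii) monotonicity of the modulus applied to the round annulus $A^*=B(x,r_U^{\rm in})\setminus\overline{B(x,r_V^{\rm out})}$, which you correctly check is essentially embedded in $U\setminus\overline V$. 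This yields the explicit constant $\delta(C,m)=C^{-4}e^{-4\pi m}$, covering both of your cases. The only implicit step worth making explicit is that a simply connected $U\subsetneq\mathbb{C}$ with $\mathrm{Shape}(U,x)<\infty$ is automatically bounded (bounded $\partial U$ forces $U$ bounded, since an unbounded simply connected domain in $\mathbb{C}$ cannot have compact complement), which justifies $U\subset B(x,r_U^{\rm out})$. Compared to a Koebe-based proof, your approach is shorter and gives a clean explicit $\delta$; the Koebe route would be natural if one wanted sharper constants or if the shape bound were replaced by weaker conformal-geometric data.
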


The proof of Lemma \ref{area} is based on the Koebe distortion theorem.
We leave it to the reader as an exercise.

\begin{lem} \label{density} Let $f$ be a rational map with Julia set
$J(f)\neq\mathbb{\widehat{C}}$. Let $z\in J(f)$,   if
there exist a number $\epsilon>0$, a sequence of integers $0\leq n_1< n_2<\cdots$ and a constant $N>0$ such that

1.  For any $k\geq0$, the component $U_k(z)$ of
$f^{-n_k}(B(f^{n_k}(z),\epsilon))$ that contains $z$ is a disk.

2. ${\rm deg} (f^{n_k}|_{U_k(z)})\leq N$ for all $k\geq1$.


Then $z$ is not  a Lebesgue density point of $J(f)$.
\end{lem}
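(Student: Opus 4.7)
The plan is to exhibit, inside each $U_k(z)$, a subset of the Fatou set occupying a proportion of area bounded below independently of $k$, and then to show that $U_k(z)$ is round enough around $z$ that this translates into failure of Lebesgue density. Since $J(f)\neq\widehat{\mathbb{C}}$, the Julia set has empty interior. The orbit $\{f^{n_k}(z)\}\subset J(f)$ is precompact, so one passes to a subsequence with $f^{n_k}(z)\to w^{*}\in J(f)$. Choose $y\in F(f)$ close to $w^{*}$ and $r>0$ with $\overline{B(y,r)}\subset F(f)$; for $k$ large, $\overline{B(y,r)}\subset B(f^{n_k}(z),\epsilon/2)$. Let $W_k\Subset U_k(z)$ be the component of $f^{-n_k}(B(f^{n_k}(z),\epsilon/2))$ containing $z$, and let $V_k$ be any component of $f^{-n_k}(B(y,r))$ lying in $W_k$; automatically $V_k\subset F(f)$.

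Since $f^{n_k}:U_k(z)\to B(f^{n_k}(z),\epsilon)$ is proper of degree at most $N$, the modulus--degree inequality gives $\mathrm{mod}(U_k(z)\setminus\overline{W_k})\geq (\log 2)/(2\pi N)=:m_0>0$. I would then factor $f^{n_k}$ through the Riemann map $\psi_k:\mathbb{D}\to U_k(z)$ sending $0$ to $z$ and through an affine normalization of the target, turning $f^{n_k}$ into a finite Blaschke product $\Phi_k:\mathbb{D}\to\mathbb{D}$ of degree $\leq N$ with $\Phi_k(0)=0$. By conformal invariance of modulus, $\psi_k^{-1}(W_k)$ sits in a fixed round sub-disk of $\mathbb{D}$, and Koebe's theorem applied to the univalent $\psi_k$ on that sub-disk shows (i) $W_k$ has bounded shape around $z$, i.e.\ $B(z,\rho_k)\subset W_k\subset B(z,C\rho_k)$ for some $\rho_k>0$ and a universal constant $C=C(N)$, and (ii) Euclidean area ratios inside $W_k$ are comparable to the corresponding ratios in the $\mathbb{D}$-picture. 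The target disk corresponding to $B(y,r)$ converges to a fixed disk $D^{*}\Subset\mathbb{D}$ at positive distance from $\partial\mathbb{D}$; since Blaschke products of degree $\leq N$ fixing $0$ form a normal family, $\mathrm{area}_{\mathbb{D}}(\Phi_k^{-1}(D^{*}_k))$ has a uniform positive lower bound.

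Combining gives $\mathrm{area}(V_k)\geq\delta\,\mathrm{area}(W_k)\gtrsim\rho_k^{2}\asymp\mathrm{area}(B(z,C\rho_k))$; since $V_k\subset F(f)\cap B(z,C\rho_k)$, the density of $J(f)$ inside $B(z,C\rho_k)$ is bounded above by some $1-\delta'<1$ uniformly in $k$. A short normal-family argument forces $\rho_k\to 0$: were $B(z,\rho)\subset U_k(z)$ for some fixed $\rho>0$ and infinitely many $k$, then after possibly shrinking $\epsilon$ the images $f^{n_k}(B(z,\rho))\subset B(f^{n_k}(z),\epsilon)$ would all lie in a fixed proper compact subset of $\widehat{\mathbb{C}}$, and by Montel $\{f^{n_k}|_{B(z,\rho)}\}$ would be normal, contradicting $z\in J(f)$. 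Therefore $z$ cannot be a Lebesgue density point of $J(f)$.

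The main technical obstacle is that $f^{n_k}|_{W_k}$ is not univalent, so Koebe distortion does not apply directly to transfer area proportion across this map. I sidestep this by factoring through the univalent Riemann map $\psi_k$ (where Koebe does apply) and reducing the non-univalent part to the rigid model of Blaschke products of degree $\leq N$ fixing $0$; the Schwarz-lemma bound $|\Phi_k(\zeta)|\leq|\zeta|$ makes this family normal and yields the needed uniform lower bound on the pulled-back area.
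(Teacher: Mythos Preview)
Your argument is correct and follows the same outline as the paper: pass to a subsequence with $f^{n_k}(z)\to w^{*}$, fix a Fatou disk near $w^{*}$, pull it back, and show it occupies a definite proportion of the area of a piece of bounded shape around $z$ whose diameter tends to $0$. The substantive difference lies in how the bounded-degree distortion is controlled. The paper quotes a shape-distortion lemma for branched covers (\cite{WQY}, Lemma~6.1) to bound ${\rm Shape}(V_k(z),z)$ and ${\rm Shape}(D_k,p)$ directly, and then invokes the separate Lemma~\ref{area} to convert bounded shape and bounded modulus into the inequality ${\rm area}(D_k)\geq\delta\,{\rm area}(V_k(z))$. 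You instead uniformize $U_k(z)$ by its Riemann map $\psi_k$, reducing $f^{n_k}$ to a Blaschke product $\Phi_k$ of degree at most $N$ fixing $0$; Schwarz gives $\tfrac12\mathbb{D}\subset\psi_k^{-1}(W_k)$, the modulus bound traps $\psi_k^{-1}(W_k)$ in a fixed sub-disk, Koebe on the univalent $\psi_k$ then transfers shape and area ratios, and compactness of the family of such Blaschke products yields the uniform lower bound on ${\rm area}\big(\Phi_k^{-1}(D_k^{*})\big)$. Your route is more self-contained (it essentially re-derives the shape-distortion lemma in the Blaschke model), while the paper's is shorter by outsourcing that step. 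One small slip: the area bound you obtain is for the full preimage $\Phi_k^{-1}(D_k^{*})$, not a single component $V_k$; either replace $V_k$ by the full preimage (still contained in $F(f)$) or note that one of the at most $N$ components carries at least a $1/N$-share.
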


\begin{proof}
By passing to a subsequence, we assume $f^{n_k}(z)\rightarrow w\in
J(f)$ as $k\rightarrow\infty$. We may assume further $z,w\neq \infty$ by a suitable change of coordinate. Choose $\epsilon_0< \epsilon$, when
$k$ is large,  we have $f^{n_k}(z)\in B(w,\epsilon_0/2)\subset
B(w,\epsilon_0)\subset B(f^{n_k}(z),\epsilon)$. Let  $V_k(z)$ be the
component of $f^{-n_k}(B(w,\epsilon_0/2))$ that contains $z$. Then
$V_k(z)$ is a disk and ${\rm deg} (f^{n_k}|_{V_k(z)})\leq N$.
 By
shape distortion (see \cite{WQY}, Lemma 6.1), the shape of $V_k(z)$
about $z$ is bounded above by some constant depending on $N$. We then
show that $diam(V_k(z))\rightarrow0$ as $k\rightarrow\infty$. In
fact, if not, again by choosing a subsequence, we assume
$V_k(z)$ contains a round disk $B(z,\rho)$ for some $\rho>0$. Then
for any large $k$, the image $f^{n_k}(B(z,\rho))$ is contained in $
B(w,\epsilon_0/2)$. But this contradicts the fact that  $J(f)\subset f^{n_k}(B(z,\rho))$ for large $k$ (see \cite{M}).

 Since
$J(f)\neq\mathbb{\widehat{C}}$, there is a round disk
 $B(\zeta, r)\Subset
B(w,\epsilon_0/2)\cap F(f)$, here $F(f)$ is the Fatou set of $f$.
Take a component $D_k$ of $f^{-n_k}(B(\zeta, r))$ in $V_k(z)$ and
$p\in f^{-n_k}(\zeta)\cap D_k$, then
by shape distortion (see \cite{WQY}, Lemma 6.1), there is a constant $C>0$ such that
 $Shape(D_k,p)\leq C Shape(B(\zeta, r),\zeta)=C$
 $Shape(V_k(z),p)\leq C Shape(B(w,\epsilon_0/2),\zeta)\leq C\epsilon_0/r$.
 Moreover,
$mod(V_k(z)\setminus\overline{D_k})\leq
mod(B(w,\epsilon_0/2)\setminus \overline{B(\zeta, r)}).$ It follows from Lemma \ref{area} that
there is a constant $\delta$ with $area(D_k)\geq \delta
area(V_k(z))$. So $$area(J(f)\cap V_k(z))\leq area(V_k(z)-D_k)\leq
(1-\delta)area(V_k(z)).$$ This implies $z$ is not a Lebesgue density
point.
\end{proof}

\begin{pro}\label{nonper} If $C_\lambda$ is not persistently recurrent, then the
Lebesgue measure of the Julia set $J(f_\lambda)$ is zero.
\end{pro}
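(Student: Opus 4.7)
The plan is to show that Lebesgue-almost every $z \in J(f_\lambda)$ fails to be a density point of $J(f_\lambda)$, invoking Lemma \ref{density}; since $J(f_\lambda) \neq \mathbb{\widehat{C}}$ (because $B_\lambda$ is a basin), the Lebesgue density theorem then forces $|J(f_\lambda)| = 0$. As a preliminary reduction, the grand orbit of the puzzle graph $G_\lambda(\theta_1,\dots,\theta_N)$ has two-dimensional Lebesgue measure zero, so we may restrict to points $z \in J(f_\lambda)$ whose orbits avoid this graph. For every such $z$ the nested puzzle pieces $P_k^\lambda(z)$ are well-defined and, by Lemma \ref{tableau}(2) (which applies because none of the tableaux $T(c)$ is periodic, by hypothesis), their diameters tend to zero.

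The key observation is that the degree of the restriction $f_\lambda^k : P_k^\lambda(z) \to P_0^\lambda(f_\lambda^k(z))$ equals $2^{M(z,k)}$, where $M(z,k)$ counts the indices $0 \le j < k$ such that $f_\lambda^j(P_k^\lambda(z))$ is a critical puzzle piece. To exploit the hypothesis $\liminf_{d\to\infty}\tau(d) < \infty$, pick $c \in C_\lambda$, an integer $N$, and depths $d_k \to \infty$ with $\tau(d_k) \leq N$. By definition of $\tau$, the forward orbit of $P_{d_k}^\lambda(c)$ encounters a critical puzzle piece only at step $0$ (at $c$) and then again no earlier than step $d_k - N$, so the branched cover
$$f_\lambda^{d_k - \tau(d_k)} : P_{d_k}^\lambda(c) \longrightarrow P_{\tau(d_k)}^\lambda(f_\lambda^{d_k-\tau(d_k)}(c))$$
has degree exactly $2$ and its target lies in the finite collection $\bigcup_{0 \le j \le N} \mathcal{P}_j$. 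This provides a uniform supply of bounded-degree pullbacks through $c$.

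Now I would split the relevant points $z$ into two classes. In the \emph{non-recurrent} class, the forward orbit of $z$ visits the critical pieces at their matching depths only finitely often; then $f_\lambda^k|_{P_k^\lambda(z)}$ is eventually univalent, and taking $\epsilon$ smaller than the $\mathbb{\widehat{C}}$-distance from $f_\lambda^k(z)$ to $\partial P_0^\lambda(f_\lambda^k(z))$ (choosing a subsequence so that $f_\lambda^{n_k}(z)$ stays in a compact piece of $J(f_\lambda)$), Lemma \ref{density} immediately gives a non-density point. In the \emph{recurrent} class, the orbit returns to some fixed critical piece $P_{\tau(d_k)}^\lambda(c')$ infinitely often along times $n_k \to \infty$; then the pullback of a small round disk around $f_\lambda^{n_k}(z)$ decomposes as a concatenation of univalent pullbacks (outside critical pieces) and the degree-$2$ pullbacks from Step 2. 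Bookkeeping shows the total degree of $f_\lambda^{n_k}|_{U_k(z)}$ remains bounded by a constant depending only on $N$ and the graph, so Lemma \ref{density} applies.

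The main obstacle is the recurrent class: one must verify that the bound $\tau(d_k) \le N$ along a single subsequence actually propagates to arbitrary orbits $z$ that hit critical pieces infinitely often, while keeping the disk radius $\epsilon$ uniform in $k$. The way I would handle this is to take $\epsilon$ equal to a Euclidean margin of the finite target family $\{P_j^\lambda(c'') : c'' \in C_\lambda,\, 0 \le j \le N\}$, and to show using shape distortion (as in Lemma \ref{density}) that each intermediate pullback remains a disk of definite shape, so the nondegeneracy condition of Lemma \ref{density} is preserved throughout the concatenated pullback. The exceptional set of $z$ not covered by either class, together with the grand orbit of the graph, is easily checked to be a countable union of Lebesgue-null sets, completing the proof.
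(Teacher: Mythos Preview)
Your overall strategy---apply Lemma~\ref{density} to almost every $z\in J(f_\lambda)$---is exactly the paper's, but two of the three technical ingredients the paper relies on are missing from your sketch, and without them the argument does not close.

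The first missing ingredient is the uniform $\epsilon$. Lemma~\ref{density} requires a \emph{single} $\epsilon>0$ working for all $n_k$; knowing that the targets lie in the finite family $\{P_j^\lambda(c''):0\le j\le N\}$ is not enough, because $f_\lambda^{d_k-\tau(d_k)}(c)$ may sit arbitrarily close to $\partial P_{\tau(d_k)}^\lambda(c')$. The paper handles this by combining admissibility with recurrence to fix a depth $d$ for which the critical annulus $P_d^\lambda(c)\setminus\overline{P_{d+1}^\lambda(c)}$ is non-degenerate, and then working only with the infinite set $\Lambda=\{k:\tau(k)=d,\ \tau(k+1)=d+1\}$ (nonempty since $\tau(k+1)\le\tau(k)+1$); for $k\in\Lambda$ the image $f_\lambda^{k-d}(c)$ lands in $\bigcup_\zeta P_{d+1}^\lambda(\zeta)\Subset\bigcup_\zeta P_d^\lambda(\zeta)$, and this compact containment is what produces the uniform $\epsilon$. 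The second missing ingredient is the degree control for a generic $z$. Your sentence ``bookkeeping shows the total degree remains bounded'' is precisely the crux and is not justified: a priori the orbit of $z$ could pass through critical pieces many times, each doubling the degree. The paper first invokes McMullen's result that for a.e.\ $z\in J(f_\lambda)$ the orbit accumulates on $P(f_\lambda)$ (hence, in the recurrent case, on $C_\lambda$), so that for each $k\in\Lambda$ there is a \emph{first} moment $n_k-k-1$ at which $f_\lambda^{n_k-k-1}(P_{n_k}^\lambda(z))=P_{k+1}^\lambda(c')$; this first-return choice makes $f_\lambda^{n_k-k-1}$ univalent on $P_{n_k}^\lambda(z)$, after which one composes with the single degree-$2$ step $f_\lambda^{k-d}:P_{k+1}^\lambda(c')\to P_{d+1}^\lambda(\zeta)$. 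Your proposed ``concatenation'' gives no such bound on the number of degree-$2$ factors, and your two classes, as phrased, neither cover a full-measure set nor are well defined (``visits critical pieces at their matching depths finitely often'' depends on the ambient depth $k$).
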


\begin{proof}
Let $P(f_\lambda)=\overline{\cup_{k\geq 1}f_\lambda^k(C_\lambda)}$.
It's known (\cite{Mc4}, Theorem 3.9)  that for almost all $z\in
J(f_\lambda)$, the spherical distance
$d_{\mathbb{\widehat{C}}}(f_\lambda^n(z),P(f_\lambda))\rightarrow 0$ as $n\rightarrow\infty$.

Suppose that the critical set $C_\lambda$ is recurrent but not
persistently recurrent, then there is a positive integer $L$ such
that the set $\{k;\tau(k)\leq L\}$ is infinite. The recurrence of
$C_\lambda$ implies that there is $d\geq L$ such that the annulus
$P_d^\lambda(c)\setminus \overline{P_{d+1}^\lambda(c)}$ for some (hence all) $c\in C_\lambda$
is non-degenerate. Since $\tau(k+1)\leq \tau(k)+1$, the set
$\Lambda=\{k;\tau(k)=d, \tau(k+1)=d+1\}$ is infinite. Moreover
$\{f_\lambda^{k-d}(c); k\in\Lambda\}\subset \cup_{\zeta\in
C_\lambda}P_{d+1}(\zeta)\Subset \cup_{\zeta\in
C_\lambda}P_{d}(\zeta)$. So any critical point $c\in C_\lambda$
satisfies the conditions in Lemma \ref{density}, thus it is not a
Lebesgue density point. We consider a point $z\in
J(f_\lambda)\setminus C_\lambda$ with $\lim
d_\mathbb{\widehat{C}}(f_\lambda^n(z),P(f_\lambda))=0$.  We may assume that the forward
orbit of $z$ does not meet the graph ${G}_\lambda(\theta_1,\cdots,\theta_N)$ (for else
$z$ is not a Lebesgue density point by Lemma \ref{density}). In that
case, for each $k\in \Lambda$, there
is $n_k>k$ and $c'\in C_\lambda$ such that
$f_\lambda^{n_k-k-1}(P_{n_k}^\lambda(z))=P_{k+1}^\lambda(c')$ and
$f_\lambda^j(P_{n_k}^\lambda(z)), 0\leq j< n_k-k-1$ meets no critical point.
One can easily verify that $z$ satisfies the conditions in Lemma
\ref{density}, and  is not a Lebesgue density point of
$J(f_\lambda)$.

 If the critical
set $C_\lambda$ is not recurrent, one can verify that each point
$z\in J(f_\lambda)$ satisfies the condition in Lemma \ref{density}.
Thus $J(f_\lambda)$ carries no Lebesgue density point. The proof is
similar to, but easier than the previous argument. We omit the
details.
\end{proof}

We say a holomorphic map $g:\mathbf{U}\rightarrow \mathbf{V}$ is a
{\it repelling system} if $\mathbf{U}\Subset\mathbf{V}$,  the
boundary $\partial \mathbf{U}$ avoids the critical orbit of $g$ and
both $\mathbf{U}$ and $\mathbf{V}$ consist of finitely many disk
components. The filled Julia set of $g$ is defined by
$K(g)=\bigcap_{k\geq1} g^{-k}(\mathbf{V})$, it can be an empty set.

\begin{figure}[h]
\begin{center}
\includegraphics[height=6cm]{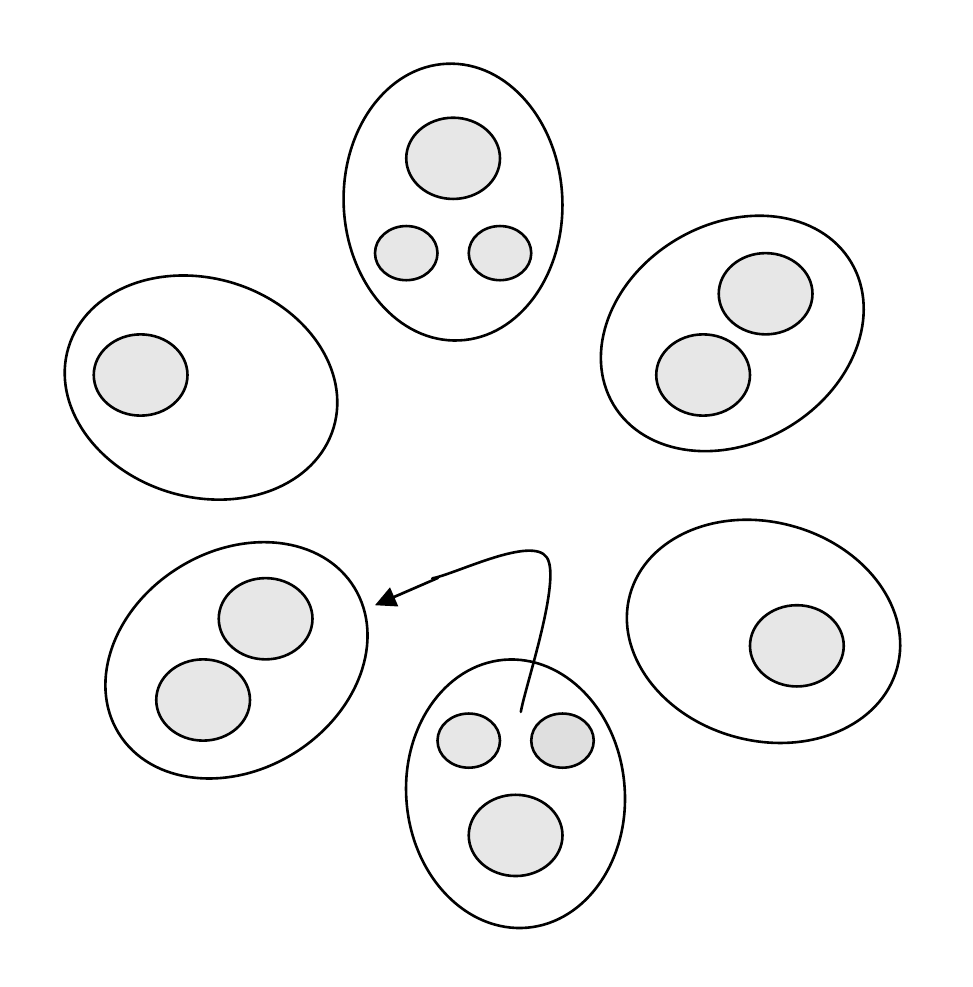}
\put(-80,80){$g$} \caption{A  repelling system
$g:\mathbf{U}\rightarrow \mathbf{V}$, where $\mathbf{U}$ is the union of
all shadow disks and  $\mathbf{V}$ is the union of six larger
disks.}
\end{center}\label{f5}
\end{figure}

\begin{thm}\label{pers} If the critical set $C_\lambda$  is persistently recurrent, then
there is a repelling system $g:\mathbf{U}\rightarrow \mathbf{V}$
such that

1. Each component of $\mathbf{U}$ and $\mathbf{V}$ is a puzzle
piece.

2. For each component $U_i$ of $\mathbf{U}$,
$g|_{U_i}=f_\lambda^{l_i}$ for some $l_i$.

3. $C_\lambda \subset K(g)$.

Moreover, the Lebesgue measure of
$J(f_\lambda)-\cup_{k\geq0}f_\lambda^{-k}(K(g))$ is zero.
\end{thm}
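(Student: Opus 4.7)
The proof splits into two independent tasks: (i) constructing the repelling system $g:\mathbf{U}\to\mathbf{V}$ from the Yoccoz puzzle of $f_\lambda$, and (ii) showing that points whose $f_\lambda$-orbit avoids $K(g)$ form a Lebesgue-null subset of $J(f_\lambda)$. For (i) I will use first return maps to a union of deep critical puzzle pieces, exploiting persistent recurrence to guarantee both that these returns happen in finite time and that the returned pieces sit compactly inside the target. For (ii) I will follow the standard Koebe distortion / Lebesgue density argument of Lyubich, using Lemma \ref{density} as the endgame.

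\textbf{Construction of $g$.} Under the hypothesis $\liminf\tau(d)=\infty$, choose $N$ so large that $\tau(d)\geq N+1$ for every $d\geq N$. Set $\mathbf{V}=\bigcup_{c\in C_\lambda}P_N^\lambda(c)$. Persistent recurrence implies that for every $c\in C_\lambda$ the forward orbit of $c$ returns to $\mathbf{V}$; let $l_c\geq 1$ be its first return time and $U_c=P_{N+l_c}^\lambda(c)$ the puzzle piece of depth $N+l_c$ at $c$, which is the component of $f_\lambda^{-l_c}(\mathbf{V})$ containing $c$. Since $N+l_c\geq N$, the definition of $\tau$ together with the choice of $N$ forces a nondegenerate annulus separating $U_c$ from $\partial P_N^\lambda(c)$, so $U_c\Subset P_N^\lambda(c)\subset\mathbf{V}$. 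Setting $\mathbf{U}=\bigsqcup_c U_c$ and $g|_{U_c}=f_\lambda^{l_c}$ realizes a repelling system satisfying (1) and (2). Persistent recurrence yields $g^k(c)\in\mathbf{V}$ for all $k\geq 0$ and all $c\in C_\lambda$, so $c\in K(g)$, proving (3).

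\textbf{Measure estimate.} Let $E=J(f_\lambda)\setminus\bigcup_{k\geq 0}f_\lambda^{-k}(K(g))$. A point $z\in E$ has a forward orbit that enters $\mathbf{U}$ only finitely many times (otherwise some $f_\lambda^{k}(z)$ would belong to $K(g)$); after a finite truncation I may assume the orbit never enters $\mathbf{U}$. Since every critical orbit lies in $K(g)\subset\mathbf{U}$, any round disk centered at $f_\lambda^n(z)$ of sufficiently small radius avoiding $\mathbf{U}$ pulls back univalently under $f_\lambda^n$ to a neighborhood of $z$ whose shape is controlled by Koebe. Combined with the shrinking-puzzle-piece principle from Lemma \ref{tableau}(2) applied along the orbit of $z$, this supplies the hypotheses of Lemma \ref{density}: infinitely many pullbacks, of uniformly bounded degree, of disks of a fixed definite size. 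Hence $z$ is not a Lebesgue density point of $J(f_\lambda)$, so $E$ has measure zero.

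\textbf{Main obstacle.} The delicate step is the measure argument, specifically verifying that along the orbit of a generic $z\in E$ there really are infinitely many moments $n$ at which one can center a round disk of definite size at $f_\lambda^n(z)$ avoiding $\mathbf{U}$. A priori the orbit might creep arbitrarily close to $\mathbf{U}$ without entering it, which would spoil the Koebe control. Persistent recurrence, which confines every critical orbit to $K(g)\subset\mathbf{U}$ and bounds the moduli of the annuli $P_N^\lambda(c)\setminus\overline{U_c}$ uniformly, is precisely what rules this out and provides the combinatorial input needed to extract the required pullbacks. Once that geometric step is in place, shape distortion and Lemma \ref{density} close the argument immediately.
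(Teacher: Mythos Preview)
Your construction of $\mathbf{U}$ does not do what you need. Taking only the \emph{first}-return pieces $U_c=P_{N+l_c}^\lambda(c)$ around the critical points gives a map $g$ whose domain is too small to contain the full $g$-orbit of $C_\lambda$. Concretely (using the symmetry so that all $l_c=l$), $g(c)=f_\lambda^{l}(c)$ lands in some $P_N^\lambda(c')\subset\mathbf V$, but it lies in $U_{c'}=P_{N+l}^\lambda(c')$ only if $f_\lambda^{2l}(c)\in\mathbf V$, i.e.\ only if the second return time of $C_\lambda$ to $\mathbf V$ equals $2l$. There is no reason for the return times $n_1<n_2<\cdots$ to be in arithmetic progression, so in general $g^2(c)$ is simply undefined and $C_\lambda\not\subset K(g)$. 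The paper's remedy is essential: one must take $\mathbf U=\bigcup_i\mathbf V_i$ where $\mathbf V_i$ is the pullback of $\mathbf V$ along the orbit segment from the $i$-th return to the $(i{+}1)$-st, and then use persistent recurrence to prove that only finitely many distinct $\mathbf V_i$ occur (this is exactly where $\liminf\tau(d)=\infty$ enters). Incidentally, your opening condition ``$\tau(d)\ge N+1$ for every $d\ge N$'' is vacuously impossible, since $\tau(N)\le N-1$.

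The measure step is also off track. Your assertion that a point $z\in E$ has its $f_\lambda$-orbit entering $\mathbf U$ only finitely often is not implied by $z\notin\bigcup_k f_\lambda^{-k}(K(g))$: the orbit may enter and exit $\mathbf U$ infinitely many times without any single entry giving rise to an infinite $g$-orbit. The paper avoids this difficulty entirely by arguing in the opposite direction. One invokes the general fact that for almost every $z\in J(f_\lambda)$ the orbit accumulates on $P(f_\lambda)$; since by construction $P(f_\lambda)\cap\mathbf V\subset\mathbf U$ (every critical return lands in some $\mathbf V_i$), for such $z$ there is $N$ so that $n\ge N$ and $f_\lambda^n(z)\in\mathbf V$ force $f_\lambda^n(z)\in\mathbf U$. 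Choosing any $p\ge N$ with $f_\lambda^{p}(z)\in\mathbf V$ then gives $g^j(f_\lambda^p(z))\in\mathbf V$ for all $j$, i.e.\ $f_\lambda^p(z)\in K(g)$. Thus almost every point is captured, and no Koebe/density argument for individual points of $E$ is needed here at all.
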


\begin{proof} Since the graph ${G}_\lambda(\theta_1,\cdots,\theta_N)$ is  admissible,
 we can find a non-degenerate critical annulus $P_d^\lambda(c)\setminus \overline{P^\lambda_{d+1}(c)}$ for some $d\geq 1$. Set $\mathbf{V}=\bigcup_{c\in
C_\lambda} P_{d+1}^\lambda(c)$. Then $f_\lambda^{j}(\partial \mathbf{V})\cap
\overline{\mathbf{V}}=\emptyset$ for all $j\geq 1$. For any $j\geq
1$, either $f_\lambda^{j}(C_\lambda)\subset \mathbf{V}$ or
$f_\lambda^{j}(C_\lambda)\cap \mathbf{V}=\emptyset$. Let $1\leq n_1<
n_2<\cdots$ be all the integers such that
$f_\lambda^{n_i}(C_\lambda)\subset \mathbf{V}$. Let
$l_i=n_{i+1}-n_{i} $ (set $n_0=0$) for $i\geq0$, we pull back
$\mathbf{V}$ along the orbit
$\{f_\lambda^{j}(C_\lambda)\}_{j=n_i}^{n_{i+1}}$ and get
$\mathbf{V}_i$. Namely, $\mathbf{V}_i$ is the union of all
components of $f_\lambda^{-l_i}(\mathbf{V})$ intersecting with
$f_\lambda^{n_i}(C_\lambda)$. For any $i$, the intermediate pieces
$f_\lambda^k(\mathbf{V}_i), 0<k<l_i$ lie outside $\mathbf{V}$ and
for any component  $V$ of $\mathbf{V}_i$, the map
$f_\lambda^{l_i}|_{V}$ is either univalent or a double covering.

Since $C_\lambda$ is persistently recurrent, the set $\{k;
\tau(k)\leq d+1\}$ is finite and  there are only finitely may
different $\mathbf{V}_i$'s. Moreover, if $\mathbf{V}_i\neq
\mathbf{V}_j$, then $\mathbf{V}_i\cap \mathbf{V}_j=\emptyset$ (In
fact, $\overline{\mathbf{V}}_i\cap
\overline{\mathbf{V}}_j=\emptyset$).

Let $\mathbf{U}=\bigcup_i \mathbf{V}_i$ and define
$g|_{\mathbf{V}_i}=f_\lambda^{l_i}$. Then $\mathbf{U}\Subset
\mathbf{V}$ follows from the fact that
$f_\lambda^{l_i}(\partial\mathbf{V}_i\cap
\partial\mathbf{V})\subset
\partial\mathbf{V} \cap
f_\lambda^{l_i}(\partial\mathbf{V})=\emptyset$.

It follows from  $\bigcup_{i\geq 0}
f_\lambda^{n_i}(C_\lambda)=\bigcup_{k\geq 0}g^k(C_\lambda)\subset
\mathbf{V}$ that $C_\lambda \subset K(g)$.

Similar to the proof of Proposition \ref{nonper}, we need only consider a point  $z\in
J(f_\lambda)$ with
$d_{\mathbb{\widehat{C}}}(f_\lambda^n(z),P(f_\lambda))\rightarrow 0$ as
$n\rightarrow\infty$.  For such point, there is an integer $N>0$ such that for
all $n\geq N$, $f_\lambda^n(z)\in \mathbf{V}$ implies
$f_\lambda^n(z)\in \mathbf{U}$. Note that there is  $p\geq N$ such
that $f_\lambda^{p}(z)\in \mathbf{V}$. Then for all $j\geq1$, we
have $g^j(f_\lambda^{p}(z))\in \mathbf{V}$. It turns out that
$f_\lambda^{p}(z)\in K(g)$. This implies
$J(f_\lambda)-\cup_{k\geq0}f_\lambda^{-k}(K(g))$ has zero Lebesgue
measure.
\end{proof}

Let $D\subset \mathbb{C}$ be a topological disk containing a compact
subset $K$ (not necessarily connected), the modulus of $A=D-K$,
denoted by $\mathbf{m}(A)$, is defined  to be the extremal length of
curves joining $\partial D$ and $\partial K$. It's equal to the
reciprocal of Dirichlet integral of
 the harmonic measure $u$ in $A$ (namely, $u$ is harmonic function in $A$ which tends to $0$ at regular points of $\partial K$
 and tends to $1$ at regular points of $\partial D$):
 $$\mathbf{m}(A)=\Big(\int_{A}|\nabla u|^2dxdy\Big)^{-1}.$$

If we require further that $K$ consists of finitely many components,
then we have the following area-modulus inequality (see \cite{L}):
$$area(D)\geq area(K)(1+4\pi \mathbf{m}(A)).$$

Now we  consider the repelling system $g:\mathbf{U}\rightarrow
\mathbf{V}$ defined in Theorem \ref{pers}. Set
$\mathbf{V}^0=\mathbf{V}$ and consider the preimages
$\mathbf{V}^d=g^{-d}(\mathbf{V})$ for $d\geq 1$. Note that
$\mathbf{V}^{d+1}\Subset \mathbf{V}^{d}$. For any $z\in K(g)$ and
$d\geq0$, denote by $\mathbf{V}^{d}(z)$  the piece of level $d$
containing $z$. Let
$\mathbf{A}^d(z)=\mathbf{V}^{d}(z)-\overline{\mathbf{V}^{d+1}}$, it
is a multiconnected domain. One can verify that for any $d\geq1$, if
$\mathbf{V}^{d}(z)$ contains no critical point in $C_\lambda$, then
$\mathbf{m}(\mathbf{A}^d(z))=\mathbf{m}(\mathbf{A}^{d-1}(f_\lambda(z)))$;
if $\mathbf{V}^{d}(z)$ contains a critical point in $C_\lambda$,
then
$2\mathbf{m}(\mathbf{A}^d(z))=\mathbf{m}(\mathbf{A}^{d-1}(f_\lambda(z)))$.

Using the same method as in \cite{L}, one can show that

\begin{lem}\label{Cantor} For any $z\in K(g)$, we have
$\sum_{d\geq1}\mathbf{m}(\mathbf{A}^d(z))=\infty$. It turns out that
$K(g)$ is a Cantor set.
\end{lem}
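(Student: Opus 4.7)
The plan is to first establish the modulus divergence $\sum_{d\ge 1}\mathbf{m}(\mathbf{A}^d(z))=\infty$ using the recursion together with persistent recurrence (after Lyubich \cite{L}), and then deduce that $K(g)$ is a Cantor set by a standard Gr\"otzsch-type shrinking argument.

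Iterating the given recursion, for any $z\in K(g)$ one obtains
\[
\mathbf{m}(\mathbf{A}^d(z))=\frac{\mathbf{m}(\mathbf{A}^0(g^d(z)))}{2^{M_d(z)}},
\]
where $M_d(z)$ counts the steps $0\le j<d$ at which the piece of the corresponding depth in the orbit of $z$ contains a critical point of $f_\lambda$. Since $\mathbf{V}^0=\mathbf{V}$ has only finitely many components and each component gives rise to a non-degenerate annulus $\mathbf{A}^0(\cdot)$, there is a uniform lower bound $\mathbf{m}(\mathbf{A}^0(w))\ge\mu_0>0$ for $w\in K(g)$. Hence it suffices to show that $\sum_d 2^{-M_d(z)}=\infty$, a purely combinatorial statement about how often the orbit of $z$ enters deep critical pieces.

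To prove this combinatorial divergence, I would adapt Lyubich's principal-nest argument from \cite{L}. Persistent recurrence ($\liminf_d \tau(d)=\infty$) means that return times of the critical orbit to the principal nest grow without bound, so that along a density-one subsequence of levels the critical orbit avoids the innermost pieces; combined with the Covering Lemma of \cite{L} this yields a bound $M_d(z)=O(\log d)$ on a positive-density set of levels, which forces the series to diverge. The rotational symmetry $z\mapsto e^{\pi i/n}z$ of $f_\lambda$ permutes the critical points and identifies their nests up to rotation, so Lyubich's single-critical-point analysis transfers almost verbatim. For $z\in K(g)$ that is not eventually critical, the orbit $g^j(z)$ must approach some $c\in C_\lambda$ through arbitrarily deep pieces of the principal nest, which transports the combinatorial bound from the critical case to the general point.

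Once the modulus sum diverges, the Cantor set conclusion is immediate. Each $\mathbf{V}^d(z)$ is simply connected, and since the multiconnected annulus $\mathbf{A}^d(z)$ has more inner boundary components than the corresponding round annulus $\mathbf{V}^d(z)\setminus\overline{\mathbf{V}^{d+1}(z)}$, one has $\mathbf{m}(\mathbf{V}^d(z)\setminus\overline{\mathbf{V}^{d+1}(z)})\ge\mathbf{m}(\mathbf{A}^d(z))$. Divergence of the sum therefore forces $\bigcap_d\mathbf{V}^d(z)=\{z\}$, so the connected component of $z$ in $K(g)$ reduces to $\{z\}$ and $K(g)$ is totally disconnected. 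Being compact, nonempty, totally disconnected, and perfect (repelling periodic points of $g$ are dense in $K(g)$), the set $K(g)$ is a Cantor set. The principal obstacle is the middle step: adapting the covering-lemma bound on $M_d(z)$ to the multi-critical McMullen setup. The high $e^{\pi i/n}$-symmetry of the family reduces this to the one-critical-point case treated by Lyubich, but the bookkeeping for the principal nest in the presence of several synchronized critical tableaux still requires care.
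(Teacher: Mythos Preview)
The paper itself gives no proof; it only says ``Using the same method as in \cite{L}, one can show that\ldots''. So your plan to follow Lyubich is exactly what the paper intends, and your reduction is correct: iterate the recursion to get $\mathbf{m}(\mathbf{A}^d(z))=2^{-M_d(z)}\mathbf{m}(\mathbf{A}^0(g^d(z)))\ge 2^{-M_d(z)}\mu_0$, then prove $\sum_d 2^{-M_d(z)}=\infty$. This is the skeleton of Lyubich's argument.

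However, your account of how Lyubich actually controls $M_d$ is not right, and this is the heart of the matter.
\begin{itemize}
\item There is no ``Covering Lemma'' in \cite{L}; you are presumably thinking of the much later Kahn--Lyubich Covering Lemma, which plays no role here.
\item The estimate $M_d(z)=O(\log d)$ on a positive-density set of levels is not what Lyubich proves, and it is not clear it is even true in general. Lyubich's argument is different: one analyses the critical tableau directly and shows, via the tableau rules, that in each ``cascade'' of levels between successive central returns the number of critical (or semi-critical) positions is uniformly bounded. Summing the resulting blocks yields the divergence; no logarithmic growth bound on $M_d$ is asserted or needed.
\item Your dichotomy for non-critical $z\in K(g)$ is backwards. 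It is false that every non-precritical $g$-orbit must approach $C_\lambda$ (repelling periodic orbits of $g$ do not). But that is precisely the \emph{easy} case: once some $\mathbf{V}^{d_0}(z)$ contains no critical point, no deeper $\mathbf{V}^d(z)$ does either, $M_d(z)$ stabilises, and the terms $\mathbf{m}(\mathbf{A}^d(z))$ are bounded below, so the series diverges trivially. The hard case is when the orbit keeps returning close to $C_\lambda$, which is then reduced to the critical tableau analysis.
\end{itemize}

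Your deduction that $K(g)$ is a Cantor set is fine: the monotonicity $\mathbf{m}(\mathbf{V}^d(z)\setminus\overline{\mathbf{V}^{d+1}(z)})\ge\mathbf{m}(\mathbf{A}^d(z))$ holds for the reason you give, and Gr\"otzsch then forces $\bigcap_d \mathbf{V}^d(z)=\{z\}$. (Alternatively, this shrinking already follows from Lemma~\ref{tableau}(2), since each $\mathbf{V}^d(z)$ is a Yoccoz puzzle piece for $f_\lambda$ of depth tending to infinity.) Your remark that the $e^{\pi i/n}$-symmetry reduces the multicritical bookkeeping to a single critical tableau is the right idea and is implicitly what the paper relies on.
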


Now we have

\begin{thm}\label{zerom} Let $g:\mathbf{U}\rightarrow \mathbf{V}$
be the repelling system defined in Theorem \ref{pers}, then the
Lebesgue measure of $K(g)$ is zero.
\end{thm}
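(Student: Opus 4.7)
The plan is to combine the area-modulus inequality stated just above the theorem with the divergence $\sum_{d\geq 1}\mathbf{m}(\mathbf{A}^d(z))=\infty$ from Lemma \ref{Cantor}, in the spirit of Lyubich \cite{L}. First I would iterate the area-modulus inequality along the central nest at every point $z\in K(g)$: taking $D=\mathbf{V}^d(z)$ and $K=\overline{\mathbf{V}^{d+1}(z)}$ (a single topological disk), one has $D\setminus K=\mathbf{A}^d(z)$, and the inequality reads
$$\text{area}(\mathbf{V}^{d+1}(z))\leq \frac{\text{area}(\mathbf{V}^d(z))}{1+4\pi\,\mathbf{m}(\mathbf{A}^d(z))}.$$
Iterating from depth $0$ and invoking Lemma \ref{Cantor}, the infinite product $\prod_e(1+4\pi\,\mathbf{m}(\mathbf{A}^e(z)))$ diverges, hence $\text{area}(\mathbf{V}^d(z))\to 0$ for every $z\in K(g)$.

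Next I would upgrade this to $\text{area}(K(g))=0$ via a Lebesgue density argument. Assume for contradiction $\text{area}(K(g))>0$ and fix a density point $z_0\in K(g)$. Since every $g$-branch on a component of $\mathbf{U}$ has degree at most $2$, Koebe distortion applied to the inverse branches of $g^d$ (with Koebe space supplied by the non-degenerate critical annuli, as in the shape distortion lemma of \cite{WQY}) shows that the puzzle pieces $\mathbf{V}^d(z_0)$ have uniformly bounded shape about $z_0$. Combined with $\text{diam}(\mathbf{V}^d(z_0))\to 0$, the density property transfers to puzzle pieces:
$$\rho_d:=\frac{\text{area}(K(g)\cap\mathbf{V}^d(z_0))}{\text{area}(\mathbf{V}^d(z_0))}\longrightarrow 1.$$

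To contradict $\rho_d\to 1$, I would run a multi-scale area-modulus argument. For each $N\geq 1$, let $\{Q_\alpha\}_\alpha$ be the finite collection of components of $\mathbf{V}^{d+N}$ contained in $\mathbf{V}^d(z_0)$. Then $K(g)\cap\mathbf{V}^d(z_0)\subset\bigsqcup_\alpha Q_\alpha$, and the multi-component area-modulus inequality gives
$$\rho_d\leq\frac{1}{1+4\pi\,M_{d,N}},\qquad M_{d,N}:=\mathbf{m}\Bigl(\mathbf{V}^d(z_0)\setminus\bigsqcup_\alpha\overline{Q_\alpha}\Bigr).$$
A Gr\"otzsch-type decomposition of the super-annulus $\mathbf{V}^d(z_0)\setminus\overline{\mathbf{V}^{d+N}(z_0)}$ through the central nest $\mathbf{A}^d(z_0),\ldots,\mathbf{A}^{d+N-1}(z_0)$ should produce a lower bound of the form $M_{d,N}\geq c\sum_{e=0}^{N-1}\mathbf{m}(\mathbf{A}^{d+e}(z_0))$ with $c>0$ independent of $d$ and $N$. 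By divergence of the series one then chooses $N=N(d)$ so that $M_{d,N(d)}\to\infty$, forcing $\rho_d\to 0$, a contradiction.

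The main obstacle is this last Gr\"otzsch-type lower bound on $M_{d,N}$. Although the central annuli $\mathbf{A}^{d+e}(z_0)$ are pairwise disjoint and nested, each one is punctured by sibling puzzle pieces at deeper levels, so the naive Gr\"otzsch inequality has to be adjusted. One must show that the punched-out siblings consume only a bounded fraction of the modulus of each annular layer; this is where the scaling property $2\mathbf{m}(\mathbf{A}^d(z))=\mathbf{m}(\mathbf{A}^{d-1}(f_\lambda(z)))$ (when $\mathbf{V}^d(z)$ contains a critical point) and the bounded-geometry machinery of Lyubich \cite{L} have to be carefully adapted to the multi-component repelling system $g:\mathbf{U}\to\mathbf{V}$.
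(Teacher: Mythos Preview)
You have misread the definition of $\mathbf{A}^d(z)$. In the paper, $\mathbf{A}^d(z)=\mathbf{V}^d(z)\setminus\overline{\mathbf{V}^{d+1}}$ removes \emph{all} level-$(d{+}1)$ pieces inside $\mathbf{V}^d(z)$, not just the central one $\mathbf{V}^{d+1}(z)$; the paper explicitly calls it a ``multiconnected domain''. So your identification ``$D\setminus K=\mathbf{A}^d(z)$'' with $K=\overline{\mathbf{V}^{d+1}(z)}$ is wrong, and Lemma~\ref{Cantor} does not directly give divergence of the moduli of the true annuli $\mathbf{V}^d(z)\setminus\overline{\mathbf{V}^{d+1}(z)}$ you are using in your first step.

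This misreading is exactly what forces you into the density argument and the unresolved Gr\"otzsch-type obstacle. Once $\mathbf{A}^d(z)$ is understood as a multi-annulus, the area--modulus inequality with $K=\overline{\mathbf{V}^{d+1}}\cap\mathbf{V}^d(z)$ controls the \emph{total} area of all sub-pieces:
\[
\mathrm{area}\bigl(\mathbf{V}^{d+1}\cap\mathbf{V}^d(z)\bigr)\ \le\ \frac{\mathrm{area}(\mathbf{V}^d(z))}{1+4\pi\,\mathbf{m}(\mathbf{A}^d(z))}.
\]
Summing over all level-$d$ pieces and iterating (a short induction on $d$ shows $\sum_i \mathrm{area}(\mathbf{V}^d(z_i))\prod_{j<d}(1+4\pi\,\mathbf{m}(\mathbf{A}^j(z_i)))\le\mathrm{area}(\mathbf{V})$) yields
\[
\mathrm{area}(\mathbf{V}^d)\ \le\ \frac{\mathrm{area}(\mathbf{V})}{1+4\pi M_d},\qquad M_d=\min_i\sum_{j<d}\mathbf{m}(\mathbf{A}^j(z_i)).
\]
Since each partial sum $z\mapsto\sum_{j<d}\mathbf{m}(\mathbf{A}^j(z))$ is locally constant on the compact set $K(g)$ and increases to $+\infty$ pointwise by Lemma~\ref{Cantor}, one has $M_d\to\infty$, hence $\mathrm{area}(\mathbf{V}^d)\to 0$ and $\mathrm{area}(K(g))=0$. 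No density argument, no shape control, and no super-additivity of multi-annulus moduli is needed.
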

\begin{proof} For any $d\geq1$, let $\mathbf{V}^d(z_1),\cdots,
\mathbf{V}^d(z_{k_d})$ be all puzzle pieces of level $d$, where
$z_1, \cdots,z_{k_d}\in K(g)$. We define
$$M_d= \min_{1\leq i\leq k_d} \sum_{0\leq j<
d}\mathbf{m}(\mathbf{A}^j(z_i)).$$ By Lemma \ref{Cantor}, we have
$M_d\rightarrow \infty$ as $d\rightarrow\infty$. By area-modulus
inequality, we have
$$area(\mathbf{V}^{d}) \leq \frac{area(\mathbf{V})}
{\min_{1\leq i\leq k_d}\prod_{0\leq j< d}
(1+4\pi\mathbf{m}(\mathbf{A}^j(z_i)))} \leq \frac{area(\mathbf{V})}
{1+4\pi M_d}.$$ This implies $area(\mathbf{V}^{d})\rightarrow0$ as
$d\rightarrow\infty$.
\end{proof}

Theorem \ref{lm2} then follows from  Proposition \ref{nonper} and Theorems \ref{pers} and \ref{zerom}.

\end{document}